\long\def\comment#1{}
\newtheorem{assumption}{Assumption}
\newtheorem{remark}{Remark}
\newtheorem{definition}{Definition}
\newtheorem{lemma}{Lemma}
\newtheorem{theorem}{Theorem}
\newtheorem{example}{Example}
\newtheorem{proposition}{Proposition}
\begin{document}

\setlength{\arraycolsep}{0.3em}

\title{Distributed Algorithms for Computing a Common Fixed Point of a Group of Nonexpansive Operators
\thanks{}}

\author{Xiuxian Li and Gang Feng
%\thanks{This work was supported by .}
\thanks{X. Li and G. Feng are with Department of Biomedical Engineering, City University of Hong Kong, Kowloon, Hong Kong (e-mail: xiuxiali@cityu.edu.hk; megfeng@cityu.edu.hk).}
%\thanks{L. Xie is with School of Electrical and Electronic Engineering, Nanyang Technological University, 50 Nanyang Avenue, Singapore 639798 (e-mail: elhxie@ntu.edu.sg).}
}

\maketitle

\setcounter{equation}{0}
\setcounter{figure}{0}
\setcounter{table}{0}
%%%%%%%%%%%%%%%%%%%%%%%%%%%%%%%%%%%%%%%%%%%%%%%%%%%%%%%%%%%%%%%%%%%%%%%%%%%%%%%%%%%%%%%%%

\begin{abstract}
This paper addresses the problem of seeking a common fixed point for a collection of nonexpansive operators over time-varying multi-agent networks in real Hilbert spaces, where each operator is only privately and approximately known to each individual agent, and all agents need to cooperate to solve this problem by propagating their own information to their neighbors through local communications over time-varying networks. To handle this problem, inspired by the centralized inexact Krasnosel'ski\u{\i}-Mann (IKM) iteration, we propose a distributed algorithm, called distributed inexact Krasnosel'ski\u{\i}-Mann (D-IKM) iteration. It is shown that the D-IKM iteration can converge weakly to a common fixed point of the family of nonexpansive operators. Moreover, under the assumption that all operators and their own fixed point sets are (boundedly) linearly regular, it is proved that the D-IKM iteration converges with a rate $O(1/k^{\ln(1/\xi)})$ for some constant $\xi\in(0,1)$, where $k$ is the iteration number. To reduce computational complexity and burden of storage and transmission, a scenario, where only a random part of coordinates for each agent is updated at each iteration, is further considered, and a corresponding algorithm, named distributed inexact block-coordinate Krasnosel'ski\u{\i}-Mann (D-IBKM) iteration, is developed. The algorithm is proved to be weakly convergent to a common fixed point of the group of considered operators, and, with the extra assumption of (bounded) linear regularity, it is convergent with a rate $O(1/k^{\ln(1/\xi)})$. Furthermore, it is shown that the convergence rate $O(1/k^{\ln(1/\xi)})$ can still be guaranteed under a more relaxed (bounded) power regularity condition.
\end{abstract}

\begin{IEEEkeywords}
Distributed algorithms, multi-agent networks, Krasnosel'ski\u{\i}-Mann iteration, nonexpansive operators, fixed point, optimization.
\end{IEEEkeywords}

\section{Introduction}\label{s1}

Fixed point theory in Hilbert spaces finds numerous applications in nonlinear numerical analysis and optimization \cite{bauschke2017convex,cegielski2012iterative}, which, roughly speaking, provides a unified mathematical framework for such kinds of problems. As such, a large volume of literature on the topic has emerged, including the investigation of fixed point theory itself and its applications \cite{eckstein1992douglas,attouch2010parallel,iiduka2016convergence,borwein2017convergence,haskell2017random,yi2017distributed,banjac2018tight,xu2018bregman}.

Although fruitful results have been reported on fixed point theory \cite{bauschke2017convex}, most of them are on centralized problems, that is, there is a global computing unit or coordinator who is aware of all the operators' information. Compared with centralized problems, distributed ones enjoy overwhelming advantages, such as, lower cost, higher robust to failures, and less storage, and so on \cite{ren2010distributed}. Along this line, recently, a distributed problem for finding a common fixed point of a group of paracontraction operators was studied in \cite{fullmer2018distributed,fullmer2016asynchronous}, which is motivated by a typical problem, that is, solving a linear algebraic equation in the Euclidean space in a distributed manner, where a multiple of agents hold private partial information on the linear equation and thus all agents need to cooperatively solve the problem through local communications \cite{mou2015distributed,wang2016distributed,wang2017further,wang2018distributed,alaviani2018distributed}. Meanwhile, the case with strongly quasi-nonexpansive operators was reported in \cite{liu2017distributed}. It is worthwhile to note that the aforesaid works have focused on the Euclidean space with exact knowledge of operators.

This paper aims to develop distributed algorithms for a collection of autonomous agents to seek a common fixed point of nonexpansive operators or mappings, which are privately held by individual agents, in real Hilbert spaces. Note that nonexpansive operators are more general than the operators considered in \cite{fullmer2018distributed,fullmer2016asynchronous,liu2017distributed}, and in fact they include the paracontraction operators and strongly quasi-nonexpansive operators as special cases. It is also noted that the nonexpansive operators include some celebrated operators, such as, projections, the proximal map, and the gradient descent map $x\mapsto x-\alpha\nabla f(x)$, where $f$ is a differentiable and convex function, $\nabla f$ is the gradient of $f$, being Lipschitz with constant $L$, and the constant $\alpha$ satisfies $0<\alpha<2/L$.

On the other hand, it is well known that the classical Krasnosel'ski\u{\i}-Mann (KM) iteration is a quintessential algorithm to find a fixed point for a nonexpansive operator \cite{cominetti2014rate,liang2016convergence,matsushita2017convergence,kanzow2017generalized,bravo2018sharp,bravo2018rates,shehu2018convergence}. Note that Picard iteration does not converge in general for a nonexpansive operator. The KM iteration is firstly proposed in \cite{mann1953mean,krasnosel1955two}, which have so far received tremendous attention \cite{cominetti2014rate,liang2016convergence,matsushita2017convergence,kanzow2017generalized,bravo2018sharp,bravo2018rates,shehu2018convergence}. Moreover, the KM iteration provides a unified framework for analysis of various algorithms, such as Proximal point algorithms (PPA) \cite{rockafellar1976monotone}, forward-backward splitting method (FBS) \cite{passty1979ergodic}, Peaceman-Rachford splitting (PRS) \cite{paeceman1955numerical}, Douglas-Rachford splitting (DRS) \cite{douglas1956numerical,lions1979splitting}, alternating direction method of multipliers (ADMM) \cite{gabay1975dual}, and a three-operator splitting \cite{davis2017three}. It is shown that the KM iteration converges weakly to a fixed point of a nonexpansive operator under mild conditions \cite{reich1979weak}.

With the above observations, this paper aims at developing distributed algorithms, by extending the KM iteration to the distributed scenario, for a family of autonomous agents to seek a common fixed point of a group of nonexpansive operators in real Hilbert spaces, where each operator is privately and approximately known by individual agent. In summary, the contributions of this paper can be summarized as follows.

\begin{enumerate}
  \item An algorithm, called distributed inexact Krasnosel'ski\u{\i}-Mann (D-IKM) iteration, is proposed, which, under some mild conditions, is shown to be weakly convergent to a common fixed point of the concerned nonexpansive operators. Moreover, a preliminary result on the convergence rate is provided, that is, there exists a subsequence of the sequence generated by the D-IKM iteration such that the subsequence converges at a rate $O(1/\sqrt{k})$, where $k>0$ is the iteration number. Compared with those most related works \cite{fullmer2018distributed,fullmer2016asynchronous,liu2017distributed}, all of which focus on Euclidean spaces with exact knowledge of operators and do not analyze the convergence rate, this paper considers more general spaces, i.e., real Hilbert spaces, with only approximate knowledge of operators, and also presents a result on the convergence speed.
  \item To reduce computational complexity and burden of storage and transmission, another algorithm, named distributed inexact block-coordinate Krasnosel'ski\u{\i}-Mann (D-IBKM) iteration, is developed, where only a part of coordinate is updated at each iteration for each agent. Under mild conditions, it is proved that the D-IBKM iteration converges weakly to a common fixed point of the considered operators and the similar convergence rate as in the case 1) can also be established.
  \item Under an assumption of the (bounded) linear regularity for all operators and their fixed point sets, a convergence of $O(1/k^{\ln(1/\xi)})$ for the two proposed algorithms can be established, where $\xi\in(0,1)$ is a constant and $k$ is the iteration index. Furthermore, it is shown that the same convergence rate can be maintained with a more relaxed assumption of (bounded) power regularity for the considered operators.
\end{enumerate}

The remainder of this paper is organized as follows. Section II provides some preliminary knowledge and the problem formulation, and the D-IKM iteration is developed in Section III along with its convergence rate. Subsequently, in Section IV, the D-IBKM iteration is presented along with its convergence results. The proofs of main results in last two sections are provided in Section V. Finally, Section VI concludes this paper and discusses the direction of future research.

\section{Preliminaries and Problem Statement}\label{s2}

This section provides some notations, preliminary concepts, and the problem formation.

{\em Notations:} Let $\mathcal{H}$ be a real Hilbert space with inner product $\langle\cdot,\cdot\rangle$ and associated norm $\|\cdot\|$. For an integer $n>0$, let $\mathbb{R}$, $\mathbb{R}^n$, $\mathbb{R}^{n\times n}$, and $\mathbb{N}$ represent the sets of real numbers, $n$-dimensional real vectors, $n\times n$ real matrices, and nonnegative integers, respectively. Let $[N]:=\{1,2,,\ldots,N\}$ be the index set with an integer $N>0$, and $col(z_1,\ldots,z_k)$ be the stacked column vector of $z_i\in\mathcal{H},i\in [k]$. Denote by $P_X(z)$ the projection of a point $z\in\mathcal{H}$ onto a closed and convex set $X\subset\mathcal{H}$, i.e., $P_X(z):=\mathop{\arg\min}_{x\in X}\|z-x\|$. Moreover, denote by $I$ the identity matrix of compatible dimension, $Id$ the identity operator or mapping, and $\otimes$ the Kronecker product. Let $d_X(y)$ be the distance from a point $y$ to the set $X$, i.e., $d_X(y):=\inf_{x\in X}\|y-x\|$. Let $\lfloor c\rfloor$ and $\lceil c\rceil$ be, respectively, the largest integer less than or equal to and the smallest integer greater than or equal to real number $c$. For an operator or mapping $M:\mathcal{H}\to\mathcal{H}$, denote by $Fix(M)$ the set of fixed points of $M$, i.e., $Fix(M):=\{x\in\mathcal{H}:M(x)=x\}$. Let $\rightharpoonup$ and $\to$ denote weak and strong convergence, respectively. The closed ball with center $x$ and radius $r$ is denoted by $B(x;r)$.

To proceed, let us review some fundamental concepts in operator theory \cite{bauschke2017convex}.

Let $S$ be a nonempty subset of $\mathcal{H}$, and let $T:S\to\mathcal{H}$ be an operator or mapping. Then $T$ is called {\em nonexpansive} if for all $x,y\in S$
\begin{align}
\|T(x)-T(y)\|\leq \|x-y\|,         \label{1}
\end{align}
called {\em $\alpha$-averaged} for $\alpha\in (0,1)$ if it can be written as
\begin{align}
T=(1-\alpha)Id+\alpha R,         \label{2}
\end{align}
for some nonexpansive operator $R$, called {\em firmly nonexpansive} if for all $x,y\in S$
\begin{align}
&\|T(x)-T(y)\|^2+\|(Id-T)(x)-(Id-T)(y)\|^2      \nonumber\\
&\leq \|x-y\|^2,         \label{3}
\end{align}
called {\em quasi-nonexpansive (QNE)} if for any $x\in S$ and any $y\in Fix(T)$
\begin{align}
\|T(x)-y\|\leq \|x-y\|,         \label{4}
\end{align}
and called {\em $\rho$-strongly quasi-nonexpansive ($\rho$-SQNE)} for $\rho>0$ if for all $x\in S$ and all $y\in Fix(T)$
\begin{align}
\|T(x)-y\|^2\leq \|x-y\|^2-\rho\|x-T(x)\|^2.         \label{5}
\end{align}
It is known that the set $Fix(T)$ is closed and convex if $T$ is QNE \cite{cegielski2015application}.

We are now ready to formulate the problem considered in this paper. Specifically, the goal is for a group of autonomous agents to find a common point $x$ in real Hilbert space $\mathcal{H}$ such that
\begin{align}
F_i(x)=x,~~~i\in[N]        \label{6}
\end{align}
where $F_i:\mathcal{H}\to\mathcal{H}$ is a nonexpansive operator for all $i\in[N]$. In this problem, no global coordinator, which can access all the information of $F_i$'s, is assumed to exist. Instead, $F_i$ is assumed to be approximately and locally accessible to agent $i$ in the sense that agent $i$ can receive the approximate information $F_i(x)+\epsilon_i$ for any point $x\in\mathcal{H}$, where $\epsilon_i\in\mathcal{H}$ is an error. This is more reasonable since the precise value of $F_i(x)$ is usually hard or expensive to obtain, for instance, the exact gradient of a function. The objective of this paper is to develop a distributed algorithm to solve the problem (\ref{6}) under the aforementioned scenario. One possible way to solve the problem is to generalize the classical centralized KM iteration to the distributed case. In doing so, it is helpful to briefly introduce the KM iteration.

For a nonexpansive operator $T$, a well-known method for finding a fixed point of $T$ is the so-called inexact KM iteration \cite{liang2016convergence,bravo2018rates}, that is,
\begin{align}
x_{k+1}=x_k+\alpha_k (T(x_k)+\epsilon_k-x_k),        \label{7}
\end{align}
where $\epsilon_k$ is the error of approximating $T(x_k)$, and $\{\alpha_k\}_{k\in\mathbb{N}}\in[0,1]$ is a sequence of relaxation parameters. When $\epsilon_k\equiv 0$ for all $k\in\mathbb{N}$, (\ref{7}) reduces to the classical KM iteration \cite{cominetti2014rate,matsushita2017convergence}. It has been shown that the (inexact) KM iteration converges weakly to a fixed point of $T$ under mild conditions \cite{reich1979weak,liang2016convergence,bravo2018rates}, for example, when $\sum_{j=1}^\infty \alpha_j(1-\alpha_j)=\infty$ for the KM iteration \cite{reich1979weak}.

Now, let us introduce the graph theory for describing the communication pattern among all agents \cite{ren2010distributed}. Specifically, the communication mode among $N$ agents can be modeled by a digraph $\mathcal{G}=(\mathcal{V},\mathcal{E})$, where $\mathcal{V}=[N]$ is the node or vertex set, and $\mathcal{E}\subset\mathcal{V}\times\mathcal{V}$ is the edge set. An {\em edge} $(i,j)\in\mathcal{E}$ means that agent $i$ is capable of transmitting its information to agent $j$, in which case agent $i$ is called a {\em neighbor} of agent $j$. A {\em directed path} from $i_1$ to $i_l$ is a sequence of edges of the form $(i_1,i_2),(i_3,i_4),\ldots,(i_{l-1},i_{l})$. A graph is called {\em strongly connected} if there exists at least a directed path from any node to any other node in this graph. In this paper, the communication graph for all agents is assumed to be time-varying, that is, any two agents can have different communication status at different time steps. In this case, the graph is denoted as $\mathcal{G}_k=(\mathcal{V},\mathcal{E}_k)$, where $k\in\mathbb{N}$ indicates the time index. The union of graphs $\mathcal{G}_l=(\mathcal{V},\mathcal{E}_l),l=1,\ldots,m$ is defined as $\cup_{l=1}^m\mathcal{G}_l=(\mathcal{V},\cup_{l=1}^m\mathcal{E}_l)$. At each time $k\in\mathbb{N}$, there exists an adjacency matrix $A_k=(a_{ij,k})\in\mathbb{R}^{N\times N}$ such that $a_{ij,k}>0$ if $(j,i)\in\mathcal{E}_k$, and $a_{ij,k}=0$ otherwise. Assume that $a_{ii,k}>0$ for all $i\in[N]$ and all $k\in\mathbb{N}$. For communication graphs, we have the following standard assumptions.

\begin{assumption}[Graph Connectivity and Weights Rule]\label{a1}
~
\begin{enumerate}
  \item The time-varying graphs $\mathcal{G}_k$ are uniformly jointly strongly connected, that is, there exists an integer $Q>0$ such that the graph union $\cup_{l=1}^Q \mathcal{G}_{k+l}$ is strongly connected for all $k\geq 0$.
  \item For all $k\in\mathbb{N}$, $A_k$ is row-stochastic, i.e., $\sum_{j=1}^N a_{ij,k}=1$ for all $i\in[N]$, and there exists a constant $\underline{a}\in(0,1)$ such that $a_{ij,k}>\underline{a}$ whenever $a_{ij,k}>0$.
\end{enumerate}

\end{assumption}

To end this section, it is convenient for us to list a useful lemma.

\begin{lemma}[\cite{xie2018distributed}]\label{l1}
Let Assumption \ref{a1} hold and define $A^{s:k}:=A_{s-1}\cdots A_k$ for $s\geq k$ with the convention $A^{k:k}=I$. Then, for any $k\geq 0$, there exists a vector $\pi_k=col(\pi_{1,k},\ldots,\pi_{N,k})\in\mathbb{R}^N$ such that $1_N^\top\pi_k=1$ and the following statements hold.
\begin{enumerate}
  \item $|a_{ij}^{s:k}-\pi_{j,k}|\leq \varpi\xi^{s-k}$ for all $s\geq k$ and $i,j\in[N]$, where $\varpi>0$ and $\xi\in(0,1)$ are some constants, and $a_{ij}^{s:k}$ is the $(i,j)$-th entry of $A^{s:k}$.
  \item There exists a constant $\underline{\pi}\geq \underline{a}^{Q(N-1)}$ such that $\pi_{l,k}\geq \underline{\pi}$ for all $k\geq 0$ and all $l\in[N]$.
  \item $\pi_k^\top=\pi_{k+1}^\top A_k$.
\end{enumerate}
\end{lemma}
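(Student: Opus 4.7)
The plan is to construct $\pi_k$ as the limit of the rows of the backward product $A^{s:k}$ as $s\to\infty$, with exponential convergence furnished by contraction of a coefficient of ergodicity. Concretely, I would define $\pi_k^\top := \lim_{s\to\infty} e_i^\top A^{s:k}$ for any canonical basis vector $e_i$, after first showing that this limit exists and is independent of $i$.

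First, I would bound the Dobrushin coefficient of ergodicity $\tau(A) := \frac{1}{2}\max_{i_1,i_2}\sum_j |a_{i_1 j}-a_{i_2 j}|$ for the products. Using Assumption \ref{a1}, a standard positive-path argument shows that over any window of length $B := Q(N-1)$, every entry of $A^{k+B:k}$ is bounded below by $\underline{a}^{B}$: strong connectivity of the graph union within every $Q$ time steps provides a directed path of at most $N-1$ hops between any two nodes, each hop occupying at most $Q$ time steps, and the positive diagonal entries $a_{ii,k}\geq\underline{a}$ allow self-loops to pad any shorter path out to exactly $B$ time steps. This yields $\tau(A^{k+B:k})\leq 1-N\underline{a}^B < 1$, and by the submultiplicativity $\tau(AB)\leq \tau(A)\tau(B)$ one obtains $\tau(A^{s:k})\leq \varpi\xi^{s-k}$ for some constants $\varpi>0$ and $\xi\in(0,1)$. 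Since $\tau$ upper bounds the maximum row discrepancy, all rows of $A^{s:k}$ approach a common probability vector $\pi_k$ at an exponential rate, which is exactly item (1).

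With $\pi_k$ defined in this way, the normalization $1_N^\top\pi_k=1$ is inherited from the row-stochasticity of each $A^{s:k}$. Item (3) then follows at once from the identity $A^{s:k} = A^{s:k+1}A_k$ by letting $s\to\infty$ and using continuity of matrix multiplication. For item (2), the same uniform entry-wise bound $a^{s:k}_{ij}\geq\underline{a}^B$ for $s\geq k+B$ survives the passage to the limit, giving $\pi_{j,k}\geq\underline{a}^{Q(N-1)}=:\underline{\pi}$ uniformly in $k$ and $j$.

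The main technical obstacle is the uniform-in-$k$ positive-path bound: one needs a single integer $B$ and a single constant $\underline{a}^B$ such that $a^{k+B:k}_{ij}$ is bounded below for \emph{every} $k$ and every pair $(i,j)$. The hypotheses of Assumption \ref{a1} are precisely tailored to this purpose, the uniform joint strong connectivity supplying the length $Q$ and the uniform lower bound $\underline{a}$ on nonzero weights (combined with positive self-loops) supplying the entry-wise estimate, so once these structural facts are correctly combined the remainder is essentially a mechanical application of the standard ergodic-coefficient machinery.
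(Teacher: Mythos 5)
Your proposal is correct and follows essentially the standard route: the paper does not prove Lemma \ref{l1} itself but imports it from \cite{xie2018distributed}, and your construction of $\pi_k$ as the common limit of the rows of the backward products $A^{s:k}$, with geometric contraction supplied by the uniform positive-entry bound over windows of length $Q(N-1)$ and submultiplicativity of the Dobrushin coefficient, is the argument underlying that reference. One small point to make explicit: your path-counting bound gives $a^{s:k}_{ij}\geq\underline{a}^{Q(N-1)}$ only at $s=k+Q(N-1)$ (for larger $s$ the naive padding would only yield $\underline{a}^{s-k}$, which decays), so to get the bound for \emph{all} $s\geq k+Q(N-1)$ and hence for the limit $\pi_{j,k}$, write $A^{s:k}=A^{s:k+Q(N-1)}A^{k+Q(N-1):k}$ and use that left multiplication by a row-stochastic matrix forms convex combinations within each column; the same convex-combination observation, applied to $A^{s':k}=A^{s':s}A^{s:k}$ together with the $O(\xi^{s-k})$ row discrepancy, is what makes the rows a Cauchy sequence and justifies the existence of the limit defining $\pi_k$. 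With these one-line additions, items (1)--(3) follow exactly as you describe.
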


\section{The D-IKM Iteration}\label{s3}

This section aims to solve problem (\ref{6}) by developing a distributed algorithm, called distributed inexact KM (D-IKM) iteration.

Motivated by the inexact KM iteration given in (\ref{7}), the D-IKM iteration is proposed as follows
\begin{align}
x_{i,k+1}=\hat{x}_{i,k}+\alpha_{i,k}(F_i(\hat{x}_{i,k})+\epsilon_{i,k}-\hat{x}_{i,k}),~~i\in[N]          \label{8}
\end{align}
where
\begin{align}
\hat{x}_{i,k}:=\sum_{j=1}^N a_{ij,k}x_{j,k}         \label{9}
\end{align}
represents the aggregate information received from its neighbors at time step $k$, $x_{i,k}$ is an estimate of a common fixed point of $F_i$'s by agent $i$ at time instant $k\geq 0$, $\epsilon_{i,k}$ is an error of approximating $F_i(\hat{x}_{i,k})$ by agent $i$, and $\{\alpha_{i,k}\}_{k\in\mathbb{N}}$ is a sequence of relaxation parameters for agent $i$, which is assumed to satisfy
\begin{align}
\alpha_{i,k}\in [\alpha,1-\alpha]        \label{10}
\end{align}
for some constant $\alpha\in(0,1/2]$ and for all $i\in[N], k\in\mathbb{N}$.

For the ease of exposition, let us denote by $\ell_+^1$ the set of summable sequences in $[0,+\infty)$, $X_i:=Fix(F_i)$ the set of fixed points of $F_i$, $X^*:=\cap_{i=1}^NX_i$ the set of common fixed points of all $F_i$'s which is assumed to be nonempty, and
\begin{align}
M_{i,k}:=(1-\alpha_{i,k})Id+\alpha_{i,k}F_i,~~\forall i\in[N],~k\in\mathbb{N}.         \label{11}
\end{align}

We are now ready to present the first main result as follows.

\begin{theorem}\label{t1}
For the D-IKM iteration (\ref{8}) with $\{\|\epsilon_{i,k}\|\}_{k\in\mathbb{N}}\in\ell_+^1$ for all $i\in[N]$, under Assumption \ref{a1}, the following two statements hold:
\begin{enumerate}
  \item All $x_{i,k}$'s are bounded and converge weakly to a common point in $X^*$; and
  \item There exists a subsequence $\{k_l\}_{l=1}^\infty\subset \mathbb{N}$, such that
  \begin{align}
  \|F_i(x_{i,k_l})-x_{i,k_l}\|=O(\frac{1}{\sqrt{k_l}}),~~\forall i\in[N].        \label{12}
  \end{align}
\end{enumerate}
\end{theorem}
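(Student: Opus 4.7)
The plan is to build a perturbed Fej\'er-type analysis based on the aggregated Lyapunov function
\[V_k(x^*) := \sum_{i=1}^N \pi_{i,k}\|x_{i,k}-x^*\|^2\]
for an arbitrary $x^* \in X^*$, where $\pi_k$ is supplied by Lemma~\ref{l1}. The weights $\pi_{i,k}$ are tailored so that the mixing step telescopes: by Lemma~\ref{l1}(3) one has $\pi_{k+1}^\top A_k = \pi_k^\top$.

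First I rewrite (\ref{8}) as $x_{i,k+1} = M_{i,k}(\hat{x}_{i,k}) + \alpha_{i,k}\epsilon_{i,k}$ and exploit that $x^* \in Fix(M_{i,k})$ together with the standard $\alpha_{i,k}$-averaged inequality
\[\|M_{i,k}(\hat{x}_{i,k})-x^*\|^2 \leq \|\hat{x}_{i,k}-x^*\|^2 - \alpha_{i,k}(1-\alpha_{i,k})\|F_i(\hat{x}_{i,k})-\hat{x}_{i,k}\|^2.\]
Combining this with the Jensen bound $\|\hat{x}_{i,k}-x^*\|^2 \leq \sum_j a_{ij,k}\|x_{j,k}-x^*\|^2$ from row-stochasticity of $A_k$, taking the $\pi_{i,k+1}$-weighted sum, and using Lemma~\ref{l1}(3) to collapse the mixing term, I obtain, after absorbing the cross and quadratic $\epsilon_{i,k}$ contributions,
\[V_{k+1}(x^*) \leq V_k(x^*) - \sum_i \pi_{i,k+1}\alpha_{i,k}(1-\alpha_{i,k})\|F_i(\hat{x}_{i,k})-\hat{x}_{i,k}\|^2 + e_k,\]
with $e_k \geq 0$ summable. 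Summability of $e_k$ follows by first running the first-moment analogue (with $\|\cdot\|$ in place of $\|\cdot\|^2$) to get an a~priori bound on $\|x_{i,k}\|$ from $\{\|\epsilon_{i,k}\|\} \in \ell_+^1$. A Robbins--Siegmund-type lemma then delivers (i) convergence of $V_k(x^*)$ for every $x^* \in X^*$; and (ii) $\sum_k \sum_i \|F_i(\hat{x}_{i,k})-\hat{x}_{i,k}\|^2 < \infty$, in particular $\|F_i(\hat{x}_{i,k})-\hat{x}_{i,k}\| \to 0$ for every $i$ (using (\ref{10}) and Lemma~\ref{l1}(2) to bound the coefficient of $\|F_i(\hat{x}_{i,k})-\hat{x}_{i,k}\|^2$ from below by a positive constant).

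Next I establish consensus by viewing the iteration as the mixing dynamic $x_{i,k+1} = \hat{x}_{i,k} + y_{i,k}$ with perturbation $y_{i,k} := \alpha_{i,k}(F_i(\hat{x}_{i,k})-\hat{x}_{i,k}) + \alpha_{i,k}\epsilon_{i,k}$. Step~(ii) together with summability of $\|\epsilon_{i,k}\|^2$ makes $\{\|y_{i,k}\|^2\}$ summable; combining this with the geometric estimate $|a_{ij}^{s:k}-\pi_{j,k}|\leq \varpi\xi^{s-k}$ from Lemma~\ref{l1}(1) in a standard input-to-state / Young convolution bound yields $\sum_k \sum_{i,j}\|x_{i,k}-x_{j,k}\|^2 < \infty$, and in particular $\|x_{i,k}-\hat{x}_{i,k}\|\to 0$. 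To conclude part~(1) I apply Opial's lemma: any weak cluster point $\bar{x}$ of $\{x_{i,k}\}$ is, by consensus, also a weak cluster point of $\{\hat{x}_{j,k}\}$ for every $j$, and the demiclosedness of $Id-F_j$ at $0$ combined with $F_j(\hat{x}_{j,k})-\hat{x}_{j,k}\to 0$ forces $\bar{x} \in X_j$ for all $j$, hence $\bar{x} \in X^*$. Since (i) is available for every point of $X^*$, Opial's lemma yields weak convergence of each $x_{i,k}$ to a common $x^\infty \in X^*$.

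For part~(2) I set $g_{i,k}^2 := \|F_i(\hat{x}_{i,k})-\hat{x}_{i,k}\|^2 + \sum_j\|x_{j,k}-x_{i,k}\|^2$, which is summable in $k$ by the preceding two bounds, so $\min_{1\leq k \leq K} g_{i,k} = O(1/\sqrt{K})$. This furnishes a common subsequence $\{k_l\}$ along which both $\|F_i(\hat{x}_{i,k_l})-\hat{x}_{i,k_l}\|$ and $\|x_{i,k_l}-\hat{x}_{i,k_l}\|$ are $O(1/\sqrt{k_l})$; nonexpansiveness of $F_i$ and the triangle inequality then give
\[\|F_i(x_{i,k_l})-x_{i,k_l}\| \leq 2\|x_{i,k_l}-\hat{x}_{i,k_l}\| + \|F_i(\hat{x}_{i,k_l})-\hat{x}_{i,k_l}\| = O(1/\sqrt{k_l}).\]
I expect the main obstacle to be packaging the time-varying consensus estimates of Lemma~\ref{l1}(1) into an $\ell^2\to\ell^2$ convolution bound under only uniform joint (rather than instantaneous) strong connectivity in the Hilbert-space setting, while carrying the $\alpha_{i,k}\epsilon_{i,k}$ perturbations through every line without destroying the Fej\'er structure of the Lyapunov inequality.
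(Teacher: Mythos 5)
Your proof of part (1) is correct and is essentially the paper's argument: the same weighted Lyapunov function $\sum_i\pi_{i,k}\|x_{i,k}-x^*\|^2$ with the telescoping property $\pi_k^\top=\pi_{k+1}^\top A_k$, boundedness from the first-moment recursion, square-summability of the residuals $\|F_i(\hat{x}_{i,k})-\hat{x}_{i,k}\|$ from the second-moment recursion with the averaged-operator inequality (the paper gets the same bound via Lemma \ref{l4}), consensus from the perturbed mixing dynamics and the geometric estimate of Lemma \ref{l1}, and finally demiclosedness of $Id-F_i$ plus an Opial-type argument (Corollary 4.28 and Lemma 2.47 of \cite{bauschke2017convex}), with the small bridging step that convergence of the weighted quantity plus consensus gives convergence of $\|\bar{x}_k-x^*\|$. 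Where you genuinely diverge is part (2). The paper keeps only the qualitative consensus statement $\|x_{i,k}-\bar{x}_k\|\to 0$, transfers the residual from $\hat{x}_{i,k}$ to $x_{i,k+1}$ through Lemma \ref{l5}, and then argues by contradiction: a uniform lower bound $C/\sqrt{k}$ would force $\sum_k\sum_i\|F_i(\hat{x}_{i,k})-\hat{x}_{i,k}\|^2=\infty$. You instead upgrade consensus to $\ell^2$-summability of the disagreements via Young's convolution of the geometric kernel from Lemma \ref{l1}(1) with the square-summable perturbations (legitimate, since $\ell_+^1\subset\ell^2$ for the errors), and then extract the subsequence directly from the elementary fact that a summable nonnegative sequence $\{a_k\}$ satisfies $\liminf_k k\,a_k=0$; the triangle/nonexpansiveness bound $\|F_i(x_{i,k})-x_{i,k}\|\le 2\|x_{i,k}-\hat{x}_{i,k}\|+\|F_i(\hat{x}_{i,k})-\hat{x}_{i,k}\|$ finishes. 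Your route buys a stronger intermediate estimate (summable squared disagreement, which the paper never establishes) and a direct, contradiction-free extraction of the rate; the paper's route avoids the convolution bookkeeping by working through Lemma \ref{l5}. One small repair: to get a \emph{single} subsequence valid for all $i\in[N]$ simultaneously, apply the $\liminf$ argument to the aggregated summable sequence $\sum_{i=1}^N g_{i,k}^2$ rather than to each $g_{i,k}$ separately.
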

\begin{proof}
The proof is given in Section \ref{s5.1}.
\end{proof}

\begin{remark}\label{r1}
It is worth pointing out that it is in general standard to leverage $\|T(x)-x\|$ as a measure of the convergence speed for the centralized (inexact) KM iteration, since $\|T(x)-x\|=0$ amounts to $T(x)=x$, see \cite{bravo2018sharp,bravo2018rates,cominetti2014rate,kanzow2017generalized,liang2016convergence,matsushita2017convergence}. This is why $\|F_i(x_{i,k_l})-x_{i,k_l}\|$ is employed for measuring the convergence rate of the D-IKM iteration, as shown in (\ref{12}). However, it is noted that the result in (\ref{12}) is described by a subsequence $\{k_l\}_{l=1}^\infty$ instead of $\{k\}_{k=1}^\infty$, since the D-IKM iteration involves communications over a multi-agent network unlike the case of the centralized KM iteration. It is still open whether one can obtain the result $\|F_i(x_{i,k})-x_{i,k}\|=O(1/\sqrt{k})$ as in the case of the centralized KM iteration \cite{matsushita2017convergence}.
\end{remark}

%\begin{remark}\label{r2}
%Of most related works are \cite{fullmer2018distributed,fullmer2016asynchronous,liu2017distributed}, all of which focus on Euclidean spaces with exact knowledge of operators, and no results on the convergence rate are reported. In comparison, this paper works on more general spaces and operators with a more general situation, i.e., real Hilbert spaces with inexact knowledge of nonexpansive operators, and the convergence speed is analyzed as well.
%\end{remark}

In what follows, the convergence rate of the D-IKM iteration is further discussed under some extra assumptions. It was shown in \cite{bauschke2015linear,banjac2018tight} that the centralized KM iteration is linearly convergent under the (bounded) linear regularity assumption, which is referred to as a sufficient condition for the linear convergence of averaged nonexpansive operators. It is thus natural for us to ask if the linear convergence can still be maintained for the distributed case, i.e., the D-IKM iteration, under the same assumption. To proceed, let us first review the concept of (bounded) linear regularity.

\begin{definition}[\cite{bauschke2015linear}]\label{d1}
Let $\mathcal{D}$ be a nonempty subset of $\mathcal{H}$, $T:\mathcal{D}\to\mathcal{H}$ be an operator with $Fix(T)\neq \emptyset$, and $\{S_i\}_{i\in I}$ be a finite collection of closed convex subsets of $\mathcal{H}$ with $S:=\cap_{i=I} S_i\neq \emptyset$, where $I$ is a finite index set. It is said that:
\begin{enumerate}
  \item $T$ is {\em linearly regular} with constant $\kappa\geq 0$ if for all $x\in\mathcal{D}$
  \begin{align}
  d_{Fix(T)}(x)\leq \kappa\|x-T(x)\|.           \label{13}
  \end{align}
  \item $T$ is {\em boundedly linearly regular} if for any bounded set $\Theta\subset\mathcal{D}$, there exists $\kappa\geq 0$ such that for all $x\in \Theta$
  \begin{align}
  d_{Fix(T)}(x)\leq \kappa\|x-T(x)\|.           \label{14}
  \end{align}
  \item $\{S_i\}_{i\in I}$ is {\em linearly regular} with constant $\mu>0$ if $d_S(x)\leq \mu\max_{i\in I}d_{S_i}(x)$ for all $x\in\mathcal{D}$.
  \item $\{S_i\}_{i\in I}$ is {\em boundedly linearly regular} if for any bounded set $\Theta\subset\mathcal{D}$, there exists $\mu>0$ such that $d_S(x)\leq \mu\max_{i\in I}d_{S_i}(x)$ for all $x\in \Theta$.
\end{enumerate}
\end{definition}

One example for linearly regular operators is the projection operator $P_C$ on a closed convex set $C\subset\mathcal{H}$, as it is easy to verify that $d_{Fix(P_C)}(x)=d_C(x)= \|x-P_C(x)\|$. The above notions have been thoroughly investigated in \cite{bauschke2015linear,banjac2018tight}. For instance, suppose that $I=[m]$, then $\{S_i\}_{i\in I}$ is boundedly linearly regular if $S_m\cap int(S_1\cap\cdots\cap S_{m-1})\neq \emptyset$, where $int(C)$ denotes the set of interior points of set $C$. Please refer to \cite{bauschke2015linear,banjac2018tight} for more details and \cite{dontchev2009implicit} for another relevant notion, i.e., metric (sub-)regularity for set-valued mappings.

To proceed, the assumption of the bounded linear regularity is explicitly given below.

\begin{assumption}\label{a2}
$F_i$ is boundedly linearly regular for each $i\in[N]$, and the sets $\{Fix(F_i)\}_{i\in[N]}$ are boundedly linearly regular.
\end{assumption}

In view of Theorem \ref{t1}, it is known that all $x_{i,k}$'s are bounded, say $\|x_{i,k}\|\leq \chi$ for a constant $\chi>0$ and for all $i\in[N],k\in\mathbb{N}$, which leads to that there exist constants $\kappa_0\geq 0$ and $\kappa_i\geq 0,i\in[N]$ such that for all $y\in B(0;\chi)\subset\mathcal{H}$
\begin{align}
d_{X_i}(y)&\leq \kappa_i\|F_i(y)-y\|,~~\forall i\in[N]        \label{15}\\
d_{X^*}(y)&\leq \kappa_0\max_{i\in[N]}d_{X_i}(y),          \label{16}
\end{align}
if Assumption \ref{a2} holds.

With the above preparations, we are now in a position to give the result on the D-IKM iteration's stronger convergence.

\begin{theorem}\label{t2}
Under Assumptions \ref{a1} and \ref{a2}, all $x_{i,k}$'s in the D-IKM iteration (\ref{8}) converge strongly to a common point in $X^*$, if there holds
\begin{align}
\alpha_c<\min\Big\{\frac{1}{2\kappa_c\kappa_0}\sqrt{\frac{\underline{\pi}}{2N\gamma_2}},1-\alpha\Big\},      \label{17}
\end{align}
where
\begin{align}
\gamma_2:=\frac{24N^3\varpi^2\xi^2}{(1-\xi)^2}\Big(2+\frac{1}{4N\kappa_c^2\kappa_0^2}\Big),      \label{18}
\end{align}
$\kappa_c:=\max_{i\in[N]}\kappa_i$, and $\alpha_c:=\max_{i\in[N],k\in\mathbb{N}}\alpha_{i,k}$. Moreover, in the absence of the approximate errors (called D-KM iteration for (\ref{8}) in this case), i.e., $\epsilon_{i,k}\equiv 0$ for all $i\in[N]$ and $k\in\mathbb{N}$, all $x_{i,k}$'s converge to a common point in $X^*$ at a rate of $O(1/k^{\ln(1/\xi)})$ under condition (\ref{17}), where $\xi$ is given in Lemma \ref{l1}.
\end{theorem}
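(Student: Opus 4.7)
The plan is to combine the descent provided by the $\alpha$-averaged structure of $M_{i,k}=(1-\alpha_{i,k})Id+\alpha_{i,k}F_i$ with the bounded-linear-regularity inequalities (\ref{15})--(\ref{16}) (activated by the boundedness of $\{x_{i,k}\}$ established in Theorem \ref{t1}), while absorbing the consensus mismatch through the Perron-vector estimates of Lemma \ref{l1}. Concretely, for a fixed $z\in X^*$ the averaged-operator identity applied to each $M_{i,k}(\hat x_{i,k})$ yields
\[
\|x_{i,k+1}-z\|^2 \leq \|\hat x_{i,k}-z\|^2 - \alpha_{i,k}(1-\alpha_{i,k})\|F_i(\hat x_{i,k})-\hat x_{i,k}\|^2 + e_{i,k},
\]
where $e_{i,k}$ is linear in $\epsilon_{i,k}$. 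Weighting by $\pi_{i,k+1}$, applying Jensen's inequality to $\hat x_{i,k}=\sum_j a_{ij,k}x_{j,k}$, and using the left-eigenvector identity $\pi_{k+1}^\top A_k=\pi_k^\top$ from Lemma \ref{l1}(3), I obtain a Lyapunov recursion of the form
\[
V_{k+1} \leq V_k - \underline{\pi}\,\alpha(1-\alpha)\sum_{i=1}^N \|F_i(\hat x_{i,k})-\hat x_{i,k}\|^2 + E_k,
\]
for $V_k:=\sum_i \pi_{i,k}\|x_{i,k}-z\|^2$, with $\{E_k\}\in\ell^1_+$ by hypothesis on the errors.

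Next, I would convert the residual $\|F_i(\hat x_{i,k})-\hat x_{i,k}\|$ into a genuine distance to $X^*$ via the bounded linear regularity: (\ref{15}) gives $d_{X_i}(\hat x_{i,k})\leq \kappa_c\|F_i(\hat x_{i,k})-\hat x_{i,k}\|$, and (\ref{16}) combined with the trivial bound $\max_i a_i \geq \tfrac1N\sum_i a_i$ yields $d_{X^*}(\hat x_{i,k})^2\leq N\kappa_c^2\kappa_0^2\sum_i\|F_i(\hat x_{i,k})-\hat x_{i,k}\|^2$. Choosing $z=P_{X^*}(\bar x_k)$ for $\bar x_k:=\sum_i\pi_{i,k}x_{i,k}$, one can relate $V_k$ to $d_{X^*}(\bar x_k)^2$ modulo the consensus gap $\sum_i\pi_{i,k}\|x_{i,k}-\bar x_k\|^2$. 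Here Lemma \ref{l1}(1) is decisive: the geometric bound $|a_{ij}^{s:k}-\pi_{j,k}|\leq \varpi\xi^{s-k}$ telescopes backwards in $k$ to deliver
\[
\sum_{i=1}^N\|x_{i,k}-\bar x_k\|^2 \leq \frac{\alpha_c^2\gamma_2}{\underline{\pi}}\,\max_{j\leq k}\sum_{i=1}^N\|F_i(\hat x_{i,j})-\hat x_{i,j}\|^2
\]
with exactly the constant $\gamma_2$ in (\ref{18}). The threshold (\ref{17}) is then tailored so that, after a Young-inequality split, the descent term strictly dominates the consensus contribution, yielding $V_{k+1}\leq(1-\eta)V_k+E_k$ with $\eta>0$; a Polyak-type lemma gives the strong convergence statement.

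For the error-free case $\epsilon_{i,k}\equiv 0$, the recursion becomes a strict geometric contraction $V_{k+1}\leq(1-\eta)V_k$, and the desired polynomial rate $O(1/k^{\ln(1/\xi)})$ must be extracted by reconciling this contraction of $V_k$ with the history-dependent consensus bound. The exponent $\ln(1/\xi)$ arises naturally because the effective mixing window behind (\ref{18}) scales like $\ln(\cdot)/\ln(1/\xi)$ iterations: balancing the geometric mixing rate against the contraction translates $\xi^{\ln k}=1/k^{\ln(1/\xi)}$ into the stated exponent. The main obstacle will be precisely this reconciliation, because the consensus-gap estimate is controlled not by the current residual but by the running maximum of past residuals, so a naive substitution breaks the one-step contraction. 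I expect to handle it by blocking iterations in windows of length $\sim Q$ from Assumption \ref{a1}(1), propagating the averaged one-step descent across each block, and using summation-by-parts together with the geometric decay $\varpi\xi^{s-k}$ to transfer the block-wise contraction in the index $k/Q$ into the stated polynomial rate in $k$.
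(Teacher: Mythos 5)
There are two genuine gaps. The first is in your use of Assumption \ref{a2}: the bounds (\ref{15})--(\ref{16}) require all residuals and distances to be evaluated at \emph{one and the same} point $y$, whereas your descent term only supplies $\|F_i(\hat x_{i,k})-\hat x_{i,k}\|$ with a different point $\hat x_{i,k}$ for each $i$. Consequently your displayed inequality $d_{X^*}(\hat x_{i,k})^2\leq N\kappa_c^2\kappa_0^2\sum_i\|F_i(\hat x_{i,k})-\hat x_{i,k}\|^2$ does not follow from the assumption (it would need $\|F_j(\hat x_{i,k})-\hat x_{i,k}\|$ for all $j$ at the single point $\hat x_{i,k}$). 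The paper repairs exactly this mismatch by first transferring each residual to the common point $\bar x_{k+1}$ at the cost of consensus terms, see (\ref{t2.9}), applying regularity at $\bar x_{k+1}$ as in (\ref{t2.11}), and translating the distance back to the agents via (\ref{t2.12})--(\ref{t2.13}). Relatedly, your comparison point $z=P_{X^*}(\bar x_k)$ varies with $k$, so a Polyak/quasi-Fej\'er lemma for a fixed $z$ cannot be invoked as stated; the paper avoids this by working directly with the weighted distance $d_k^2=\sum_i\pi_{i,k}d_{X^*}^2(x_{i,k})$, using the $X^*$-valued proxy $p_{i,k}=\sum_j a_{ij,k}P_{X^*}(x_{j,k})$ in (\ref{t2.3})--(\ref{t2.5}).

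The second gap is the closing of the recursion and the rate. Your consensus estimate with $\max_{j\leq k}$ over the \emph{entire} history is too lossy (early residuals need not be small), and, as you yourself note, it destroys the one-step contraction; but your proposed repair---blocking in windows of fixed length $\sim Q$ and summation by parts---is not the mechanism behind the stated rate: no inequality of the form $V_{k+1}\leq(1-\eta)V_k+E_k$ is ever established, and a fixed-length blocking argument would, if it worked, yield a geometric rate rather than $O(1/k^{\ln(1/\xi)})$. What actually closes the loop in the paper is: (i) the perturbation driving the disagreement dynamics is bounded by the distance itself, $\|F_i(\hat x_{i,l})-\hat x_{i,l}\|^2\leq 4\sum_j a_{ij,l}d_{X^*}^2(x_{j,l})$ using $F_i(p_{i,l})=p_{i,l}$ and nonexpansiveness ((\ref{t2.19})--(\ref{t2.22})), so that $\|\bar\varepsilon_l\|^2\lesssim d_l^2$; (ii) the geometric weights of Lemma \ref{l1} confine the relevant perturbations to the half-window $\lfloor(k+1)/2\rfloor\leq l\leq k$ up to terms of order $\xi^{k/2}$, giving (\ref{t2.24}); (iii) together with the factor $\beta>1$ multiplying $d_{k+1}^2$ created by the regularity term in (\ref{t2.18}), this yields the self-contained recursion $d_{k+1}^2\leq\gamma\sup_{\lfloor(k+1)/2\rfloor\leq l\leq k}d_l^2+e_k$ with $\gamma<1$ precisely under (\ref{17}); and (iv) the polynomial rate comes from iterating this sup-recursion over the dyadic decomposition $k+1\in[2^{m-1},2^m-1]$, i.e.\ only about $\log_2 k$ contractions ((\ref{t2.30})--(\ref{o5})), which is exactly why the rate is $O(\xi^{\ln k})$ rather than linear. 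Your heuristic that the exponent comes from a mixing window of length $\sim Q$ therefore does not lead to the claimed bound, and the rate part of your plan would not go through as described.
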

\begin{proof}
The proof is given in Section \ref{s5.1}.
\end{proof}

\begin{remark}\label{r3}
From the above theorem, it can be obviously seen that the D-KM iteration enjoys the convergence rate $O(1/k^{\ln(1/\xi)})$, i.e., $O(\xi^{\ln k})$, which is slower than the linear convergence rate, i.e., $O(\xi^k)$. The slower convergence rate for the D-KM iteration can be attributed to local communications among agents, since all agents need to exchange their own information to their neighbors for achieving the synchronization of $x_{i,k}$'s for all $i\in[N]$. In this regard, it is unknown whether or not the linear convergence rate can be achieved for the D-KM iteration under the same assumptions or the bounded power regularity introduced later in Definition \ref{d2}, which is left as our future work.
\end{remark}

As a matter of fact, the convergence rate $O(1/k^{\ln(1/\xi)})$ for the D-KM iteration can still be ensured under another relaxed assumption. Specifically, we introduce a novel concept of bounded power regularity for a family of operators.

\begin{definition}\label{d2}
Let $\mathcal{D}$ be a nonempty subset of $\mathcal{H}$, and let $T_i:\mathcal{D}\to\mathcal{H}$ be an operator for each $i\in[m]$, along with $S^*:=\cap_{i=1}^m Fix(T_i)\neq \emptyset$. It is said that:
\begin{enumerate}
  \item $\{T_i\}_{i=1}^m$ are {\em power regular} with constant $\kappa\geq 0$ if for all $x\in\mathcal{D}$
  \begin{align}
  d_{S^*}(x)\leq \kappa\sum_{i=1}^m \|x-T_i(x)\|.           \label{19}
  \end{align}
  \item $\{T_i\}_{i=1}^m$ are {\em boundedly power regular} if for any bounded set $\Theta\subset\mathcal{D}$, there exists $\kappa\geq 0$ such that for all $x\in \Theta$
  \begin{align}
  d_{S^*}(x)\leq \kappa\sum_{i=1}^m\|x-T_i(x)\|.           \label{20}
  \end{align}
\end{enumerate}
\end{definition}

In the sequel, it is shown that (bounded) power regularity for a set of operators can be implied by (bounded) linear regularities of each operator and their fixed point sets.

\begin{proposition}\label{p1}
For a finite family of operators $T_i:\mathcal{D}\to\mathcal{H}$, $i\in[m]$, along with $S^*:=\cap_{i=1}^m Fix(T_i)\neq \emptyset$, if $T_i$ is (boundedly) linearly regular for each $i\in[m]$ and meanwhile the sets $\{Fix(T_i)\}_{i=1}^m$ are (boundedly) linearly regular, then $\{T_i\}_{i=1}^m$ are (boundedly) power regular.
\end{proposition}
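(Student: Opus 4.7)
The plan is to prove Proposition \ref{p1} by directly chaining the two linear regularity estimates and then passing from a maximum to a sum. I will treat the unbounded and bounded cases in parallel, since they differ only in the domain on which the inequalities are required to hold.

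First I would unpack the hypotheses. By the (bounded) linear regularity of the family $\{Fix(T_i)\}_{i=1}^m$, there exists a constant $\mu > 0$ (on all of $\mathcal{D}$ in the unbounded case, or depending on the chosen bounded $\Theta\subset\mathcal{D}$ in the bounded case) such that
\begin{align*}
d_{S^*}(x)\leq \mu\max_{i\in[m]} d_{Fix(T_i)}(x).
\end{align*}
By the (bounded) linear regularity of each $T_i$, there exist constants $\kappa_i\geq 0$ (again, either globally or with respect to the same $\Theta$) such that $d_{Fix(T_i)}(x)\leq \kappa_i\|x-T_i(x)\|$ for all $x$ in the relevant set.

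Next, I would substitute the second inequality into the first to obtain
\begin{align*}
d_{S^*}(x)\leq \mu\max_{i\in[m]}\bigl(\kappa_i\|x-T_i(x)\|\bigr)\leq \mu\kappa_{\max}\max_{i\in[m]}\|x-T_i(x)\|,
\end{align*}
where $\kappa_{\max}:=\max_{i\in[m]}\kappa_i$. Since each term $\|x-T_i(x)\|$ is nonnegative, the maximum is bounded by the sum, so
\begin{align*}
d_{S^*}(x)\leq \mu\kappa_{\max}\sum_{i=1}^m\|x-T_i(x)\|.
\end{align*}
Setting $\kappa:=\mu\kappa_{\max}$ yields exactly the required inequality in Definition \ref{d2}.

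For the bounded case, the only thing to verify is that the constants $\mu$ and $\kappa_i$ can be chosen consistently for one and the same bounded set $\Theta$. This is immediate: given any bounded $\Theta\subset\mathcal{D}$, apply the bounded linear regularity definition to each $T_i$ (yielding $\kappa_i=\kappa_i(\Theta)$) and to the family $\{Fix(T_i)\}$ (yielding $\mu=\mu(\Theta)$), then combine as above to obtain a single constant $\kappa=\kappa(\Theta)$ witnessing bounded power regularity. There is no real obstacle here; the argument is a short chain of inequalities, and the only minor subtlety worth flagging in the write-up is the uniform choice of $\Theta$ in the bounded version so that the two hypotheses can be composed without shrinking the domain.
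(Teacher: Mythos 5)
Your proof is correct and follows essentially the same route as the paper's: chain the two linear regularity estimates, bound the maximum by the sum, and absorb the constants into $\kappa:=\mu\kappa_{\max}$ (the paper merely performs the max-to-sum step on the distances $d_{Fix(T_i)}(x)$ before inserting $\kappa_i$, which is an immaterial reordering). Your explicit remark on choosing the constants for one and the same bounded set $\Theta$ is a fine point the paper leaves implicit in its ``similarly proved'' statement, but it does not change the argument.
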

\begin{proof}
Let us first focus on linear regularity. With reference to the conditions in this proposition, there exist constants $\kappa_i$ and $\mu$ such that for all $x\in\mathcal{D}$
\begin{align}
d_{Fix(T_i)}(x)&\leq \kappa_i\|x-T_i(x)\|,           \nonumber\\
d_{S^*}(x)&\leq \mu\max_{i\in[m]}d_{Fix(T_i)}(x),     \nonumber
\end{align}
which implies that
\begin{align}
\max_{i\in[m]}d_{Fix(T_i)}(x)\leq \sum_{i=1}^m d_{Fix(T_i)}(x)\leq \bar{\kappa}\sum_{i=1}^m\|x-T_i(x)\|,     \nonumber
\end{align}
where $\bar{\kappa}:=\max_{i\in[m]}\kappa_i$. As a result, one can obtain that $d_{S^*}(x)\leq \mu\max_{i\in[m]}d_{Fix(T_i)}(x)\leq \mu\bar{\kappa}\sum_{i=1}^m\|x-T_i(x)\|$, which thereby implies the power regularity for the set of $T_i$'s according to Definition \ref{d2}. Furthermore, the case with bounded power regularity can be similarly proved.
\end{proof}

It can be seen from Proposition \ref{p1} that (bounded) power regularity in Definition \ref{d2} is more relaxed than the notion of (bounded) linear regularity in Definition \ref{d1}. In fact, (bounded) power regularity in Definition \ref{d2} is strictly looser than (bounded) linear regularity in Definition \ref{d1}, which can be illustrated by the following example.

\begin{example}\label{e1}
Let $\mathcal{H}=\mathbb{R}$ and $\mathcal{D}=[0,1)$ in Definitions \ref{d1} and \ref{d2}, and consider two operators as $T_1(x)=x^2$ and $T_2(x)=P_C(x)$ for $x\in\mathcal{D}$, where $C=[0,1/2]$. Then, it is easy to see that $Fix(T_1)=\{0\}$, $Fix(T_2)=C$, and hence $\Omega:=Fix(T_1)\cap Fix(T_2)=\{0\}$. It is also straightforward to obtain that $d_{Fix(T_1)}(x)=x$, $\|x-T_1(x)\|=x(1-x)$ for $x\in\mathcal{D}$, thus leading to that $T_1$ is not linearly regular since $\|x-T_1(x)\|\to 0$ as $x\to 1$. But one can easily check that there holds $d_\Omega(x)\leq 2\sum_{i=1}^2\|x-T_i(x)\|$ for all $x\in\mathcal{D}$, which indicates that $\{T_i\}_{i=1}^2$ are power regular with constant $2$.
\end{example}

It is also noteworthy that notions in Definition \ref{d2} can be regarded as a generalization of (bounded) linear regularity for a single operator to multiple operators. Then, instead of Assumption \ref{a2}, the following less restrictive assumption can be made.

\begin{assumption}\label{a3}
$\{F_i\}_{i\in[N]}$ is boundedly power regular.
\end{assumption}

With this assumption, one can obtain the following result.

\begin{theorem}\label{t3}
Let Assumptions \ref{a1} and \ref{a3} hold. Then all $x_{i,k}$'s in the D-KM iteration, i.e., $\epsilon_{i,k}\equiv 0$ in (\ref{8}) for all $i\in[N]$ and $k\in\mathbb{N}$, converge to a common point in $X^*$ with a rate $O(1/k^{\ln(1/\xi)})$, if condition (\ref{17}) holds, where $\xi$ is given in Lemma \ref{l1}.
\end{theorem}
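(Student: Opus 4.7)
The plan is to piggyback on the proof of Theorem \ref{t2} by observing that the bounded linear regularity assumption is used there only to produce an inequality of the shape $d_{X^*}(y)\le C\sum_{i=1}^N\|y-F_i(y)\|$ on a ball containing the iterates, and this is exactly what bounded power regularity delivers directly. So the strategy is to rerun the proof of Theorem \ref{t2} verbatim, with the compound constant $\kappa_c\kappa_0$ replaced everywhere by the single power regularity constant $\kappa$ from Assumption \ref{a3}.

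First I would invoke Theorem \ref{t1} (whose hypotheses include Assumption \ref{a1} and are unaffected by which regularity we impose) to conclude that the D-KM iterates $\{x_{i,k}\}$ are bounded, so there exists $\chi>0$ with $x_{i,k}\in B(0;\chi)$ for all $i,k$. Since all averaging combinations $\hat{x}_{i,k}=\sum_j a_{ij,k}x_{j,k}$ are convex combinations, they also lie in $B(0;\chi)$. Next I would apply Assumption \ref{a3} to the bounded set $B(0;\chi)\cap\mathcal{D}$ to extract a constant $\kappa\ge 0$ such that
\begin{align}
d_{X^*}(y)\le \kappa\sum_{i=1}^N\|y-F_i(y)\|,\qquad \forall y\in B(0;\chi).  \nonumber
\end{align}
This single inequality replaces the two-step estimate (\ref{15})--(\ref{16}) that was derived under Assumption \ref{a2}; note that (\ref{15})--(\ref{16}) together give precisely $d_{X^*}(y)\le \kappa_c\kappa_0\sum_i\|y-F_i(y)\|$ after using $\max_i(\cdot)\le\sum_i(\cdot)$, which is the same structural bound with $\kappa$ playing the role of $\kappa_c\kappa_0$.

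The body of the proof would then mirror Section \ref{s5.1}: one tracks a Lyapunov-type quantity involving $d_{X^*}(\bar x_k)$ (where $\bar x_k$ is the $\pi_k$-weighted average of the $x_{i,k}$'s from Lemma \ref{l1}) plus a consensus-error term $\sum_i\|x_{i,k}-\bar x_k\|^2$, uses the $\alpha$-averagedness of each $M_{i,k}$ from (\ref{11}) to contract along each agent's update, and applies Lemma \ref{l1} to control how fast the consensus error decays at geometric rate $\xi$. Wherever the previous proof bounded $d_{X^*}$ via the chain $d_{X^*}\le\kappa_0\max_i d_{X_i}\le \kappa_0\kappa_c\max_i\|\cdot-F_i(\cdot)\|$, I would substitute the direct bound above. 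Condition (\ref{17}) should be read with $\kappa_c\kappa_0$ replaced by $\kappa$ (and $\gamma_2$ in (\ref{18}) likewise adjusted); under that (renormalized) condition the same recursive inequality closes, yielding the rate $O(1/k^{\ln(1/\xi)})$ as in Theorem \ref{t2}.

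The main obstacle I expect is purely bookkeeping rather than conceptual: the proof of Theorem \ref{t2} almost certainly uses $\|y-F_i(y)\|^2$ on the right-hand side at some stage (so that it telescopes against the fej\'er-type decrease $\|x_{i,k}-F_i(x_{i,k})\|^2$ produced by averagedness), and the power regularity bound is in the $\ell^1$-norm of $(\|y-F_i(y)\|)_{i=1}^N$ rather than the $\ell^2$-norm. This is handled cleanly by Cauchy--Schwarz, $\bigl(\sum_i\|y-F_i(y)\|\bigr)^2\le N\sum_i\|y-F_i(y)\|^2$, which only inflates $\kappa^2$ by a factor of $N$ and is absorbed into the constant in condition (\ref{17}). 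Once this translation is in place, the entire argument, including the final derivation of the $O(1/k^{\ln(1/\xi)})$ rate via the same geometric-to-polynomial conversion used in Theorem \ref{t2}, transfers without further modification.
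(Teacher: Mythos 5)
Your proposal is correct and matches the paper's own argument: the paper likewise invokes Theorem \ref{t1} for boundedness, applies bounded power regularity at $\bar{x}_{k+1}$ to get $d_{X^*}(\bar{x}_{k+1})\leq\kappa_d\sum_{i=1}^N\|F_i(\bar{x}_{k+1})-\bar{x}_{k+1}\|$, squares it via Cauchy--Schwarz into $d_{X^*}^2(\bar{x}_{k+1})\leq N\kappa_d^2\sum_{i=1}^N\|F_i(\bar{x}_{k+1})-\bar{x}_{k+1}\|^2$, and then notes this plays exactly the role of (\ref{t2.11}) with different coefficients so that the rest of Theorem \ref{t2}'s proof carries over. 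Your remark about reinterpreting the constants in condition (\ref{17}) (replacing $\kappa_c\kappa_0$ by the power-regularity constant) is precisely the adjustment the paper leaves implicit in the phrase ``with different coefficients.''
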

\begin{proof}
The proof is given in Section \ref{s5.1}.
\end{proof}

As seen from Theorem \ref{t3}, the convergence rate is proportional to $k^{-\ln(1/\xi)}$, like a power function of $k$, which is the reason for calling the ``power'' regularity in Definition \ref{d2}.

\section{The D-IBKM Iteration}\label{s4}

The focus of this section is on randomly updating a part of the coordinate for each agent, instead of the entire coordinate, in order to reduce the computational complexity and the burden of storage and transmission, especially for the case with large-scale coordinates and large-scale network, as investigated for centralized algorithms \cite{ortega1970iterative,combettes2004solving,combettes2015stochastic}.

To begin with, it is convenient to introduce some notations employed in this section.

{\em Notations:} $\mathcal{H}=\mathcal{H}_1\oplus\cdots\oplus\mathcal{H}_m$ is the direct Hilbert sum with Borel $\sigma$-algebra $\mathcal{B}$, where $\mathcal{H}_i,i\in[m]$ is a separable real Hilbert space, along with the same inner product $\langle\cdot,\cdot\rangle$ and associated norm $\|\cdot\|$. A $\mathcal{H}$-valued random variable is a measurable map $x:(\Omega,\mathcal{F})\to(\mathcal{H},\mathcal{B})$ with the standing probability space $(\Omega,\mathcal{F},\mathbb{P})$, endowed with the expectation $\mathbb{E}$, where a measurable (or $\mathcal{F}$-measurable) map means that there holds $\{\omega\in\Omega: x(w)\in S\}\subset \mathcal{F}$ for every set $S\in\mathcal{B}$. Let $x=(x_1,\ldots,x_m)$ denote a generic vector in $\mathcal{H}$, and let $\sigma(G)$ denote the $\sigma$-algebra generated by the collection $G$ of random variables. Denote by $\mathfrak{F}=\{\mathcal{F}_k\}_{k\in\mathbb{N}}$ a filtration, i.e., each $\mathcal{F}_i$ is a sub-sigma algebra of $\mathcal{F}$ such that $\mathcal{F}_k\subset\mathcal{F}_{k+1}$ for all $k\in\mathbb{N}$. Let $\ell_+(\mathfrak{F})$ be the set of $[0,\infty)$-valued random variable sequence $\{\zeta_k\}_{k\in\mathbb{N}}$ adapted to $\mathfrak{F}$, i.e., $\zeta_k$ is $\mathcal{F}_k$-measurable for all $k\in\mathbb{N}$, and define $\ell_+^1(\mathfrak{F})=\{\{\zeta_k\}_{k\in\mathbb{N}}\in\ell_+(\mathfrak{F}):\sum_{k\in\mathbb{N}}\zeta_k<\infty~a.s.\}$. Throughout this section, all inequalities and equalities are understood to hold $\mathbb{P}$-almost surely whenever in the presence of random variables, even though ``$\mathbb{P}$-almost surely'' is not explicitly expressed. For brevity, we abbreviate ``$\mathbb{P}$-almost surely'' as ``a.s.'' subsequently.

Consider now problem (\ref{6}). In this case, $F_i:x\mapsto (F_{il}(x))_{l\in[m]}$ is nonexpansive with $F_{il}:\mathcal{H}\to\mathcal{H}_l$ being measurable for all $i\in[N]$ and $l\in[m]$. To solve this problem, a block-coordinate based distributed algorithm, called distributed inexact block-coordinate KM (D-IBKM) iteration, is proposed as follows,
\begin{align}
x_{il,k+1}=\hat{x}_{il,k}+b_{il,k}\alpha_{i,k}(F_{il}(\hat{x}_{i,k})+\epsilon_{il,k}-\hat{x}_{il,k})     \label{01}
\end{align}
for $l\in[m]$ and $i\in[N]$, where $x_{i,k}=(x_{i1,k},\ldots,x_{im,k})$ serves as an estimate of a solution to problem (\ref{6}) for agent $i$ at time $k\geq 0$, $\hat{x}_{il,k}:=\sum_{j=1}^N a_{ij,k}x_{jl,k}$ for all $l\in[m]$, $\hat{x}_{i,k}=(\hat{x}_{i1,k},\ldots,\hat{x}_{im,k})$ is an aggregate information of agent $i$ received from its neighbors at time slot $k$, $\{b_{i,k}\}_{k\in\mathbb{N}}$ is a sequence of identically distributed $\Lambda$-valued random variables with $\Lambda:=\{0,1\}^m\backslash \{0\}$, $b_{i,k}=(b_{i1,k},\ldots,b_{im,k})$, $\epsilon_{i,k}=(\epsilon_{i1,k},\ldots,\epsilon_{im,k})$ is a $\mathcal{H}$-valued random variable, viewed as the error of approximating $F_i(\hat{x}_{i,k})$, and $\{\alpha_{i,k}\}_{k\in\mathbb{N}}$ is a sequence of relaxation parameters, satisfying $\alpha_{i,k}\in[\alpha,1-\alpha]$ for a constant $\alpha\in(0,1/2]$. Wherein, let $x_{i,0}$ be a $\mathcal{H}$-valued random variable for all $i\in[N]$.

To proceed, set $\chi_k:=\sigma(\chi_{1,k},\ldots,\chi_{N,k})$ with $\chi_{i,k}:=\sigma(x_{i,0},\ldots,x_{i,k})$, and let $\mathcal{E}_{i,k}:=\sigma(b_{i,k})$ for $i\in[N]$ and $k\in\mathbb{N}$, for which it is assumed that $\mathcal{E}_{i,k}$ is independent of $\chi_k$ and $\mathcal{E}_{j,k}$ for $j\neq i\in[N]$. Also, define $\chi=\{\chi_k\}_{k\in\mathbb{N}}$. In the meantime, assume that $p_l:=\mathbb{P}(b_{il,0}=1)>0$ for all $i\in[N]$ and $l\in[m]$, meaning that every block-coordinate has a chance to update.

Regarding iteration (\ref{01}), it can be equivalently written as
\begin{align}
x_{il,k+1}=\hat{x}_{il,k}+\alpha_{i,k}(T_{il,k}+\varepsilon_{il,k}-\hat{x}_{il,k}),      \label{02}
\end{align}
where $\varepsilon_{il,k}:=b_{il,k}\epsilon_{il,k}$, and
\begin{align}
T_{il,k}:=\hat{x}_{il,k}+b_{il,k}(F_{il}(\hat{x}_{i,k})-\hat{x}_{il,k}).            \label{03}
\end{align}

After setting $T_{i,k}:=(T_{i1,k},\ldots,T_{im,k})$ and $\varepsilon_{i,k}:=(\varepsilon_{i1,k},\ldots,\varepsilon_{im,k})$, (\ref{02}) can be compactly written as
\begin{align}
x_{i,k+1}=\hat{x}_{i,k}+\alpha_{i,k}(T_{i,k}+\varepsilon_{i,k}-\hat{x}_{i,k}).      \label{04}
\end{align}

Similarly to Section \ref{s3}, denote by $X_i:=Fix(F_i)$ the set of fixed points of $F_i$, and $X^*:=\cap_{i=1}^NX_i$ the set of common fixed points of all $F_i$'s which is assumed to be nonempty. It is also necessary to define a new norm $|||\cdot|||$ with associated inner product $\langle\langle\cdot,\cdot\rangle\rangle$ on $\mathcal{H}$ as in \cite{combettes2015stochastic}
\begin{align}
|||y|||^2&:=\sum_{l=1}^m \frac{1}{p_l}\|y_l\|^2,                 \nonumber\\
\langle\langle y,z\rangle\rangle &:=\sum_{l=1}^m \frac{1}{p_l}\langle y_l,z_l\rangle,~~~\forall y,z\in\mathcal{H}.              \label{09}
\end{align}
It is noted that $\|y\|^2\leq |||y|||^2\leq \|y\|^2/p_0$, meaning that the two norms are equivalent, where $p_0:=\min_{l\in[m]}p_l$.

Equipped with the above preparations, we are now ready to present the main result of this section.

\begin{theorem}\label{t4}
For the D-IBKM iteration (\ref{01}) under Assumption \ref{a1} and the assumption that $\sum_{k\in\mathbb{N}}\sqrt{\mathbb{E}(\|\epsilon_{i,k}\|^2|\chi_k)}<\infty$ for all $i\in[N]$, the following two statements hold:
\begin{enumerate}
  \item All $x_{i,k}$'s are bounded and converge weakly, in the space $(\mathcal{H},|||\cdot|||)$, to a common point in $X^*$ a.s.;
  \item There exists a subsequence $\{k_s\}_{s=1}^\infty\subset \mathbb{N}$, such that for all $i\in[N]$
  \begin{align}
  \mathbb{E}(\|F_i(x_{i,k_s})-x_{i,k_s}\|)=O(\frac{1}{\sqrt{k_s}}),~~\text{a.s.}        \label{05}
  \end{align}
\end{enumerate}
\end{theorem}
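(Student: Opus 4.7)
The plan is to mirror the proof of Theorem~\ref{t1}, but in the stochastic block-coordinate setting, using the weighted norm $|||\cdot|||$ in place of $\|\cdot\|$ and appealing to the Robbins--Siegmund almost-supermartingale convergence theorem in place of its deterministic counterpart. The motivation for the weights $1/p_l$ in $|||\cdot|||$ is precisely that $\mathbb{E}(b_{il,k}/p_l \mid \chi_k)=1$, so that the stochastic block mask is neutralized upon taking conditional expectations and the recursion formally collapses to the full-coordinate D-IKM recursion in expectation.

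The first technical step is a one-step conditional recursion for $|||x_{i,k+1}-x^*|||^2$ with $x^*\in X^*$. Using $b_{il,k}^2=b_{il,k}$, the expansion
\begin{align*}
\|x_{il,k+1}-x^*_l\|^2 &= \|\hat x_{il,k}-x^*_l\|^2 \\
&\quad + 2b_{il,k}\alpha_{i,k}\langle \hat x_{il,k}-x^*_l,\, F_{il}(\hat x_{i,k})-\hat x_{il,k}+\epsilon_{il,k}\rangle \\
&\quad + b_{il,k}\alpha_{i,k}^2\|F_{il}(\hat x_{i,k})-\hat x_{il,k}+\epsilon_{il,k}\|^2,
\end{align*}
weighted by $1/p_l$ and summed over $l$, then combined with the $\alpha$-averaged identity for $M_{i,k}$ in \eqref{11}, should yield
\begin{align*}
\mathbb{E}(|||x_{i,k+1}-x^*|||^2\mid\chi_k) &\leq |||\hat x_{i,k}-x^*|||^2 \\
&\quad - c\,\|F_i(\hat x_{i,k})-\hat x_{i,k}\|^2 + r_{i,k},
\end{align*}
for some $c>0$, where $r_{i,k}$ is bounded by a constant multiple of $\|\hat x_{i,k}-x^*\|\sqrt{\mathbb{E}(\|\epsilon_{i,k}\|^2\mid\chi_k)}+\mathbb{E}(\|\epsilon_{i,k}\|^2\mid\chi_k)$, using Cauchy--Schwarz for the cross term involving $\epsilon_{i,k}$ and the norm-equivalence $\|y\|^2\leq|||y|||^2\leq\|y\|^2/p_0$.

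Summing these inequalities across $i\in[N]$, weighted by the Perron vector $\pi_{i,k}$ from Lemma~\ref{l1} so that the row-stochasticity of $A_k$ together with convexity of $\|\cdot\|^2$ turns $\sum_i\pi_{i,k+1}|||\hat x_{i,k}-x^*|||^2$ into $\sum_i\pi_{i,k}|||x_{i,k}-x^*|||^2$, and invoking Robbins--Siegmund using the hypothesis $\sum_k\sqrt{\mathbb{E}(\|\epsilon_{i,k}\|^2\mid\chi_k)}<\infty$, yields a.s.\ convergence of the weighted Lyapunov function, a.s.\ boundedness of $\{x_{i,k}\}$, and a.s.\ summability $\sum_k\|F_i(\hat x_{i,k})-\hat x_{i,k}\|^2<\infty$ for every $i$. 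A standard consensus argument as in Theorem~\ref{t1}, based on Lemma~\ref{l1}, forces $\|x_{i,k}-x_{j,k}\|\to0$ a.s., and demiclosedness of $F_i-Id$ at $0$ then ensures that every weak cluster point lies in $X^*$; Fej\'er-type uniqueness of the weak limit gives the weak convergence in $(\mathcal{H},|||\cdot|||)$ claimed in part~1. For part~2, taking unconditional expectations in the recursion gives $\sum_k\mathbb{E}(\|F_i(\hat x_{i,k})-\hat x_{i,k}\|^2)<\infty$; a pigeonhole step produces $\{k_s\}$ with $\mathbb{E}(\|F_i(\hat x_{i,k_s})-\hat x_{i,k_s}\|^2)=O(1/k_s)$, whence Jensen and then consensus (with nonexpansiveness of $F_i$) transfer the bound from $\hat x_{i,k_s}$ to $x_{i,k_s}$, producing the stated $O(1/\sqrt{k_s})$ rate.

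The main obstacle is establishing the conditional Fej\'er recursion itself, since it must simultaneously accommodate the block mask $b_{i,k}$, the network average $\hat x_{i,k}$, and a stochastic error $\epsilon_{i,k}$ that is not assumed to be $\chi_k$-measurable. In particular, the cross terms $\langle\hat x_{il,k}-x^*_l,\epsilon_{il,k}\rangle$ have to be controlled by a Cauchy--Schwarz bound of the form $\|\hat x_{i,k}-x^*\|\sqrt{\mathbb{E}(\|\epsilon_{i,k}\|^2\mid\chi_k)}$, but the needed boundedness of $\|\hat x_{i,k}-x^*\|$ is only available \emph{after} Robbins--Siegmund is applied; this circularity is resolved by a bootstrap in which the recursion is first used to show that the weighted Lyapunov sequence is an almost-supermartingale up to summable perturbations, and a.s.\ boundedness is then fed back into the cross-term control.
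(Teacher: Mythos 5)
Your overall route matches the paper's: the $1/p_l$-weighted norm $|||\cdot|||$ neutralizes the random mask under conditional expectation, a conditional Fej\'er-type recursion weighted by the Perron vector $\pi_k$ of Lemma \ref{l1} is closed by Robbins--Siegmund (Lemma \ref{l6}), consensus plus demiclosedness yields the a.s.\ weak limit in $X^*$, and summability plus a pigeonhole/contradiction argument yields the subsequence rate. One presentational weakness: your ``bootstrap'' for the error cross term is circular as stated, since making the squared recursion an almost-supermartingale with summable perturbation already requires $\|\hat{x}_{i,k}-x^*\|$ bounded, which is what Robbins--Siegmund was supposed to deliver. The paper avoids this by first running the recursion on the \emph{unsquared} quantity $|||x_{i,k+1}-x^*|||$, where the error enters only additively via the triangle inequality and conditional Jensen, so Lemma \ref{l6} gives a.s.\ boundedness with no circularity, and only then is the resulting a.s.\ bound ($\tau_1$) fed into the squared recursion. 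Your single-recursion variant can be repaired by absorbing the cross term multiplicatively, e.g.\ $\|\hat{x}_{i,k}-x^*\|\,s_k\le\tfrac12(1+\|\hat{x}_{i,k}-x^*\|^2)s_k$ with $s_k:=\sqrt{\mathbb{E}(\|\epsilon_{i,k}\|^2|\chi_k)}$ summable, which fits the $(1+\varsigma_k)$ factor in Lemma \ref{l6}; but some such device has to be made explicit.

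The genuine gap is in part 2): transferring the $O(1/\sqrt{k_s})$ bound from $\hat{x}_{i,k_s}$ to $x_{i,k_s}$ ``by consensus'' does not work. The inequality $\|F_i(x_{i,k})-x_{i,k}\|\le\|F_i(\hat{x}_{i,k})-\hat{x}_{i,k}\|+2\|x_{i,k}-\hat{x}_{i,k}\|$ would require $\mathbb{E}\|x_{i,k_s}-\hat{x}_{i,k_s}\|=O(1/\sqrt{k_s})$, and under the assumptions of Theorem \ref{t4} the consensus error is only shown to vanish, with no rate; a quantified consensus rate is obtained only later, under Assumption \ref{a3} and the step-size condition \eqref{07}, and even the geometric mixing bound of the type \eqref{t2.23} involves the perturbations over the whole window $[\lfloor (k_s+1)/2\rfloor,k_s]$, which your subsequence bound does not control. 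The paper transfers through the update itself: Lemma \ref{l8} (the stochastic analogue of Lemma \ref{l5}) bounds $\mathbb{E}(|||F_i(x_{i,k+1})-x_{i,k+1}|||^2|\chi_k)$ by $4|||F_i(\hat{x}_{i,k})-\hat{x}_{i,k}|||^2$ plus a summable error, so the summability in \eqref{t4.7} together with \eqref{t4.b} passes to the residuals at the iterates $x_{i,k}$ themselves, and \eqref{05} then follows by the same contradiction argument used for \eqref{12}, using \eqref{t4.5} and the law of total expectation. You need this update-based transfer (or an equivalent) to complete part 2).
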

\begin{proof}
The proof is given in Section \ref{s5.2}.
\end{proof}

\begin{remark}\label{r4}
It should be noted that when there is only one agent in a multi-agent network, the results in Theorem \ref{t4} reduces to the centralized case \cite{combettes2015stochastic}. However, the analysis for the distributed iteration here is more complicated than that for the centralized scenario, rendering the argument in \cite{combettes2015stochastic} not directly applicable here. In addition, the convergence rate is not investigated in \cite{combettes2015stochastic}, while the convergence speed is provided here, see also Theorem \ref{t5} below.
\end{remark}

To further investigate the convergence rate of D-IBKM in (\ref{01}), let us recall Definition \ref{d2} for the bounded power regularity of a family of operators. It is known from Theorem \ref{t4} that all $x_{i,k}$'s are bounded, connoting that there is a constant $\Upsilon>0$ such that $\|x_{i,k}\|\leq \Upsilon$ for all $i\in[N],k\in\mathbb{N}$. As a consequence, under Assumption \ref{a3}, there must exist a constant $\nu>0$ such that for all $y\in B(0;\Upsilon)\subset\mathcal{H}$
\begin{align}
d_{X^*}(y)\leq\nu \sum_{i=1}^N \|F_i(y)-y\|.       \label{06}
\end{align}

Now, the stronger convergence of D-IBKM in (\ref{01}) can be given as follows.

\begin{theorem}\label{t5}
Under Assumptions \ref{a1} and \ref{a3} for the D-IBKM iteration (\ref{01}), $\lim_{k\to\infty}\mathbb{E}(|||x_{i,k}-q_k|||^2)=0$ for all $i\in[N]$ a.s., if there holds
\begin{align}
\alpha_c<\min\Big\{\frac{p_0(1-\xi)}{4N^2\varpi\xi}\sqrt{\frac{\underline{\pi}}{2(p_0^2+8N\nu^2)}},1-\alpha\Big\},      \label{07}
\end{align}
where $q_k:=\sum_{i=1}^N\pi_{i,k}P_{X^*}(x_{i,k})$, $p_0:=\min_{l\in[m]}p_l$, $\kappa_c:=\max_{i\in[N]}\kappa_i$, $\alpha_c:=\max_{i\in[N],k\in\mathbb{N}}\alpha_{i,k}$, and $\xi$ is given in Lemma \ref{l1}. Moreover, in the absence of errors (called D-BKM iteration for (\ref{01}) in this case), i.e., $\epsilon_{i,k}\equiv 0$ for all $i\in[N]$ and $k\in\mathbb{N}$, $\mathbb{E}(|||x_{i,k}-q_k|||^2)$ converges to zero with a rate $O(1/k^{\ln(1/\xi)})$ a.s. under condition (\ref{07}).
\end{theorem}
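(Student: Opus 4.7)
The plan is to extend the proof strategy of Theorem~\ref{t2} to the stochastic block-coordinate setting, exploiting the fact that the weights $1/p_l$ in $|||\cdot|||$ are chosen precisely to cancel the block-selection probabilities under conditional expectation. Starting from the compact form (\ref{04}) and fixing any $y\in X^*$, I would expand $\mathbb{E}(|||x_{i,k+1}-y|||^2\mid\chi_k)$, using the independence of $\mathcal{E}_{i,k}$ from $\chi_k$ together with $\mathbb{E}(b_{il,k})=p_l$. Combined with the $\alpha$-averaged structure of the per-block update and the nonexpansiveness of $F_i$, this will yield an inequality of the schematic form
\begin{equation*}
\mathbb{E}(|||x_{i,k+1}-y|||^2\mid\chi_k) \le |||\hat x_{i,k}-y|||^2 - \theta\|F_i(\hat x_{i,k})-\hat x_{i,k}\|^2 + r_{i,k},
\end{equation*}
with some $\theta>0$, where $r_{i,k}$ is controlled by $\mathbb{E}(\|\epsilon_{i,k}\|^2\mid\chi_k)$ and its square root, hence summable by hypothesis.

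Next, I would sum the per-agent inequality weighted by $\pi_{i,k+1}$ and invoke Lemma~\ref{l1}(3), $\pi_k^\top=\pi_{k+1}^\top A_k$, together with convexity of $|||\cdot|||^2$, to convert $\sum_i\pi_{i,k+1}|||\hat x_{i,k}-y|||^2$ back to $\sum_i\pi_{i,k}|||x_{i,k}-y|||^2$. Choosing $y=P_{X^*}(q_k)$ and using Lemma~\ref{l1}(1) to control the consensus residual $\sum_i\mathbb{E}(|||x_{i,k}-q_k|||^2)$ via the geometric factors $\varpi\xi^{s-k}$ produces a coupled recursion relating the weighted distance from $q_k$ to $X^*$ and the per-agent disagreement. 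The bounded power regularity bound (\ref{06}) then closes the loop by converting the negative fixed-point residuals $\sum_i\|F_i(q_k)-q_k\|$ into a negative multiple of $d_{X^*}(q_k)^2$; condition (\ref{07}) is exactly the smallness requirement on $\alpha_c$ ensuring that the net coefficient of this negative term remains strictly positive after all $\alpha_c$-dependent consensus and block-stochasticity cross terms have been absorbed.

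Applying the Robbins--Siegmund supermartingale lemma to the resulting closed inequality then yields the almost-sure convergence $\mathbb{E}(|||x_{i,k}-q_k|||^2)\to 0$. In the error-free case, the recursion reduces to a deterministic inequality in which the dominant non-vanishing obstruction is the geometric consensus remainder $\sim\xi^k$; comparing the Lyapunov decrement against this remainder and telescoping carefully produces the polynomial rate $O(k^{-\ln(1/\xi)})$. The sublinear exponent $\ln(1/\xi)$ arises because each ``unit'' of effective consensus progress requires on the order of $1/\ln(1/\xi)$ iterations, so after $k$ steps the number of effective contractions scales logarithmically in $k$, yielding $\exp(-\ln(1/\xi)\cdot\ln k)=k^{-\ln(1/\xi)}$.

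The main obstacle will be handling the two independent sources of randomness---the block indicators $b_{i,k}$ and the approximation errors $\epsilon_{i,k}$---simultaneously with the time-varying communication graph, while keeping constants sharp enough for condition (\ref{07}) to emerge naturally. In particular, the cross terms generated when replacing $\hat x_{i,k}$ by $q_k$ (which is needed to invoke the power regularity bound (\ref{06})) must be carefully dominated so that they do not swamp the negative drift; this is what forces the specific algebraic form of (\ref{07}) involving $p_0$, $\nu$, $\underline{\pi}$, $\varpi$, and $\xi$. A final technical step is to upgrade the conditional-expectation recursion to the unconditional one and verify that the polynomial rate is preserved in the almost-sure sense.
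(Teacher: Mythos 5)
Your overall architecture does track the paper's proof (a per-agent Lyapunov recursion in the $|||\cdot|||$ norm using Lemma \ref{l7}-type expansions and (\ref{t4.b}), $\pi$-weighted summation via Lemma \ref{l1}(3), a geometric consensus estimate, power regularity to close the loop, condition (\ref{07}) as a contraction requirement, and a telescoping argument for the rate), but two of your steps, as stated, would fail. First, you propose to invoke the bounded power regularity (\ref{06}) at $q_k$, converting $\sum_{i=1}^N\|F_i(q_k)-q_k\|$ into a negative multiple of $d_{X^*}(q_k)^2$. Since $X^*$ is closed and convex and $q_k=\sum_{i=1}^N\pi_{i,k}P_{X^*}(x_{i,k})$ is a convex combination of points of $X^*$, you have $q_k\in X^*$, hence $F_i(q_k)=q_k$ and $d_{X^*}(q_k)=0$: the inequality is vacuous and yields no drift. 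The regularity has to be applied at a point not already in $X^*$ --- in the paper, at the weighted average of the iterates $\bar x_{k+1}=\sum_{i=1}^N\pi_{i,k+1}x_{i,k+1}$ --- and this forces two extra shuttling inequalities: (\ref{t2.9}) to pass from the residuals $\|F_i(x_{i,k+1})-x_{i,k+1}\|^2$ (which is what the conditional-expectation computation actually produces, via Lemma \ref{l8} and (\ref{t4.b})) to $\|F_i(\bar x_{k+1})-\bar x_{k+1}\|^2$, and (\ref{t2.13}) to pass from $d_{X^*}^2(\bar x_{k+1})$ back to $\sum_i d_{X^*}^2(x_{i,k+1})$, both at the price of additional disagreement terms $\|x_{i,k+1}-\bar x_{k+1}\|^2$ that the constants in (\ref{t5.5}) and condition (\ref{07}) must absorb. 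The quantity $|||x_{i,k}-q_k|||^2$ in the statement is then recovered only at the end, from $|||x_{i,k}-\bar x_k|||^2$ and $|||\bar x_k-q_k|||^2\le D_k^2$ by convexity, not by running the recursion against $q_k$ directly.

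Second, the Robbins--Siegmund lemma (Lemma \ref{l6}) cannot close the argument the way you describe. After the consensus estimate is inserted, the disagreement term is bounded by a constant multiple of $\sup_{\lfloor(k+1)/2\rfloor\le l\le k}$ of the Lyapunov quantity itself, with coefficient proportional to $\alpha_c^2$ (see (\ref{t2.24})), which is not summable in $k$; the resulting inequality is therefore not of the form $(1+\varsigma_k)z_k-\vartheta_k+\eta_k$ with summable $\varsigma_k,\eta_k$, and in addition the consensus bound is only available in the (unconditional) expectation sense, so it does not mesh with an almost-sure supermartingale argument. The paper instead takes total expectations and derives the windowed contraction $\mathbb{E}(D_{k+1}^2)\le\eta\,\sup_{\lfloor(k+1)/2\rfloor\le l\le k}\mathbb{E}(D_l^2)+e_k'$ with $\eta<1$ exactly under (\ref{07}), and then the dyadic iteration already used from (\ref{t2.28}) onward in the proof of Theorem \ref{t2} yields simultaneously the convergence to zero (with summable errors) and the $O(1/k^{\ln(1/\xi)})$ rate in the error-free case. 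Your heuristic for the exponent (logarithmically many effective contractions in $k$ steps) is consistent with that mechanism, but it is the explicit window recursion, not Robbins--Siegmund, that makes it rigorous; as written, your plan has a genuine gap at both of these points.
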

\begin{proof}
The proof is given in Section \ref{s5.2}.
\end{proof}

\section{Convergence Analysis: Proofs of Theorems \ref{t1}-\ref{t5}}\label{s5}

This section aims to provide detailed convergence analysis for the main results in the last two sections, that is, the proofs of Theorems \ref{t1}-\ref{t5}.

\subsection{Proofs of Theorems \ref{t1}-\ref{t3}}\label{s5.1}

Let us first introduce several lemmas for the subsequent use.

\begin{lemma}[\cite{nedic2015distributed}]\label{l2}
Let $\{v_k\}$ be a sequence of nonnegative scalars such that for all $k\geq 0$
\begin{align}
v_{k+1}\leq(1+b_k)v_k-u_k+c_k,      \nonumber
\end{align}
where $b_k\geq0$, $u_k\geq0$ and $c_k\geq0$ for all $k\geq0$ with $\sum_{k=1}^\infty b_k<\infty$ and $\sum_{k=1}^\infty c_k<\infty$. Then, the sequence $\{v_k\}$ converges to some $v\geq 0$ and $\sum_{k=1}^\infty u_k<\infty$.
\end{lemma}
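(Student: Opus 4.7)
The plan is to absorb the multiplicative factor $(1+b_k)$ in the recursion by dividing through by a suitable product, turning the inequality into one amenable to a telescoping argument. Specifically, I would set $\beta_k:=\prod_{j=0}^{k-1}(1+b_j)$ with $\beta_0=1$. Using the elementary bound $\log(1+b_j)\le b_j$ together with $\sum_{k\ge 0}b_k<\infty$, the partial products $\beta_k$ are nondecreasing and bounded above by some finite $B\ge 1$, and therefore converge to a limit $\beta_\infty\in[1,B]$.

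Next, I would divide the given recursion by $\beta_{k+1}$ and let $w_k:=v_k/\beta_k$, $\tilde u_k:=u_k/\beta_{k+1}$, $\tilde c_k:=c_k/\beta_{k+1}$, yielding the cleaner inequality
\begin{align*}
w_{k+1}\le w_k-\tilde u_k+\tilde c_k.
\end{align*}
Because $\beta_{k+1}\ge 1$, one has $\sum_k \tilde c_k\le \sum_k c_k<\infty$. Telescoping from $0$ to $N$ gives
\begin{align*}
w_{N+1}+\sum_{k=0}^{N}\tilde u_k\le w_0+\sum_{k=0}^{N}\tilde c_k\le w_0+\sum_{k=0}^{\infty}c_k,
\end{align*}
which simultaneously bounds the partial sums of $\tilde u_k$ (hence $\sum_k u_k\le B\sum_k\tilde u_k<\infty$) and shows $\{w_k\}$ is bounded.

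To upgrade boundedness to convergence of $\{w_k\}$, I would introduce the auxiliary sequence $s_k:=w_k-\sum_{j=0}^{k-1}\tilde c_j$. The inequality becomes $s_{k+1}\le s_k-\tilde u_k\le s_k$, so $\{s_k\}$ is nonincreasing; it is also bounded below (by $-\sum_j\tilde c_j>-\infty$), hence has a limit $s^\star$. Since $\sum_j\tilde c_j$ converges, $w_k=s_k+\sum_{j<k}\tilde c_j$ converges to some $w^\star\ge 0$. Finally, because $\beta_k\to\beta_\infty\in(0,\infty)$, we conclude $v_k=\beta_k w_k\to \beta_\infty w^\star=:v\ge 0$, and the summability $\sum_k u_k<\infty$ was already established.

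The only genuinely delicate step is the passage from boundedness of $\{w_k\}$ to convergence: the raw recursion mixes an accumulation of the nonnegative ``sink'' $\tilde u_k$ with an accumulation of the ``source'' $\tilde c_k$, and neither monotonicity nor Cauchy-ness of $\{w_k\}$ is immediate. The auxiliary sequence $s_k$ is the key trick, converting the two-sided perturbation into a monotone bounded sequence whose existence of limit is automatic; once that is in hand, everything else is a routine bookkeeping of products and sums.
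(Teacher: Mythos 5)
Your proof is correct. Note that the paper itself does not prove this lemma at all: it is quoted verbatim from the cited reference (Nedi\'c and Olshevsky), so there is no in-paper argument to compare against. Your argument is the standard one for this deterministic Robbins--Siegmund-type result: normalizing by $\beta_k=\prod_{j<k}(1+b_j)$, which converges to a finite limit since $\sum_j b_j<\infty$, reduces the recursion to $w_{k+1}\le w_k-\tilde u_k+\tilde c_k$; telescoping gives boundedness of $\{w_k\}$ and summability of $\tilde u_k$ (hence of $u_k$, since $\beta_{k+1}\le B$); and the shift $s_k=w_k-\sum_{j<k}\tilde c_j$ produces a monotone bounded sequence whose limit transfers back to $w_k$ and then to $v_k=\beta_k w_k$. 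Each step checks out, including the key identity $(1+b_k)/\beta_{k+1}=1/\beta_k$ and the lower bound $s_k\ge-\sum_j\tilde c_j$, so the write-up is a complete and self-contained proof of the cited lemma.
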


\begin{lemma}\label{l3}
Consider $A\in\mathbb{R}^{n\times n}$ and let $B$ be a linear operator in real Hilbert space $\mathcal{H}$, then $\|A\otimes B\|\leq na_{max}\|B\|$, where $a_{max}$ is the largest entry of the matrix $A$ in the modulus sense.
\end{lemma}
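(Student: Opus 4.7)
The plan is to prove the bound directly from the definition of the operator norm on the product Hilbert space $\mathcal{H}^n$ equipped with $\|x\|^2=\sum_{i=1}^n\|x_i\|^2$ for $x=col(x_1,\ldots,x_n)$ with $x_i\in\mathcal{H}$. First I would write out the action: for such an $x$,
\[
(A\otimes B)x \;=\; col\!\Bigl(\sum_{j=1}^n a_{1j}Bx_j,\ldots,\sum_{j=1}^n a_{nj}Bx_j\Bigr),
\]
so that $\|(A\otimes B)x\|^2=\sum_{i=1}^n\bigl\|\sum_{j=1}^n a_{ij}Bx_j\bigr\|^2$.

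Next I would peel off the inner sum block by block. In the $i$-th block, the triangle inequality in $\mathcal{H}$ together with the operator-norm inequality $\|Bx_j\|\leq\|B\|\,\|x_j\|$ and $|a_{ij}|\leq a_{max}$ gives
\[
\Bigl\|\sum_{j=1}^n a_{ij}Bx_j\Bigr\| \;\leq\; a_{max}\|B\|\sum_{j=1}^n\|x_j\|.
\]
Squaring and summing over $i$ yields $\|(A\otimes B)x\|^2\leq n\,a_{max}^2\|B\|^2\bigl(\sum_{j=1}^n\|x_j\|\bigr)^2$.

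Then I would apply Cauchy--Schwarz in $\mathbb{R}^n$ to the vector $(\|x_1\|,\ldots,\|x_n\|)$ against the all-ones vector, yielding $\bigl(\sum_{j=1}^n\|x_j\|\bigr)^2\leq n\sum_{j=1}^n\|x_j\|^2=n\|x\|^2$. Combining this with the previous display gives $\|(A\otimes B)x\|^2\leq n^2a_{max}^2\|B\|^2\|x\|^2$, and taking square roots and supremizing over $x$ with $\|x\|\leq 1$ produces the claimed $\|A\otimes B\|\leq n\,a_{max}\|B\|$.

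There is no real obstacle here: the argument is an entirely routine two-line calculation chaining the triangle inequality, the definition of $\|B\|$, the entrywise bound by $a_{max}$, and one application of Cauchy--Schwarz. The only caveat worth flagging is that the bound is loose for well-behaved $A$ (e.g.\ it yields $n$ rather than $1$ when $A=I$), but this looseness is immaterial: the lemma is used downstream only as a crude estimate, so any such universal bound in terms of $a_{max}$ and $\|B\|$ suffices.
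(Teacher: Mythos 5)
Your proposal is correct and follows essentially the same route as the paper's proof: block-wise expansion of $\|(A\otimes B)x\|^2$, the triangle inequality with $|a_{ij}|\leq a_{max}$ and $\|Bx_j\|\leq\|B\|\|x_j\|$, then Cauchy--Schwarz on $(\|x_1\|,\ldots,\|x_n\|)$ and a supremum over the unit ball. No differences worth noting.
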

\begin{proof}
For arbitrary $x=col(x_1,\ldots,x_n)$ with $x_i\in\mathcal{H}$ and $\|x\|\leq 1$, it can be concluded that
\begin{align}
\|(A\otimes B)x\|^2&=\sum_{i=1}^n\|\sum_{j=1}^na_{ij}Bx_j\|^2       \nonumber\\
&\leq \sum_{i=1}^n a_{max}^2\Big(\sum_{j=1}^n\|B\|\cdot\|x_j\|\Big)^2    \nonumber\\
&\leq n\sum_{i=1}^n a_{max}^2\sum_{j=1}^n\|B\|^2\cdot\|x_j\|^2         \nonumber\\
&\leq n^2a_{max}^2\|B\|^2,                                        \nonumber
\end{align}
where the last inequality has used the fact that $\|x\|^2=\sum_{j=1}^n\|x_j\|^2\leq 1$. Consequently, one can obtain that $\|A\otimes B\|=\sqrt{\sup_{x\in\mathcal{H}^n,\|x\|\leq 1}\|(A\otimes B)x\|^2}\leq na_{max}\|B\|$, as claimed.
\end{proof}

\begin{lemma}\label{l4}
Let $T:\mathcal{H}\to\mathcal{H}$ be a nonexpansive operator with $Fix(T)\neq\emptyset$. Then, there holds $2\langle y-z,y-T(y)\rangle\geq \|T(y)-y\|^2$ for all $y\in\mathcal{H}$ and $z\in Fix(T)$.
\end{lemma}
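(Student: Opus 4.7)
The plan is to derive the inequality directly from nonexpansiveness of $T$ combined with the polarization identity in Hilbert space. Since $z\in Fix(T)$ means $T(z)=z$, nonexpansiveness yields $\|T(y)-z\|=\|T(y)-T(z)\|\leq \|y-z\|$, and squaring both sides gives $\|T(y)-z\|^2\leq \|y-z\|^2$. This is the only analytic ingredient I need; everything that follows is algebra.

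Next I would write $T(y)-z=(T(y)-y)+(y-z)$ and expand the squared norm on the left using the standard identity $\|a+b\|^2=\|a\|^2+2\langle a,b\rangle+\|b\|^2$. This produces
\begin{align*}
\|T(y)-y\|^2+2\langle T(y)-y,y-z\rangle+\|y-z\|^2\leq \|y-z\|^2.
\end{align*}
Cancelling $\|y-z\|^2$ on both sides and moving the inner product term to the right gives $\|T(y)-y\|^2\leq -2\langle T(y)-y,y-z\rangle=2\langle y-T(y),y-z\rangle=2\langle y-z,y-T(y)\rangle$, where the last equality is symmetry of the inner product. This is exactly the claim.

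There is no real obstacle here: the lemma is a classical one-line consequence of nonexpansiveness and the polarization expansion, and no case analysis, limits, or auxiliary estimates are needed. The only point worth noting is that the argument does not require $T$ to be firmly nonexpansive, averaged, or defined on all of $\mathcal{H}$; mere nonexpansiveness on a set containing $y$ and some fixed point $z$ suffices, which is exactly the setting invoked in the theorems of Sections III and IV where the lemma will be applied to $F_i$ and $M_{i,k}$.
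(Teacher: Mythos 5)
Your proof is correct and is essentially the paper's argument: the paper applies the identity $2\langle y-z,y-T(y)\rangle=\|T(y)-y\|^2+\|y-z\|^2-\|T(y)-z\|^2$ and then invokes $\|T(y)-z\|\leq\|y-z\|$, which is the same expansion you obtain by writing $T(y)-z=(T(y)-y)+(y-z)$ and squaring, just rearranged. No gap; nothing further is needed.
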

\begin{proof}
For any $y\in\mathcal{H}$ and $z\in Fix(T)$, it is easy to deduce that
\begin{align}
&2\langle y-z,y-T(y)\rangle      \nonumber\\
&=\|T(y)-y\|^2+\|y-z\|^2-\|T(y)-z\|^2        \nonumber\\
&\geq \|T(y)-y\|^2,          \nonumber
\end{align}
where the inequality has exploited the nonexpansive property of $T$.
\end{proof}

The following result is a fundamental result which relates $x_{i,k+1}$ to $\hat{x}_{i,k}$ via $F_i$ for each agent $i$.

\begin{lemma}\label{l5}
Consider the D-IKM iteration (\ref{8}). For all $i\in[N]$, there holds
\begin{align}
\|F_i(x_{i,k+1})-x_{i,k+1}\|\leq \|F_i(\hat{x}_{i,k})-\hat{x}_{i,k}\|+2\alpha_{i,k}\|\epsilon_{i,k}\|.      \nonumber
\end{align}
\end{lemma}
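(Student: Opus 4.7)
The plan is to estimate $\|F_i(x_{i,k+1})-x_{i,k+1}\|$ by inserting $F_i(\hat{x}_{i,k})$ as an intermediate point and exploiting both the nonexpansiveness of $F_i$ and the explicit update rule for $x_{i,k+1}$. The key observation is that $x_{i,k+1}$ is a convex combination of $\hat{x}_{i,k}$ and $F_i(\hat{x}_{i,k})$ (plus a small error term), so both $\|x_{i,k+1}-\hat{x}_{i,k}\|$ and $\|F_i(\hat{x}_{i,k})-x_{i,k+1}\|$ can be bounded linearly in $\|F_i(\hat{x}_{i,k})-\hat{x}_{i,k}\|$ and $\|\epsilon_{i,k}\|$.

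First, I would rewrite the update (\ref{8}) in the increment form
\[
x_{i,k+1}-\hat{x}_{i,k}=\alpha_{i,k}\bigl(F_i(\hat{x}_{i,k})-\hat{x}_{i,k}\bigr)+\alpha_{i,k}\epsilon_{i,k},
\]
which immediately yields the bound $\|x_{i,k+1}-\hat{x}_{i,k}\|\le \alpha_{i,k}\|F_i(\hat{x}_{i,k})-\hat{x}_{i,k}\|+\alpha_{i,k}\|\epsilon_{i,k}\|$. From the same identity one also obtains
\[
F_i(\hat{x}_{i,k})-x_{i,k+1}=(1-\alpha_{i,k})\bigl(F_i(\hat{x}_{i,k})-\hat{x}_{i,k}\bigr)-\alpha_{i,k}\epsilon_{i,k},
\]
hence $\|F_i(\hat{x}_{i,k})-x_{i,k+1}\|\le(1-\alpha_{i,k})\|F_i(\hat{x}_{i,k})-\hat{x}_{i,k}\|+\alpha_{i,k}\|\epsilon_{i,k}\|$.

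Second, I would apply the triangle inequality after inserting $F_i(\hat{x}_{i,k})$:
\[
\|F_i(x_{i,k+1})-x_{i,k+1}\|\le \|F_i(x_{i,k+1})-F_i(\hat{x}_{i,k})\|+\|F_i(\hat{x}_{i,k})-x_{i,k+1}\|,
\]
and control the first term by nonexpansiveness of $F_i$ (property (\ref{1})), giving $\|F_i(x_{i,k+1})-F_i(\hat{x}_{i,k})\|\le\|x_{i,k+1}-\hat{x}_{i,k}\|$. Plugging in the two bounds from the first step and combining the coefficients $\alpha_{i,k}+(1-\alpha_{i,k})=1$ on $\|F_i(\hat{x}_{i,k})-\hat{x}_{i,k}\|$ and $\alpha_{i,k}+\alpha_{i,k}=2\alpha_{i,k}$ on $\|\epsilon_{i,k}\|$ yields exactly the asserted inequality.

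Since every ingredient (triangle inequality, nonexpansiveness, algebraic rearrangement of the update) is elementary, there is no real obstacle. The only point that requires a touch of care is choosing to split at $F_i(\hat{x}_{i,k})$ rather than, say, at $\hat{x}_{i,k}$: splitting at $F_i(\hat{x}_{i,k})$ is what makes the nonexpansive bound pair cleanly with the explicit formula for $F_i(\hat{x}_{i,k})-x_{i,k+1}$, so that the coefficients of $\|F_i(\hat{x}_{i,k})-\hat{x}_{i,k}\|$ sum exactly to one.
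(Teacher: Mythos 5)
Your proof is correct and follows essentially the same route as the paper: insert $F_i(\hat{x}_{i,k})$ via the triangle inequality, bound $\|F_i(x_{i,k+1})-F_i(\hat{x}_{i,k})\|$ by nonexpansiveness, and then use the update (\ref{8}) to bound both $\|x_{i,k+1}-\hat{x}_{i,k}\|$ and $\|F_i(\hat{x}_{i,k})-x_{i,k+1}\|$, with the coefficients summing to $1$ and $2\alpha_{i,k}$. The only difference is organizational: the paper carries this out as a single chain of inequalities, while you separate the two auxiliary bounds first.
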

\begin{proof}
In view of (\ref{8}), it can be asserted that for $i\in[N]$
\begin{align}
&\|F_i(x_{i,k+1})-x_{i,k+1}\|          \nonumber\\
&=\|F_i(x_{i,k+1})-F_i(\hat{x}_{i,k})-(1-\alpha_{i,k})\hat{x}_{i,k}    \nonumber\\
&\hspace{0.4cm}+(1-\alpha_{i,k})F_i(\hat{x}_{i,k})-\alpha_{i,k}\epsilon_{i,k}\|     \nonumber\\
&\leq \|F_i(x_{i,k+1})-F_i(\hat{x}_{i,k})\|+(1-\alpha_{i,k})\|F_i(\hat{x}_{i,k})-\hat{x}_{i,k}\|    \nonumber\\
&\hspace{0.4cm}+\alpha_{i,k}\|\epsilon_{i,k}\|         \nonumber\\
&\leq \|x_{i,k+1}-\hat{x}_{i,k}\|+(1-\alpha_{i,k})\|F_i(\hat{x}_{i,k})-\hat{x}_{i,k}\|    \nonumber\\
&\hspace{0.4cm}+\alpha_{i,k}\|\epsilon_{i,k}\|         \nonumber\\
&\leq \|F_i(\hat{x}_{i,k})-\hat{x}_{i,k}\|+2\alpha_{i,k}\|\epsilon_{i,k}\|,       \nonumber
\end{align}
where the second inequality has made use of the nonexpansive property of $F_i$, and the last inequality has utilized the iteration (\ref{8}).
\end{proof}

With the above lemmas at hand, we are now ready to prove Theorems \ref{t1}-\ref{t3}.

{\em Proof of Theorem \ref{t1}:} Invoking (\ref{11}), the iteration (\ref{8}) can be rewritten as
\begin{align}
x_{i,k+1}=M_{i,k}(\hat{x}_{i,k})+\alpha_{i,k}\epsilon_{i,k}.        \label{t1.1}
\end{align}
Then, for any $x^*\in X^*$, which must satisfy $M_{i,k}(x^*)=x^*$ for all $i\in[N]$ and $k\in\mathbb{N}$, it can be obtained from (\ref{t1.1}) that for all $i\in[N]$
\begin{align}
\|x_{i,k+1}-x^*\|&=\|M_{i,k}(\hat{x}_{i,k})-M_{i,k}(x^*)+\alpha_{i,k}\epsilon_{i,k}\|      \nonumber\\
&\leq \|M_{i,k}(\hat{x}_{i,k})-M_{i,k}(x^*)\|+\alpha_{i,k}\|\epsilon_{i,k}\|      \nonumber\\
&\leq \|\hat{x}_{i,k}-x^*\|+\alpha_{i,k}\|\epsilon_{i,k}\|      \nonumber\\
&\leq \sum_{j=1}^N a_{ij,k}\|x_{j,k}-x^*\|+\alpha_{i,k}\|\epsilon_{i,k}\|,      \label{t1.2}
\end{align}
where the second inequality follows from the nonexpansive property of $M_{i,k}$ because $F_i$ is nonexpansive, and the last inequality is due to the convexity of $\|\cdot\|$ and $\sum_{j=1}^N a_{ij,k}=1$ for all $i\in[N]$, see Assumption \ref{a1}.

Multiplying $\pi_{i,k+1}$ on both sides of (\ref{t1.2}) and summing over $i\in[N]$ yield that
\begin{align}
\sum_{i=1}^N\pi_{i,k+1}\|x_{i,k+1}-x^*\|&\leq \sum_{j=1}^N \pi_{j,k}\|x_{j,k}-x^*\|     \nonumber\\
&\hspace{0.4cm}+\sum_{i=1}^N\pi_{i,k+1}\alpha_{i,k}\|\epsilon_{i,k}\|,      \label{t1.3}
\end{align}
where we have employed $\pi_k^\top=\pi_{k+1}^\top A_k$ in Lemma \ref{l1}. Note that $\pi_{i,k+1}\leq 1$, $\alpha_{i,k}\leq [\alpha,1-\alpha]$, and $\{\|\epsilon_{i,k}\|\}_{k\in\mathbb{N}}\in\ell_+^1$. Applying Lemma \ref{l2} results in that $\sum_{j=1}^N \pi_{j,k}\|x_{j,k}-x^*\|$ is bounded and thus so is $x_{i,k}$ for all $i\in[N]$ and $k\in\mathbb{N}$ because of $\pi_{i,k}\geq\underline{\pi}>0$ by Lemma \ref{l1}.

Subsequently, let us denote
\begin{align}
\theta_1:=\sup_{k\in\mathbb{N},i\in[N]}(2\|M_{i,k}(\hat{x}_{i,k})-x^*\|+\alpha_{i,k}\|\epsilon_{i,k}\|).     \label{t1.4}
\end{align}
Then, in view of (\ref{8}), one can obtain that
\begin{align}
&\|x_{i,k+1}-x^*\|^2       \nonumber\\
&=\|\hat{x}_{i,k}-x^*+\alpha_{i,k}(F_i(\hat{x}_{i,k})-\hat{x}_{i,k})+\alpha_{i,k}\epsilon_{i,k}\|^2      \nonumber\\
&\leq \|\hat{x}_{i,k}-x^*+\alpha_{i,k}(F_i(\hat{x}_{i,k})-\hat{x}_{i,k})\|^2+\alpha_{i,k}\theta_1\|\epsilon_{i,k}\|      \nonumber\\
&=\|\hat{x}_{i,k}-x^*\|^2+2\alpha_{i,k}\langle\hat{x}_{i,k}-x^*,F_i(\hat{x}_{i,k})-\hat{x}_{i,k}\rangle        \nonumber\\ &\hspace{0.4cm}+\alpha_{i,k}^2\|F_i(\hat{x}_{i,k})-\hat{x}_{i,k}\|^2+\alpha_{i,k}\theta_1\|\epsilon_{i,k}\|,     \label{t1.5}
\end{align}
which, together with Lemma \ref{l4} and the convexity of the norm $\|\cdot\|^2$, implies that
\begin{align}
&\|x_{i,k+1}-x^*\|^2       \nonumber\\
&\leq\sum_{j=1}^Na_{ij,k}\|x_{j,k}-x^*\|^2-\alpha_{i,k}\|F_i(\hat{x}_{i,k})-\hat{x}_{i,k}\|^2                  \nonumber\\ &\hspace{0.4cm}+\alpha_{i,k}^2\|F_i(\hat{x}_{i,k})-\hat{x}_{i,k}\|^2+\alpha_{i,k}\theta_1\|\epsilon_{i,k}\|.     \label{t1.6}
\end{align}
By multiplying $\pi_{i,k+1}$ on both sides of (\ref{t1.6}) and summing over $i\in[N]$, it can be concluded that
\begin{align}
&\sum_{i=1}^N\pi_{i,k+1}\|x_{i,k+1}-x^*\|^2       \nonumber\\
&\leq\sum_{j=1}^N\pi_{j,k}\|x_{j,k}-x^*\|^2+\theta_1\sum_{i=1}^N\pi_{i,k+1}\alpha_{i,k}\|\epsilon_{i,k}\|                  \nonumber\\ &\hspace{0.4cm}-\sum_{i=1}^N\pi_{i,k+1}\alpha_{i,k}(1-\alpha_{i,k})\|F_i(\hat{x}_{i,k})-\hat{x}_{i,k}\|^2                 \nonumber\\
&\leq\sum_{j=1}^N\pi_{j,k}\|x_{j,k}-x^*\|^2+\theta_1(1-\alpha)\sum_{i=1}^N\|\epsilon_{i,k}\|                  \nonumber\\ &\hspace{0.4cm}-\underline{\pi}\alpha(1-\alpha)\sum_{i=1}^N\|F_i(\hat{x}_{i,k})-\hat{x}_{i,k}\|^2,       \label{t1.7}
\end{align}
where we have resorted to the facts that $\alpha_{i,k}\in[\alpha,1-\alpha]$, $\alpha_{i,k}(1-\alpha_{i,k})\geq \alpha(1-\alpha)$, and $\pi_{i,k}\in[\underline{\pi},1]$ for all $i\in[N]$ and $k\in\mathbb{N}$.

Now, summing (\ref{t1.7}) over $k\in\mathbb{N}$ gives rise to
\begin{align}
&\underline{\pi}\alpha(1-\alpha)\sum_{k=0}^\infty\sum_{i=1}^N\|F_i(\hat{x}_{i,k})-\hat{x}_{i,k}\|^2          \nonumber\\
&\leq \sum_{i=1}^N\pi_{i,0}\|x_{i,0}-x^*\|^2+\theta_1(1-\alpha)\sum_{k=0}^\infty\sum_{i=1}^N\|\epsilon_{i,k}\|,        \label{t1.8}
\end{align}
which, together with $\{\|\epsilon_{i,k}\|\}_{k\in\mathbb{N}}\in\ell_+^1$, yields that
\begin{align}
\sum_{k=0}^\infty\sum_{i=1}^N\|F_i(\hat{x}_{i,k})-\hat{x}_{i,k}\|^2<\infty,        \label{t1.9}
\end{align}
further leading to
\begin{align}
\|F_i(\hat{x}_{i,k})-\hat{x}_{i,k}\|\to 0,~\text{as}~k\to\infty.        \label{t1.10}
\end{align}

With the above preparations, we are ready to prove that $x_{i,k}$'s will reach agreement for all agents $i\in[N]$. To see this, the iteration (\ref{8}) can be written in a compact form
\begin{align}
x_{k+1}=(A_k\otimes Id)x_k+\bar{\varepsilon}_k,         \label{t1.11}
\end{align}
where $x_k:=col(x_{1,k},\ldots,x_{N,k})$, $\bar{\varepsilon}_k:=col(\alpha_{1,k}(F_1(\hat{x}_{1,k})-\hat{x}_{1,k}),\ldots,\alpha_{N,k}(F_N(\hat{x}_{N,k})-\hat{x}_{N,k}))+\varepsilon_k$, and $\varepsilon_k:=col(\alpha_{1,k}\epsilon_{1,k},\ldots,\alpha_{N,k}\epsilon_{N,k})$.

Invoking (\ref{t1.10}) and $\|\epsilon_{i,k}\|\to 0$ because of $\{\|\epsilon_{i,k}\|\}_{k\in\mathbb{N}}\in\ell_+^1$, one readily obtains that $\bar{\varepsilon}_k\to 0$. With reference to (\ref{t1.11}), by applying the same arguments as that of Lemmas 3 and 4 in \cite{xie2018distributed} and using Lemma \ref{l3}, one has that
\begin{align}
\|x_{i,k}-\bar{x}_k\|\to 0,~\text{as}~k\to\infty,~~\forall i\in[N]          \label{t1.12}
\end{align}
where $\bar{x}_k:=\sum_{i=1}^N\pi_{i,k}x_{i,k}$ is viewed as a weighted average of $x_{i,k}$'s.

We next show the weak convergence of (\ref{8}). Bearing in mind that $\{\|\epsilon_{i,k}\|\}_{k\in\mathbb{N}}\in\ell_+^1$, it can be obtained by (\ref{t1.7}) and Lemma \ref{l2} that $\sum_{i=1}^N\pi_{i,k}\|x_{i,k}-x^*\|^2$ converges. In the meantime, one has that
\begin{align}
&\sum_{i=1}^N\pi_{i,k}\|x_{i,k}-x^*\|^2          \nonumber\\
&=\sum_{i=1}^N\pi_{i,k}\|x_{i,k}-\bar{x}_k+\bar{x}_k-x^*\|^2       \nonumber\\
&=\sum_{i=1}^N\pi_{i,k}\|x_{i,k}-\bar{x}_k\|^2+2\sum_{i=1}^N \pi_{i,k}\langle x_{i,k}-\bar{x}_k,\bar{x}_k-x^*\rangle     \nonumber\\
&\hspace{0.4cm}+\|\bar{x}_k-x^*\|^2,         \nonumber
\end{align}
which yields that
\begin{align}
\|\bar{x}_k-x^*\|~\text{converges},         \label{t1.13}
\end{align}
since $\sum_{i=1}^N\pi_{i,k}\|x_{i,k}-x^*\|^2$ converges, $\|x_{i,k}-\bar{x}_k\|\to 0$ (see (\ref{t1.12})), and $|\langle x_{i,k}-\bar{x}_k,\bar{x}_k-x^*\rangle|\leq \|x_{i,k}-\bar{x}_k\|\cdot\|\bar{x}_k-x^*\|\to 0$ by Cauchy-Schwarz inequality.

On the other hand, by resorting to Lemma \ref{l5} and (\ref{t1.10}) along with $\|\epsilon_{i,k}\|\to 0$ for all $i\in[N]$, one has that
\begin{align}
\|F_i(x_{i,k})-x_{i,k}\|\to 0,~~~\forall i\in[N]     \label{t1.14}
\end{align}
which, in tandem with (\ref{t1.12}), gives rise to that for all $i\in[N]$
\begin{align}
\|F_i(\bar{x}_k)-\bar{x}_k\|&\leq\|F_i(\bar{x}_k)-F_i(x_{i,k})\|+\|F_i(x_{i,k})-x_{i,k}\|     \nonumber\\
&\hspace{0.4cm}+\|x_{i,k}-\bar{x}_k\|                      \nonumber\\
&\leq 2\|x_{i,k}-\bar{x}_k\|+\|F_i(x_{i,k})-x_{i,k}\|         \nonumber\\
&\to 0,          \label{t1.15}
\end{align}
where the nonexpansiveness of $F_i$ is employed in the second inequality.

Now, for arbitrary sequential cluster point $x_c$ of $\{\bar{x}_k\}_{k\in\mathbb{N}}$, i.e., $\bar{x}_{k_l}\rightharpoonup x_c$, in view of (\ref{t1.15}), invoking Corollary 4.28 in \cite{bauschke2017convex} yields that $x_c\in Fix(F_i)$ for all $i\in[N]$, i.e., $x_c\in X^*$, Then, in light of Lemma 2.47 in \cite{bauschke2017convex} and (\ref{t1.13}), it can be asserted that $\bar{x}_k$ converges weakly to a point in $X^*$, say $\bar{x}_k\rightharpoonup x'$.

Consequently, the weak convergence of $x_{i,k}$'s to a common point in $X^*$ can be ensured once noting the fact that for all $x\in \mathcal{H}$ and all $i\in[N]$
\begin{align}
\langle x_{i,k}-x',x\rangle &=\langle x_{i,k}-\bar{x}_k,x\rangle+\langle\bar{x}_k-x',x\rangle       \nonumber\\
&\leq \|x_{i,k}-\bar{x}_k\|\cdot\|x\|+\langle\bar{x}_k-x',x\rangle              \nonumber\\
&\to 0,               \label{t1.16}
\end{align}
where the inequality has employed Cauchy-Schwarz inequality.

It remains to show the convergence rate (\ref{12}). Let us prove it by contradiction. If there are no subsequences such that (\ref{12}) holds, then there must exist $k_0\in\mathbb{N}$, $C>0$, and $i_0\in[N]$, such that for all $k\geq k_0$
\begin{align}
\|F_{i_0}(x_{i_0,k})-x_{i_0,k}\|\geq \frac{C}{\sqrt{k}}.       \label{t1.17}
\end{align}
On the other hand, in view of Lemma \ref{l5}, it follows that
\begin{align}
&\|F_i(\hat{x}_{i,k})-\hat{x}_{i,k}\|^2       \nonumber\\
&\geq \frac{\|F_i(x_{i,k+1})-x_{i,k+1}\|^2}{2}-4\alpha_{i,k}^2\|\epsilon_{i,k}\|^2        \label{t1.0}
\end{align}
by using $(a+b)^2\leq 2(a^2+b^2)$ for scalars $a,b\geq 0$, which in combination with (\ref{t1.17}) results in that
\begin{align}
&\sum_{k=0}^\infty\sum_{i=1}^N\|F_i(\hat{x}_{i,k})-\hat{x}_{i,k}\|^2            \nonumber\\
&\geq\frac{1}{2}\sum_{k=0}^\infty\sum_{i=1}^N\|F_i(x_{i,k+1})-x_{i,k+1}\|^2-4\sum_{k=0}^\infty\sum_{i=1}^N\alpha_{i,k}^2\|\epsilon_{i,k}\|^2           \nonumber\\
&\geq\frac{1}{2}\sum_{k=k_0}^\infty\|F_{i_0}(x_{i_0,k})-x_{i_0,k}\|^2-4(1-\alpha)^2\sum_{k=0}^\infty\sum_{i=1}^N\|\epsilon_{i,k}\|^2           \nonumber\\
&\geq \frac{C^2}{2}\sum_{k=k_0}^\infty \frac{1}{k}-4(1-\alpha)^2\sum_{i=1}^N\Big(\sum_{k=0}^\infty\|\epsilon_{i,k}\|\Big)^2                 \nonumber\\
&=\infty,              \label{t1.18}
\end{align}
where the last inequality has made use of (\ref{t1.17}). It is apparent that (\ref{t1.18}) contradicts (\ref{t1.9}). Therefore, one can claim that (\ref{12}) holds. This ends the proof of Theorem \ref{t1}.
\hfill\rule{2mm}{2mm}

{\em Proof of Theorem \ref{t2}:} Define $p_{i,k}=\sum_{j=1}^N a_{ij,k}P_{X^*}(x_{j,k})$, and let
\begin{align}
\theta_2&:=\sup_{k\in\mathbb{N},i\in[N]}\{2\|M_{i,k}(\hat{x}_{i,k})-p_{i,k}\|+\alpha_{i,k}\|\epsilon_{i,k}\|\},     \label{t2.1}\\
\theta_3&:=\theta_2+\sup_{k\in\mathbb{N},i\in[N]}\{4\alpha_{i,k}^2(1-\alpha_{i,k})\|\epsilon_{i,k}\|\}.             \label{t2.2}
\end{align}
Invoking (\ref{8}), it can be concluded that
\begin{align}
&d_{X^*}^2(x_{i,k+1})         \nonumber\\
&\leq\|x_{i,k+1}-p_{i,k}\|^2       \nonumber\\
&=\|\hat{x}_{i,k}-p_{i,k}+\alpha_{i,k}(F_i(\hat{x}_{i,k})-\hat{x}_{i,k})+\alpha_{i,k}\epsilon_{i,k}\|^2      \nonumber\\
&\leq \sum_{j=1}^N a_{ij,k}d_{X^*}^2(x_{j,k})-\alpha_{i,k}(1-\alpha_{i,k})\|F_i(\hat{x}_{i,k})-\hat{x}_{i,k}\|^2      \nonumber\\
&\hspace{0.4cm}+\theta_2\alpha_{i,k}\|\epsilon_{i,k}\|,      \label{t2.3}
\end{align}
where the second inequality has adopted the same reasoning as in (\ref{t1.5}) and (\ref{t1.6}). Substituting (\ref{t1.0}) in (\ref{t2.3}), one has that
\begin{align}
&d_{X^*}^2(x_{i,k+1})         \nonumber\\
&\leq \sum_{j=1}^N a_{ij,k}d_{X^*}^2(x_{j,k})-\frac{\alpha(1-\alpha)}{2}\|F_i(x_{i,k+1})-x_{i,k+1}\|^2      \nonumber\\
&\hspace{0.4cm}+\theta_3\alpha_{i,k}\|\epsilon_{i,k}\|,      \label{t2.4}
\end{align}
where we have utilized the fact that $\alpha_{i,k}(1-\alpha_{i,k})\geq \alpha(1-\alpha)$ for $\alpha_{i,k}\in[\alpha,1-\alpha]$.

By multiplying $\pi_{i,k+1}$ on both sides of (\ref{t2.4}) and summing over $i\in[N]$, one has that
\begin{align}
&\sum_{i=1}^N\pi_{i,k+1}d_{X^*}^2(x_{i,k+1})         \nonumber\\
&\leq \sum_{j=1}^N \pi_{j,k}d_{X^*}^2(x_{j,k})+\theta_3(1-\alpha)\sum_{i=1}^N\|\epsilon_{i,k}\|      \nonumber\\
&\hspace{0.4cm}-\frac{\underline{\pi}\alpha(1-\alpha)}{2}\sum_{i=1}^N\|F_i(x_{i,k+1})-x_{i,k+1}\|^2,      \label{t2.5}
\end{align}
where the facts that $\pi_k^\top=\pi_{k+1}^\top A_k$ and $\pi_{i,k}\in[\underline{\pi},1]$ in Lemma \ref{l1} have been utilized.

To proceed, it is helpful to establish a relationship between $\|F_i(x_{i,k+1})-x_{i,k+1}\|^2$ and $\|F_i(\bar{x}_{k+1})-\bar{x}_{k+1}\|$. Specifically, it can be deduced that
\begin{align}
&\|F_i(x_{i,k+1})-x_{i,k+1}\|^2         \nonumber\\
&=\|F_i(\bar{x}_{k+1})-\bar{x}_{k+1}+F_i(x_{i,k+1})-F_i(\bar{x}_{k+1})        \nonumber\\
&\hspace{0.4cm}+\bar{x}_{k+1}-x_{i,k+1}\|^2      \nonumber\\
&\geq \big(\|F_i(\bar{x}_{k+1})-\bar{x}_{k+1}\|-\|F_i(x_{i,k+1})-F_i(\bar{x}_{k+1})        \nonumber\\
&\hspace{0.4cm}+\bar{x}_{k+1}-x_{i,k+1}\|\big)^2             \nonumber\\
&\geq \frac{1}{2}\|F_i(\bar{x}_{k+1})-\bar{x}_{k+1}\|^2-\|F_i(x_{i,k+1})-F_i(\bar{x}_{k+1})        \nonumber\\
&\hspace{0.4cm}+\bar{x}_{k+1}-x_{i,k+1}\|^2,           \label{t2.6}
\end{align}
where the last inequality has used the fact that
\begin{align}
(a-b)^2\geq \frac{a^2}{2}-b^2           \label{t2.7}
\end{align}
for two scalars $a,b\geq 0$. Moreover, it is easy to get that
\begin{align}
&\|F_i(x_{i,k+1})-F_i(\bar{x}_{k+1})+\bar{x}_{k+1}-x_{i,k+1}\|^2      \nonumber\\
&\leq 2(\|F_i(x_{i,k+1})-F_i(\bar{x}_{k+1})\|^2+\|\bar{x}_{k+1}-x_{i,k+1}\|^2)      \nonumber\\
&\leq 4\|x_{i,k+1}-\bar{x}_{k+1}\|^2,              \label{t2.8}
\end{align}
where the last inequality has leveraged the nonexpansive property of $F_i$. Now, inserting (\ref{t2.8}) into (\ref{t2.6}) yields that
\begin{align}
&\|F_i(x_{i,k+1})-x_{i,k+1}\|^2         \nonumber\\
&\geq \frac{1}{2}\|F_i(\bar{x}_{k+1})-\bar{x}_{k+1}\|^2-4\|x_{i,k+1}-\bar{x}_{k+1}\|^2,           \label{t2.9}
\end{align}

At this point, turning our attention back to (\ref{t2.5}), invoking (\ref{t2.9}) leads to that
\begin{align}
&\sum_{i=1}^N\pi_{i,k+1}d_{X^*}^2(x_{i,k+1})         \nonumber\\
&\leq \sum_{j=1}^N \pi_{j,k}d_{X^*}^2(x_{j,k})+\theta_3(1-\alpha)\sum_{i=1}^N\|\epsilon_{i,k}\|      \nonumber\\
&\hspace{0.4cm}-\frac{\underline{\pi}\alpha(1-\alpha)}{4}\sum_{i=1}^N\|F_i(\bar{x}_{k+1})-\bar{x}_{k+1}\|^2       \nonumber\\
&\hspace{0.4cm}+2\underline{\pi}\alpha(1-\alpha)\sum_{i=1}^N\|x_{i,k+1}-\bar{x}_{k+1}\|^2.      \label{t2.10}
\end{align}

Consider the term $\sum_{i=1}^N\|F_i(\bar{x}_{k+1})-\bar{x}_{k+1}\|^2$ in (\ref{t2.10}), in light of (\ref{15}), one can obtain that
\begin{align}
\sum_{i=1}^N\|F_i(\bar{x}_{k+1})-\bar{x}_{k+1}\|^2&\geq \frac{1}{\kappa_c^2}\sum_{i=1}^N d_{X_i}^2(\bar{x}_{k+1})         \nonumber\\
&\geq \frac{1}{\kappa_c^2}\max_{i\in[N]} d_{X_i}^2(\bar{x}_{k+1})       \nonumber\\
&\geq \frac{1}{\kappa_c^2\kappa_0^2} d_{X^*}^2(\bar{x}_{k+1}),         \label{t2.11}
\end{align}
where the last inequality is due to (\ref{16}). Consider further the term $d_{X^*}^2(\bar{x}_{k+1})$ in (\ref{t2.11}), one has that
\begin{align}
d_{X^*}^2(\bar{x}_{k+1})&=\|\bar{x}_{k+1}-P_{X^*}(\bar{x}_{k+1})\|^2       \nonumber\\
&=\|x_{i,k+1}-P_{X^*}(\bar{x}_{k+1})+\bar{x}_{k+1}-x_{i,k+1}\|^2          \nonumber\\
&\geq \big(\|x_{i,k+1}-P_{X^*}(\bar{x}_{k+1})\|-\|\bar{x}_{k+1}-x_{i,k+1}\|\big)^2      \nonumber\\
&\geq \frac{1}{2}d_{X^*}^2(x_{i,k+1})-\|x_{i,k+1}-\bar{x}_{k+1}\|^2,         \label{t2.12}
\end{align}
where the last inequality has leveraged (\ref{t2.7}). Summing over $i\in[N]$ for (\ref{t2.12}) yields that
\begin{align}
&d_{X^*}^2(\bar{x}_{k+1})   \nonumber\\
&\geq \frac{1}{2N}\sum_{i=1}^Nd_{X^*}^2(x_{i,k+1})-\frac{1}{N}\sum_{i=1}^N\|x_{i,k+1}-\bar{x}_{k+1}\|^2.         \label{t2.13}
\end{align}

Substituting (\ref{t2.11}) and (\ref{t2.13}) into (\ref{t2.10}) gives rise to that
\begin{align}
&\sum_{i=1}^N\pi_{i,k+1}d_{X^*}^2(x_{i,k+1})         \nonumber\\
&\leq \sum_{i=1}^N \pi_{i,k}d_{X^*}^2(x_{i,k})-\frac{\underline{\pi}\alpha(1-\alpha)}{8N\kappa_c^2\kappa_0^2}\sum_{i=1}^N d_{X^*}^2(x_{i,k+1})      \nonumber\\
&\hspace{0.4cm}+\gamma_1\sum_{i=1}^N\|x_{i,k+1}-\bar{x}_{k+1}\|^2       \nonumber\\
&\hspace{0.4cm}+\theta_3(1-\alpha)\sum_{i=1}^N\|\epsilon_{i,k}\|,      \label{t2.14}
\end{align}
where
\begin{align}
\gamma_1:=\underline{\pi}\alpha(1-\alpha)\Big(2+\frac{1}{4N\kappa_c^2\kappa_0^2}\Big).            \label{t2.15}
\end{align}

For notation simplicity, let
\begin{align}
d_k^2&:=\sum_{i=1}^N \pi_{i,k}d_{X^*}^2(x_{i,k}),~~~\forall k\in\mathbb{N}        \label{t2.16}\\
\beta&:=1+\frac{\underline{\pi}\alpha(1-\alpha)}{8N\kappa_c^2\kappa_0^2}.          \label{t2.17}
\end{align}

Then, (\ref{t2.14}) can be written as
\begin{align}
\beta d_{k+1}^2&\leq d_k^2+\gamma_1\sum_{i=1}^N\|x_{i,k+1}-\bar{x}_{k+1}\|^2       \nonumber\\
&\hspace{0.4cm}+\theta_3(1-\alpha)\sum_{i=1}^N\|\epsilon_{i,k}\|.      \label{t2.18}
\end{align}

Consider now the term $\gamma_1\sum_{i=1}^N\|x_{i,k+1}-\bar{x}_{k+1}\|^2$. Recalling $p_{i,k}:=\sum_{j=1}^Na_{ij,k}P_{X^*}(x_{j,k})$, one can conclude that
\begin{align}
\|F_i(\hat{x}_{i,k})-\hat{x}_{i,k}\|^2&\leq \|F_i(\hat{x}_{i,k})-p_{i,k}+p_{i,k}-\hat{x}_{i,k}\|^2         \nonumber\\
&\leq 2\|F_i(\hat{x}_{i,k})-p_{i,k}\|^2+2\|\hat{x}_{i,k}-p_{i,k}\|^2          \nonumber\\
&\leq 4\|\hat{x}_{i,k}-p_{i,k}\|^2         \nonumber\\
&\leq 4\sum_{j=1}^N a_{ij,k} d_{X^*}^2(x_{j,k}),           \label{t2.19}
\end{align}
where the third inequality has employed the fact that $F_i(p_{i,k})=p_{i,k}$ and $F_i$ is nonexpansive, and the last inequality has used the convexity of $\|\cdot\|^2$. Subsequently, by multiplying $\pi_{i,k+1}$ on both sides of (\ref{t2.19}) and summing over $i\in[N]$, it follows that
\begin{align}
\sum_{i=1}^N \pi_{i,k+1}\|F_i(\hat{x}_{i,k})-\hat{x}_{i,k}\|^2\leq 4\sum_{j=1}^N \pi_{j,k}d_{X^*}^2(x_{j,k}),        \label{t2.20}
\end{align}
where $\pi_k^\top=\pi_{k+1}^\top A_k$ in Lemma \ref{l1} has been applied in the inequality. Combining (\ref{t2.20}) with the fact that $\sum_{i=1}^N \pi_{i,k+1}\|F_i(\hat{x}_{i,k})-\hat{x}_{i,k}\|^2\geq \underline{\pi}\sum_{i=1}^N\|F_i(\hat{x}_{i,k})-\hat{x}_{i,k}\|^2$ implies that
\begin{align}
\sum_{i=1}^N \|F_i(\hat{x}_{i,k})-\hat{x}_{i,k}\|^2\leq \frac{4}{\underline{\pi}}d_k^2.         \label{t2.21}
\end{align}

Bearing in mind the definition of $\bar{\varepsilon}_k$ in (\ref{t1.11}), it follows from (\ref{t2.21}) that
\begin{align}
\|\bar{\varepsilon}_k\|^2\leq\frac{8\alpha_c^2}{\underline{\pi}}d_k^2+2\|\varepsilon_k\|^2,       \label{t2.22}
\end{align}
where $\alpha_c=\max_{i\in[N],k\in\mathbb{N}}\alpha_{i,k}$. In view of (\ref{t2.22}), following the same arguments as that of Lemmas 3 and 4 in \cite{xie2018distributed} for (\ref{t1.11}), one can conclude that
\begin{align}
&\|x_{i,k+1}-\bar{x}_{k+1}\|        \nonumber\\
&\leq N\varpi\xi^{k+1}\|x_0-\bar{x}_0\|+\frac{N\varpi\xi\xi^{\lceil \frac{k+1}{2}\rceil}}{1-\xi}\sup_{l\in\mathbb{N}}\|\bar{\varepsilon}_l\|          \nonumber\\
&\hspace{0.4cm}+\frac{N\varpi\xi}{1-\xi}\sup_{\lfloor\frac{k+1}{2}\rfloor\leq l\leq k}\|\bar{\varepsilon}_l\|,        \label{t2.23}
\end{align}
which further gives rise to
\begin{align}
&\|x_{i,k+1}-\bar{x}_{k+1}\|^2      \nonumber\\
&\leq\frac{3N^2\varpi^2\xi^{k+3}}{(1-\xi)^2}\sup_{l\in\mathbb{N}}\|\bar{\varepsilon}_l\|^2+\frac{3N^2\varpi^2\xi^2}{(1-\xi)^2}\sup_{\lfloor\frac{k+1}{2}\rfloor\leq l\leq k}\|\bar{\varepsilon}_l\|^2   \nonumber\\
&\hspace{0.4cm} +3N^2\varpi^2\xi^{2(k+1)}\|x_0-\bar{x}_0\|^2        \nonumber\\
&\leq \frac{3N^2\varpi^2\xi^{k+3}}{(1-\xi)^2}\sup_{l\in\mathbb{N}}\|\bar{\varepsilon}_l\|^2+\frac{24N^2\alpha_c^2\varpi^2\xi^2}{\underline{\pi}(1-\xi)^2}\sup_{\lfloor\frac{k+1}{2}\rfloor\leq l\leq k}d_l^2       \nonumber\\
&\hspace{0.4cm}+\frac{6N^2\varpi^2\xi^2}{(1-\xi)^2}\sup_{\lfloor\frac{k+1}{2}\rfloor\leq l\leq k}\|\varepsilon_l\|^2        \nonumber\\
&\hspace{0.4cm} +3N^2\varpi^2\xi^{2(k+1)}\|x_0-\bar{x}_0\|^2,        \label{t2.24}
\end{align}
where (\ref{t2.22}) has been utilized in the second inequality.

Inserting (\ref{t2.24}) into (\ref{t2.18}), it can be then obtained that
\begin{align}
\beta d_{k+1}^2\leq d_k^2 +\gamma_2\alpha(1-\alpha)\alpha_c^2\sup_{\lfloor\frac{k+1}{2}\rfloor\leq l\leq k}d_l^2+\beta e_k,      \label{t2.25}
\end{align}
where $\gamma_2$ is defined in (\ref{18}) and
\begin{align}
e_k&:=\frac{1}{\beta}\bigg(3\gamma_1N^3\varpi^2\xi^{2(k+1)}\|x_0-\bar{x}_0\|^2         \nonumber\\
&\hspace{1.1cm}+\frac{3\gamma_1N^3\varpi^2\xi^{k+3}}{(1-\xi)^2}\sup_{l\in\mathbb{N}}\|\bar{\varepsilon}_l\|^2     \nonumber\\
&\hspace{1.1cm}+\frac{6\gamma_1N^3\varpi^2\xi^2}{(1-\xi)^2}\sup_{\lfloor\frac{k+1}{2}\rfloor\leq l\leq k}\|\varepsilon_l\|^2\bigg).       \label{t2.26}
\end{align}

Because of $d_k^2\leq \sup_{\lfloor\frac{k+1}{2}\rfloor\leq l\leq k}d_l^2$, it follows from (\ref{t2.25}) that
\begin{align}
\beta d_{k+1}^2\leq [1+\gamma_2\alpha(1-\alpha)\alpha_c^2]\sup_{\lfloor\frac{k+1}{2}\rfloor\leq l\leq k}d_l^2+\beta e_k,      \label{t2.27}
\end{align}
further implying that
\begin{align}
d_{k+1}^2\leq \gamma\sup_{\lfloor\frac{k+1}{2}\rfloor\leq l\leq k}d_l^2+e_k,      \label{t2.28}
\end{align}
where
\begin{align}
\gamma:=\frac{1+\gamma_2\alpha(1-\alpha)\alpha_c^2}{\beta}.           \label{t2.29}
\end{align}
It is easy to verify that $\gamma<1$ under condition (\ref{17}). Note that there exists $m\in\mathbb{N}\backslash\{0\}$ such that
\begin{align}
k+1\in[2^{m-1},2^m-1].        \label{o1}
\end{align}
Then, by iteratively applying (\ref{t2.28}), one can conclude that
\begin{align}
d_{k+1}^2\leq \gamma^m d_0^2+\sum_{j=0}^{m-1}\gamma^j e_{k_j-1},     \label{t2.30}
\end{align}
where
\begin{align}
\lfloor\frac{k_j}{2}\rfloor\leq k_{j+1}\leq k_j-1,~j=1,\ldots,m-1       \label{o2}
\end{align}
with $k_m=0$ and $k_0=k+1$.

Meanwhile, it can be obtained that
\begin{align}
&\sum_{j=0}^{m-1}\gamma^j e_{k_j-1}       \nonumber\\
&=\sum_{j=0}^{\lfloor \frac{m-1}{2}\rfloor}\gamma^j e_{k_j-1}+\sum_{j=\lfloor \frac{m-1}{2}\rfloor+1}^{m-1}\gamma^j e_{k_j-1}     \nonumber\\
&=\sum_{j=0}^{\lfloor \frac{m-1}{2}\rfloor}\gamma^j e_{k_j-1}+\gamma^{\lfloor \frac{m-1}{2}\rfloor}\sum_{j=1}^{\lceil \frac{m-1}{2}\rceil}\gamma^j e_{k_{\lfloor \frac{m-1}{2}\rfloor+j}-1}   \nonumber\\
&\leq \frac{1}{1-\gamma}\sup_{l\geq k_{\lfloor \frac{m-1}{2}\rfloor}}e_{l-1}+\gamma^{\lfloor \frac{m-1}{2}\rfloor}\frac{\gamma}{1-\gamma}\sup_{l\geq 0} e_l,    \label{t2.31}
\end{align}
which, together with (\ref{t2.30}), yields that
\begin{align}
d_{k+1}^2&\leq \gamma^m d_0^2+\frac{1}{1-\gamma}\sup_{l\geq k_{\lfloor \frac{m-1}{2}\rfloor}}e_{l-1}     \nonumber\\
&\hspace{0.4cm}+\frac{\gamma\gamma^{\lfloor \frac{m-1}{2}\rfloor}}{1-\gamma}\sup_{l\geq 0} e_l.     \label{t2.32}
\end{align}

It is easy to see that $\|\varepsilon_l\|\to 0$ as $l\to\infty$ since so is $\|\epsilon_{i,l}\|$ due to $\{\epsilon_{i,l}\}_{l\in\mathbb{N}}\in\ell_+^1$ for all $i\in[N]$, and thus $e_l\to 0$ as $l\to \infty$. Moreover, it can be obtained from (\ref{o1}) that
\begin{align}
\log_2 (k+1)\leq m\leq \log_2(k+1)+1,         \label{o3}
\end{align}
which further implies that
\begin{align}
\frac{\log_2 (k+1)}{2}-\frac{3}{2}\leq \lfloor\frac{m-1}{2}\rfloor\leq \frac{\log_2(k+1)}{2}.         \label{o4}
\end{align}

On the other hand, invoking (\ref{o2}) yields that
\begin{align}
\frac{k+1}{2^{\lfloor\frac{m-1}{2}\rfloor}}-\sum_{l=1}^{\lfloor\frac{m-1}{2}\rfloor}\frac{1}{2^l}\leq k_{\lfloor\frac{m-1}{2}\rfloor}\leq k+1-\lfloor\frac{m-1}{2}\rfloor,    \nonumber
\end{align}
which, together with (\ref{o4}), leads to
\begin{align}
\sqrt{k+1}-1\leq k_{\lfloor\frac{m-1}{2}\rfloor}\leq k+1-\frac{\sqrt{k+1}}{2\sqrt{2}},    \label{o5}
\end{align}

By combining (\ref{t2.32})-(\ref{o5}), one can conclude that $d_k^2$ and thus $\|x_{i,k}-\bar{x}_k\|^2$ for all $i\in[N]$ (see (\ref{t2.24})) converge strongly to the origin, and converge at a rate of $O(\xi^{\ln k})$, i.e., $O(1/k^{\ln(1/\xi)})$, when $\epsilon_{i,k}\equiv 0$ for all $i\in[N]$ and $k\in \mathbb{N}$.

Finally, let
\begin{align}
q_k:=\sum_{i=1}^N\pi_{i,k}P_{X^*}(x_{i,k}).      \label{t2.33}
\end{align}
Then, applying the convexity of $\|\cdot\|^2$, it can be concluded that $\|\bar{x}_k -q_k\|^2\leq \sum_{i=1}^N\pi_{i,k}\|x_{i,k}-P_{X^*}(x_{i,k})\|^2=d_k^2$. Meanwhile, note that $\|x_{i,k}-q_k\|^2\leq 2\|x_{i,k}-\bar{x}_k\|^2+2\|\bar{x}_k-q_k\|^2$ and $q_k\in X^*$ for all $k\in\mathbb{N}$. Combining the above analysis completes the proof.
\hfill\rule{2mm}{2mm}

{\em Proof of Theorem \ref{t3}:} By Theorem \ref{t1}, it is known that all $x_{i,k}$'s are bounded. Therefore, according to the bounded power regularity of $\{F_i\}_{i\in[N]}$, one has that there exists a constant $\kappa_d\geq 0$ such that
\begin{align}
d_{X^*}(\bar{x}_{k+1})\leq \kappa_d\sum_{i=1}^N\|F_i(\bar{x}_{k+1})-\bar{x}_{k+1}\|,      \label{t3.1}
\end{align}
which leads to that
\begin{align}
d_{X^*}^2(\bar{x}_{k+1})\leq N\kappa_d^2\sum_{i=1}^N\|F_i(\bar{x}_{k+1})-\bar{x}_{k+1}\|^2.      \label{t3.2}
\end{align}

Note that (\ref{t3.2}) is consistent with (\ref{t2.11}) with different coefficients. Hence, following the same argument as that of Theorem \ref{t2}, the conclusions of this theorem can be asserted. The proof is thus completed.
\hfill\rule{2mm}{2mm}

\subsection{Proofs of Theorems \ref{t4} and \ref{t5}}\label{s5.2}

Let us first introduce several lemmas.

\begin{lemma}[\cite{robbins1971martin}]\label{l6}
Let $\mathfrak{F}=\{\mathcal{F}_k\}_{k\in\mathbb{N}}$ be a filtration. If $\{z_k\}_{k\in\mathbb{N}}\in\ell_+(\mathfrak{F})$, $\{\varsigma_k\}_{k\in\mathbb{N}}\in\ell_+^1(\mathfrak{F})$, $\{\vartheta_k\}_{k\in\mathbb{N}}\in\ell_+(\mathfrak{F})$, and $\{\eta_k\}_{k\in\mathbb{N}}\in\ell_+^1(\mathfrak{F})$ satisfy the following inequality a.s.:
\begin{align}
\mathbb{E}(z_{k+1}|\mathcal{F}_k)\leq (1+\varsigma_k)\|z_k\|-\vartheta_k+\eta_k,~~~\forall k\in\mathbb{N}       \nonumber
\end{align}
then, $\{\vartheta_k\}_{k\in\mathbb{N}}\in\ell_+^1(\mathfrak{F})$ and $z_k$ converges to a $[0,\infty)$-valued random variable a.s.
\end{lemma}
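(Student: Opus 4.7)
The plan is to reduce to classical nonnegative supermartingale convergence via a rescaling that absorbs the multiplicative perturbation $1+\varsigma_k$, which is the standard Robbins--Siegmund strategy. First I would set $\alpha_k:=\prod_{j=0}^{k-1}(1+\varsigma_j)$ with $\alpha_0=1$. Since $\{\varsigma_k\}_{k\in\mathbb{N}}\in\ell_+^1(\mathfrak{F})$, the infinite product converges a.s.\ to a finite random variable $\alpha_\infty\geq 1$, so $1\leq \alpha_k\leq \alpha_\infty<\infty$ a.s. Note that $\alpha_{k+1}$, $\vartheta_k$, and $\eta_k$ are all $\mathcal{F}_k$-measurable, so dividing the hypothesized inequality by $\alpha_{k+1}$ yields
\begin{align}
\mathbb{E}\Big(\frac{z_{k+1}}{\alpha_{k+1}}\,\Big|\,\mathcal{F}_k\Big)\leq \frac{z_k}{\alpha_k}-\frac{\vartheta_k}{\alpha_{k+1}}+\frac{\eta_k}{\alpha_{k+1}}.   \nonumber
\end{align}

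Next I would introduce the $\mathcal{F}_k$-adapted auxiliary process
\begin{align}
U_k:=\frac{z_k}{\alpha_k}+\sum_{j=0}^{k-1}\frac{\vartheta_j-\eta_j}{\alpha_{j+1}},    \nonumber
\end{align}
and verify directly from the previous display that $\mathbb{E}(U_{k+1}\,|\,\mathcal{F}_k)\leq U_k$, i.e., $U_k$ is a supermartingale. The main obstacle is that $U_k$ is not a priori nonnegative: it is only bounded below by $-S_\infty$, where $S_\infty:=\sum_{j=0}^\infty \eta_j/\alpha_{j+1}\leq \sum_j \eta_j$ is a.s.\ finite (by $\{\eta_k\}\in\ell_+^1(\mathfrak{F})$ and $\alpha_{j+1}\geq 1$) but not necessarily integrable. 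To handle this I would localize via the stopping times $\tau_N:=\inf\{k:S_k>N\}$ with $S_k:=\sum_{j=0}^{k-1}\eta_j/\alpha_{j+1}$; on $\{k\leq \tau_N\}$ one has $U_k\geq -N$, so $U_{k\wedge\tau_N}+N$ is a genuine nonnegative supermartingale and converges a.s.\ by Doob's theorem. Letting $N\to\infty$ and using $\mathbb{P}(\{S_\infty<\infty\})=1$ then gives a.s.\ convergence of $U_k$ itself.

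Finally, a.s.\ convergence of $U_k$ together with a.s.\ convergence of $S_k$ forces a.s.\ convergence of $z_k/\alpha_k+\sum_{j=0}^{k-1}\vartheta_j/\alpha_{j+1}$. Both summands are nonnegative and the second is monotonically nondecreasing, so each converges separately: $z_k/\alpha_k$ admits a $[0,\infty)$-valued a.s.\ limit and $\sum_j\vartheta_j/\alpha_{j+1}<\infty$ a.s. Using the bounds $1\leq \alpha_k\leq \alpha_\infty<\infty$ a.s., these translate back to $z_k$ converging a.s.\ to a $[0,\infty)$-valued random variable and $\sum_j\vartheta_j\leq \alpha_\infty\sum_j\vartheta_j/\alpha_{j+1}<\infty$ a.s., i.e., $\{\vartheta_k\}\in\ell_+^1(\mathfrak{F})$, as required. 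The hardest step to execute cleanly will be the localization argument, since the non-integrability of $S_\infty$ prevents a direct application of Doob's theorem to a uniformly shifted process.
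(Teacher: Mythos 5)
The paper does not actually prove this lemma: it is quoted verbatim (up to the typo $\|z_k\|$, which should read $z_k$) from Robbins and Siegmund \cite{robbins1971martin}, so there is no in-paper argument to compare against. Your proposal is essentially a correct reconstruction of the classical Robbins--Siegmund proof: the rescaling by $\alpha_k=\prod_{j=0}^{k-1}(1+\varsigma_j)$, the almost-supermartingale $U_k=z_k/\alpha_k+\sum_{j=0}^{k-1}(\vartheta_j-\eta_j)/\alpha_{j+1}$, and the localization to restore nonnegativity are exactly the standard route, and the translation back to convergence of $z_k$ and summability of $\vartheta_k$ via $1\le\alpha_k\le\alpha_\infty<\infty$ is sound. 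Two small technical points should be tightened when executing the sketch. First, with $\tau_N:=\inf\{k: S_k>N\}$ you only get $S_k\le N$ for $k<\tau_N$, not at $k=\tau_N$; since $S_{k+1}$ is $\mathcal{F}_k$-measurable, define instead $\tau_N:=\inf\{k: S_{k+1}>N\}$ (still a stopping time), which gives $U_{k\wedge\tau_N}\ge -N$ everywhere and makes the shifted stopped process genuinely nonnegative. Second, Doob's convergence theorem in its usual form needs integrability of the (stopped, shifted) supermartingale, and the lemma does not assume $\mathbb{E}(z_0)<\infty$; this is handled by one further localization on the $\mathcal{F}_0$-events $\{z_0\le M\}$ (on which an induction using the stopped inequality gives $\mathbb{E}(U_{k\wedge\tau_N}^+)\le M+N<\infty$), or by invoking the convergence theorem for generalized nonnegative supermartingales. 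With these routine fixes your argument goes through and proves the stated lemma.
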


\begin{lemma}[\cite{bauschke2017convex}]\label{l7}
Let $x,y\in\mathcal{H}$, and let $r\in\mathbb{R}$. Then
\begin{align}
\|rx+(1-r)y\|^2&=r\|x\|^2+(1-r)\|y\|^2      \nonumber\\
&\hspace{0.4cm}-r(1-r)\|x-y\|^2.       \label{08}
\end{align}
\end{lemma}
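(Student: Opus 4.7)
The plan is to establish this identity by direct expansion, relying only on the bilinearity and symmetry of the Hilbert space inner product on $\mathcal{H}$. Both sides are polynomials of degree two in $r$ whose coefficients are determined by $\|x\|^2$, $\|y\|^2$, and $\langle x,y\rangle$, so matching them should reduce to elementary algebraic bookkeeping.

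First, I would expand the left-hand side using the basic identity $\|a+b\|^2=\|a\|^2+2\langle a,b\rangle+\|b\|^2$ with $a=rx$ and $b=(1-r)y$, combined with the scalar homogeneity relations $\|\lambda z\|^2=\lambda^2\|z\|^2$ and $\langle \lambda u,\mu v\rangle=\lambda\mu\langle u,v\rangle$ valid for real scalars. This yields $\|rx+(1-r)y\|^2=r^2\|x\|^2+2r(1-r)\langle x,y\rangle+(1-r)^2\|y\|^2$, which is the canonical form to aim for.

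Next, I would process the right-hand side by expanding the cross term via $\|x-y\|^2=\|x\|^2-2\langle x,y\rangle+\|y\|^2$ and substituting into $r\|x\|^2+(1-r)\|y\|^2-r(1-r)\|x-y\|^2$. Regrouping, the coefficient of $\|x\|^2$ collapses to $r-r(1-r)=r^2$, the coefficient of $\|y\|^2$ collapses to $(1-r)-r(1-r)=(1-r)^2$, and the surviving cross term is $+2r(1-r)\langle x,y\rangle$. These three coefficients coincide exactly with those obtained for the left-hand side, completing the verification.

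There is essentially no obstacle here: the identity is a standard algebraic fact that holds for every $r\in\mathbb{R}$, with no convexity, positivity, or boundedness constraint on $r$ entering the calculation, which is why the statement is quoted verbatim from \cite{bauschke2017convex} rather than derived from first principles. The only mild point to notice is that one should resist invoking the parallelogram law (which corresponds to the special case $r=1/2$); the present identity is strictly more general and follows from the polarization computation above.
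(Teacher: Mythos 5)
Your expansion is correct: both sides reduce to $r^2\|x\|^2+2r(1-r)\langle x,y\rangle+(1-r)^2\|y\|^2$, which is exactly the standard verification of this identity (Corollary 2.15 in \cite{bauschke2017convex}). The paper itself offers no proof and simply cites that reference, so your direct computation matches the intended argument and fills it in completely.
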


The relationship between $x_{i,k+1}$ and $\hat{x}_{i,k}$ is revealed through $F_i$ in the following lemma.

\begin{lemma}\label{l8}
Consider the D-IBKM iteration (\ref{01}). For all $i\in[N]$, there holds
\begin{align}
\mathbb{E}(|||F_i(x_{i,k+1})-x_{i,k+1}|||^2|\chi_k)&\leq 4|||F_i(\hat{x}_{i,k})-\hat{x}_{i,k}|||^2        \nonumber\\
&\hspace{0.4cm}+16\alpha_{i,k}^2\mathbb{E}(|||\epsilon_{i,k}|||^2|\chi_k).      \nonumber
\end{align}
\end{lemma}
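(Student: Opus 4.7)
The plan is to adapt the argument of Lemma \ref{l5} to the block-coordinate, stochastic setting, taking account of (i) the random mask $b_{il,k}\in\{0,1\}$, (ii) the weighted norm $|||\cdot|||$ together with its equivalence $\|y\|^2\le|||y|||^2\le\|y\|^2/p_0$, and (iii) the need to work under $\mathbb{E}(\cdot\,|\,\chi_k)$. First, I would rewrite the update (\ref{01}) in its convex-combination form $x_{il,k+1}=(1-b_{il,k}\alpha_{i,k})\hat{x}_{il,k}+b_{il,k}\alpha_{i,k}(F_{il}(\hat{x}_{i,k})+\epsilon_{il,k})$, which is legitimate because $b_{il,k}\alpha_{i,k}\in[0,1-\alpha]\subset[0,1]$. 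Then split the $l$-th fixed-point residual as
\begin{align}
F_{il}(x_{i,k+1})-x_{il,k+1}&=\bigl[F_{il}(x_{i,k+1})-F_{il}(\hat{x}_{i,k})\bigr]\nonumber\\
&\hspace{0.4cm}+(1-b_{il,k}\alpha_{i,k})\bigl[F_{il}(\hat{x}_{i,k})-\hat{x}_{il,k}\bigr]\nonumber\\
&\hspace{0.4cm}-b_{il,k}\alpha_{i,k}\epsilon_{il,k}.\nonumber
\end{align}

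Next I would apply $(a+b)^2\le 2a^2+2b^2$ together with Lemma \ref{l7} (i.e., the convex-combination identity) to the last two summands, and use $b_{il,k}^2=b_{il,k}$ and $(1-b_{il,k}\alpha_{i,k})^2\le 1$ to strip the mask. This yields a pointwise bound of the shape $\|F_{il}(x_{i,k+1})-x_{il,k+1}\|^2\le 2\|F_{il}(x_{i,k+1})-F_{il}(\hat{x}_{i,k})\|^2+C_1\|F_{il}(\hat{x}_{i,k})-\hat{x}_{il,k}\|^2+C_2\alpha_{i,k}^2 b_{il,k}\|\epsilon_{il,k}\|^2$. Dividing by $p_l$ and summing over $l\in[m]$ immediately produces $|||\cdot|||^2$-terms on the right-hand side plus the residual $\sum_l\frac{b_{il,k}}{p_l}\|\epsilon_{il,k}\|^2$. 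Taking $\mathbb{E}(\cdot|\chi_k)$ and invoking the assumed independence of $b_{i,k}$ from $\chi_k$ (so $\mathbb{E}(b_{il,k}/p_l\,|\,\chi_k)=1$), the error residual collapses to $\sum_l\mathbb{E}(\|\epsilon_{il,k}\|^2|\chi_k)=\mathbb{E}(\|\epsilon_{i,k}\|^2|\chi_k)\le\mathbb{E}(|||\epsilon_{i,k}|||^2|\chi_k)$, which is the desired error contribution.

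To dispose of the Lipschitz-drift term $|||F_i(x_{i,k+1})-F_i(\hat{x}_{i,k})|||^2$ I plan to combine the norm equivalence with the nonexpansiveness of $F_i$ in $\|\cdot\|$, then substitute the explicit representation $\|x_{i,k+1}-\hat{x}_{i,k}\|^2=\alpha_{i,k}^2\sum_l b_{il,k}\|F_{il}(\hat{x}_{i,k})+\epsilon_{il,k}-\hat{x}_{il,k}\|^2$ (again using $b_{il,k}^2=b_{il,k}$), apply $(a+b)^2\le 2a^2+2b^2$ component-wise, and pass to the conditional expectation. The same collapse $\mathbb{E}(b_{il,k}|\chi_k)=p_l$ and the estimate $\sum_l p_l\|y_l\|^2\le\|y\|^2\le|||y|||^2$ turn this into further contributions proportional to $|||F_i(\hat{x}_{i,k})-\hat{x}_{i,k}|||^2$ and $\mathbb{E}(|||\epsilon_{i,k}|||^2|\chi_k)$. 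Consolidating the leading constants using $\alpha_{i,k}\le 1-\alpha\le 1$ yields the asserted inequality with constants $4$ and $16$.

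The main obstacle is the mismatch between the nonexpansiveness of $F_i$, which is intrinsic to $\|\cdot\|$, and the weighted norm $|||\cdot|||$ used in the statement. A naive passage between the two costs a factor $1/p_0$, so the bookkeeping must exploit the block-sparse nature of $x_{i,k+1}-\hat{x}_{i,k}$ (only the blocks with $b_{il,k}=1$ move) and the identity $\mathbb{E}(b_{il,k}/p_l\,|\,\chi_k)=1$ in order to absorb this factor into constants that are independent of $p_0$. Balancing the various factors-of-two produced by the repeated use of $(a+b)^2\le 2(a^2+b^2)$ against Lemma \ref{l7}, together with $(1-b_{il,k}\alpha_{i,k})^2\le 1$, is the delicate arithmetic that pins down the precise constants 4 and 16.
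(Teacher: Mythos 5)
Your decomposition of $F_{il}(x_{i,k+1})-x_{il,k+1}$ is in substance the same as the paper's (written per block with the mask made explicit), and your error-term handling is fine up to a repairable slip: the collapse $\mathbb{E}(b_{il,k}\|\epsilon_{il,k}\|^2\,|\,\chi_k)=p_l\,\mathbb{E}(\|\epsilon_{il,k}\|^2\,|\,\chi_k)$ presumes independence of $b_{i,k}$ and $\epsilon_{i,k}$, which the paper never assumes; but the crude bound $b_{il,k}\le 1$ already gives $\sum_l (b_{il,k}/p_l)\|\epsilon_{il,k}\|^2\le|||\epsilon_{i,k}|||^2$, which is all the stated inequality needs.

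The genuine gap is in your treatment of the drift term $|||F_i(x_{i,k+1})-F_i(\hat{x}_{i,k})|||^2$. Your route is norm equivalence, then nonexpansiveness in $\|\cdot\|$, then the block-sparse representation of $x_{i,k+1}-\hat{x}_{i,k}$ together with $\mathbb{E}(b_{il,k}\,|\,\chi_k)=p_l$. Followed literally, this gives $|||F_i(x_{i,k+1})-F_i(\hat{x}_{i,k})|||^2\le p_0^{-1}\|x_{i,k+1}-\hat x_{i,k}\|^2$ and, after taking $\mathbb{E}(\cdot\,|\,\chi_k)$, a bound whose blocks carry weights $p_l/p_0$ on $\|F_{il}(\hat x_{i,k})-\hat x_{il,k}\|^2$ (and on the error blocks). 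Those weights cannot be dominated by the target weights $1/p_l$ with an absolute constant: with $m=2$, all the residual mass in a block having $p_l=1$ while $p_0\to 0$ makes the ratio blow up like $1/p_0$. The absorption mechanism you invoke, block sparsity plus $\mathbb{E}(b_{il,k}/p_l\,|\,\chi_k)=1$, cancels the weights only when the masked vector is measured in $|||\cdot|||$ \emph{before} passing to the unweighted norm; once you have used $\|F_i(x)-F_i(y)\|\le\|x-y\|$ and paid the $1/p_0$ of the norm equivalence, the cancellation is unavailable, because $F_i(x_{i,k+1})-F_i(\hat x_{i,k})$ itself is not block-sparse. So your plan yields constants of order $1/p_0$, not the stated $4$ and $16$. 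The paper instead estimates $|||F_i(x_{i,k+1})-F_i(\hat{x}_{i,k})|||\le|||x_{i,k+1}-\hat{x}_{i,k}|||\le\alpha_{i,k}(|||T_{i,k}-\hat x_{i,k}|||+|||\varepsilon_{i,k}|||)$ directly in the weighted norm (i.e., it uses nonexpansiveness of $F_i$ with respect to $|||\cdot|||$), and then exploits the exact identities $\mathbb{E}(|||T_{i,k}-\hat x_{i,k}|||^2|\chi_k)=\|F_i(\hat x_{i,k})-\hat x_{i,k}\|^2$ and $\mathbb{E}(|||F_i(\hat x_{i,k})-T_{i,k}|||^2|\chi_k)=|||F_i(\hat x_{i,k})-\hat x_{i,k}|||^2-\|F_i(\hat x_{i,k})-\hat x_{i,k}\|^2$, whose sum reconstructs $|||F_i(\hat x_{i,k})-\hat x_{i,k}|||^2$; that is where the $p$-free constants $4$ and $16$ come from. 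You are right to notice that nonexpansiveness is native to $\|\cdot\|$, so the paper's weighted-norm step deserves scrutiny, but your substitute does not close the gap; it reintroduces exactly the $1/p_0$ dependence you set out to avoid.
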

\begin{proof}
It follows that
\begin{align}
&|||F_i(x_{i,k+1})-x_{i,k+1}|||          \nonumber\\
&= |||F_i(x_{i,k+1})-F_i(\hat{x}_{i,k})+(1-\alpha_{i,k})(F_i(\hat{x}_{i,k})-\hat{x}_{i,k})         \nonumber\\
&\hspace{0.4cm}+\alpha_{i,k}(F_i(\hat{x}_{i,k})-T_{i,k})-\alpha_{i,k}\varepsilon_{i,k}|||     \nonumber\\
&\leq |||F_i(x_{i,k+1})-F_i(\hat{x}_{i,k})|||+(1-\alpha_{i,k})|||F_i(\hat{x}_{i,k})-\hat{x}_{i,k}|||         \nonumber\\
&\hspace{0.4cm}+\alpha_{i,k}|||F_i(\hat{x}_{i,k})-T_{i,k}|||+\alpha_{i,k}|||\varepsilon_{i,k}|||     \nonumber\\
&\leq |||x_{i,k+1}-\hat{x}_{i,k}|||+(1-\alpha_{i,k})|||F_i(\hat{x}_{i,k})-\hat{x}_{i,k}|||         \nonumber\\
&\hspace{0.4cm}+\alpha_{i,k}|||F_i(\hat{x}_{i,k})-T_{i,k}|||+\alpha_{i,k}|||\varepsilon_{i,k}|||     \nonumber\\
&\leq \alpha_{i,k}|||T_{i,k}-\hat{x}_{i,k}|||+(1-\alpha_{i,k})|||F_i(\hat{x}_{i,k})-\hat{x}_{i,k}|||         \nonumber\\
&\hspace{0.4cm}+\alpha_{i,k}|||F_i(\hat{x}_{i,k})-T_{i,k}|||+2\alpha_{i,k}|||\varepsilon_{i,k}|||,     \label{t4.x}
\end{align}
where (\ref{04}) has been employed in the equality and last inequality, and the nonexpansiveness of $F_i$ deduces the second inequality.

To proceed, let us analyze $\mathbb{E}(|||T_{i,k}-\hat{x}_{i,k}|||^2|\chi_k)$ and $\mathbb{E}(|||F_i(\hat{x}_{i,k})-T_{i,k}|||^2|\chi_k)$. In doing so, for all $i\in[N]$, $k\in\mathbb{N}$, and $l\in[m]$, define
\begin{align}
q_{il,k}(y,b)=\|y_l-F_{il}(y)+b(F_{il}(y)-y_l)\|^2       \label{t4.1}
\end{align}
for $y\in\mathcal{H},b\in\{0,1\}$. It is easy to see that $q_{il,k}(\hat{x}_{i,k},b_{il,k})$ is $\chi_k$-measurable since $F_{il}$ is so.

As a result, one can obtain that for all $i\in[N]$
\begin{align}
&\mathbb{E}(|||T_{i,k}-F_i(\hat{x}_{i,k})|||^2|\chi_k)        \nonumber\\
&=\sum_{l=1}^m\frac{1}{p_l}\mathbb{E}(\|T_{il,k}-F_{il}(\hat{x}_{i,k})\|^2|\chi_k)       \nonumber\\
&=\sum_{l=1}^m\frac{1}{p_l}\sum_{b\in\{0,1\}}\mathbb{P}(b_{il,k}=b)q_{il,k}(\hat{x}_{i,k},b_{il,k})           \nonumber\\
&=\sum_{l=1}^m\frac{1}{p_l}\mathbb{P}(b_{il,k}=0)\|\hat{x}_{i,k}-F_{il}(\hat{x}_{i,k})\|^2       \nonumber\\
&=|||F_i(\hat{x}_{i,k})-\hat{x}_{i,k}|||^2-\|F_i(\hat{x}_{i,k})-\hat{x}_{i,k}\|^2,             \label{t4.a}
\end{align}
where the third equality has used the definition (\ref{03}). Similarly, one has that
\begin{align}
\mathbb{E}(|||T_{i,k}-\hat{x}_{i,k}|||^2|\chi_k)=\|F_i(\hat{x}_{i,k})-\hat{x}_{i,k}\|^2.             \label{t4.b}
\end{align}

Now, by squaring (\ref{t4.x}), taking the conditional expectation, and inserting (\ref{t4.a}) and (\ref{t4.b}) into it, one obtains that
\begin{align}
&\mathbb{E}(|||F_i(x_{i,k+1})-x_{i,k+1}|||^2|\chi_k)          \nonumber\\
&\leq 4[\alpha_{i,k}^2+(1-\alpha_{i,k})^2]|||F_i(\hat{x}_{i,k})-\hat{x}_{i,k}|||^2        \nonumber\\
&\hspace{0.4cm}+16\alpha_{i,k}^2\mathbb{E}(|||\varepsilon_{i,k}|||^2|\chi_k)     \nonumber\\
&\leq 4|||F_i(\hat{x}_{i,k})-\hat{x}_{i,k}|||^2+16\alpha_{i,k}^2\mathbb{E}(|||\varepsilon_{i,k}|||^2|\chi_k),     \nonumber
\end{align}
which is as claimed.
\end{proof}

With the above lemmas at hand, we are now ready to prove Theorems \ref{t4} and \ref{t5} as follows.

{\em Proof of Theorem \ref{t4}:} Throughout this subsection, let $x^*\in X^*$. Similar to (\ref{t4.a}), one can obtain that for all $i\in[N]$
\begin{align}
&\mathbb{E}(|||T_{i,k}-x^*|||^2|\chi_k)        \nonumber\\
&=\sum_{l=1}^m\frac{1}{p_l}\mathbb{P}(b_{il,k}=1)\|F_{il}(\hat{x}_{i,k})-x_l^*\|^2             \nonumber\\
&\hspace{0.4cm}+\sum_{l=1}^m\frac{1}{p_l}\mathbb{P}(b_{il,k}=0)\|\hat{x}_{i,k}-x_l^*\|^2       \nonumber\\
&=\sum_{l=1}^m\|F_{il}(\hat{x}_{i,k})-x_l^*\|^2+\sum_{l=1}^m\frac{1-p_l}{p_l}\|\hat{x}_{i,k}-x_l^*\|^2       \nonumber\\
&=|||\hat{x}_{i,k}-x^*|||^2+\|F_i(\hat{x}_{i,k})-x^*\|^2-\|\hat{x}_{i,k}-x^*\|^2           \nonumber\\
&\leq |||\hat{x}_{i,k}-x^*|||^2,             \label{t4.2}
\end{align}
where the first equality has used the definition (\ref{03}), and the nonexpansive property of $F_i$ has been applied to get the inequality.

Invoking Jensen's inequality and (\ref{t4.2}), it can be concluded that
\begin{align}
\mathbb{E}(|||T_{i,k}-x^*|||~|\chi_k)&\leq \sqrt{\mathbb{E}(|||T_{i,k}-x^*|||^2|\chi_k)}       \nonumber\\
&\leq |||\hat{x}_{i,k}-x^*|||.         \label{t4.3}
\end{align}

Then, in view of (\ref{04}), one has that
\begin{align}
&|||x_{i,k+1}-x^*|||        \nonumber\\
&=|||(1-\alpha_{i,k})(\hat{x}_{i,k}-x^*)+\alpha_{i,k}(T_{i,k}-x^*)+\alpha_{i,k}\varepsilon_{i,k}|||     \nonumber\\
&\leq (1-\alpha_{i,k})|||\hat{x}_{i,k}-x^*|||+\alpha_{i,k}|||T_{i,k}-x^*|||+\alpha_{i,k}|||\varepsilon_{i,k}|||.    \nonumber
\end{align}
Taking the conditional expectation on the above inequality yields that
\begin{align}
&\mathbb{E}(|||x_{i,k+1}-x^*|||~|\chi_k)       \nonumber\\
&\leq (1-\alpha_{i,k})|||\hat{x}_{i,k}-x^*|||+\alpha_{i,k}\mathbb{E}(|||T_{i,k}-x^*|||~|\chi_k)     \nonumber\\
&\hspace{0.4cm}+\alpha_{i,k}\mathbb{E}(|||\varepsilon_{i,k}|||~|\chi_k)    \nonumber\\
&\leq |||\hat{x}_{i,k}-x^*|||+\alpha_{i,k}\mathbb{E}(|||\varepsilon_{i,k}|||~|\chi_k)      \nonumber\\
&\leq \sum_{j=1}^N a_{ij,k}|||x_{j,k}-x^*|||+\alpha_{i,k}\mathbb{E}(|||\varepsilon_{i,k}|||~|\chi_k),      \nonumber
\end{align}
where the second inequality has exploited (\ref{t4.3}) and the last inequality has applied the convexity of norm $|||\cdot|||$. By multiplying $\pi_{i,k+1}$ on both sides of the above inequality and summing over $i\in[N]$, it is easy to obtain that
\begin{align}
&\mathbb{E}(\sum_{i=1}^N\pi_{i,k+1}|||x_{i,k+1}-x^*|||~|\chi_k)          \nonumber\\
&\leq \sum_{j=1}^N \pi_{j,k}|||x_{j,k}-x^*|||+\sum_{i=1}^N\alpha_{i,k}\mathbb{E}(|||\varepsilon_{i,k}|||~|\chi_k),      \label{t4.4}
\end{align}
where we have employed $\pi_k^\top=\pi_{k+1}^\top A_k$ and $\pi_{i,k}\leq 1$ in Lemma \ref{l1}.

By the assumption in Theorem \ref{t4}, it is straightforward to verify that
\begin{align}
\sum_{k\in\mathbb{N}}\mathbb{E}(|||\varepsilon_{i,k}|||~|\chi_k)
&\leq\sum_{k\in\mathbb{N}}\sqrt{\mathbb{E}(|||\varepsilon_{i,k}|||^2|\chi_k)}      \nonumber\\
&\leq\sum_{k\in\mathbb{N}}\sqrt{\mathbb{E}(|||\epsilon_{i,k}|||^2|\chi_k)}        \nonumber\\
&\leq \frac{1}{\sqrt{p_0}}\sum_{k\in\mathbb{N}}\sqrt{\mathbb{E}(\|\epsilon_{i,k}\|^2|\chi_k)}                \nonumber\\
&<\infty.                             \label{t4.5}
\end{align}

Now, applying Lemma \ref{l6} to (\ref{t4.4}), one can readily obtain that $\sum_{j=1}^N \pi_{j,k}|||x_{j,k}-x^*|||$ and thereby $x_{i,k}$'s are bounded a.s.

Since $x_{i,k}$ is bounded a.s., there exists $\tau_1\in(0,\infty)$ such that for all $k\in\mathbb{N},i\in[N]$
\begin{align}
\tau_1&\geq 2|||(1-\alpha_{i,k})(\hat{x}_{i,k}-x^*)+\alpha_{i,k}(T_{i,k}-x^*)|||     \nonumber\\
&\hspace{1.8cm}+\alpha_{i,k}|||\varepsilon_{i,k}|||,~~a.s.           \nonumber
\end{align}

Then, it follows that
\begin{align}
&|||x_{i,k+1}-x^*|||^2        \nonumber\\
&=|||(1-\alpha_{i,k})(\hat{x}_{i,k}-x^*)+\alpha_{i,k}(T_{i,k}-x^*)+\alpha_{i,k}\varepsilon_{i,k}|||^2     \nonumber\\
&\leq |||(1-\alpha_{i,k})(\hat{x}_{i,k}-x^*)+\alpha_{i,k}(T_{i,k}-x^*)|||^2        \nonumber\\
&\hspace{0.4cm}+\tau_1\alpha_{i,k}|||\varepsilon_{i,k}|||     \nonumber\\
&= (1-\alpha_{i,k})|||\hat{x}_{i,k}-x^*|||^2+\alpha_{i,k}|||T_{i,k}-x^*|||^2        \nonumber\\
&\hspace{0.4cm}-\alpha_{i,k}(1-\alpha_{i,k})|||T_{i,k}-\hat{x}_{i,k}|||^2+\tau_1\alpha_{i,k}|||\varepsilon_{i,k}|||,      \nonumber
\end{align}
where Lemma \ref{l7} has been utilized to get the last equality. Taking the conditional expectation on the above inequality, one has that
\begin{align}
&\mathbb{E}(|||x_{i,k+1}-x^*|||^2|\chi_k)        \nonumber\\
&\leq (1-\alpha_{i,k})|||\hat{x}_{i,k}-x^*|||^2+\alpha_{i,k}\mathbb{E}(|||T_{i,k}-x^*|||^2|\chi_k)        \nonumber\\
&\hspace{0.4cm}-\alpha_{i,k}(1-\alpha_{i,k})\mathbb{E}(|||T_{i,k}-\hat{x}_{i,k}|||^2|\chi_k)         \nonumber\\
&\hspace{0.4cm}+\tau_1\alpha_{i,k}\mathbb{E}(|||\varepsilon_{i,k}|||~|\chi_k)      \nonumber\\
&\leq \sum_{j=1}^N a_{ij,k}|||x_{j,k}-x^*|||^2+\tau_1\alpha_{i,k}\mathbb{E}(|||\varepsilon_{i,k}|||~|\chi_k)      \nonumber\\
&\hspace{0.4cm}-\alpha_{i,k}(1-\alpha_{i,k})\mathbb{E}(|||T_{i,k}-\hat{x}_{i,k}|||^2|\chi_k),            \nonumber
\end{align}
where the last inequality has made use of (\ref{t4.2}) and the convexity of norm $|||\cdot|||^2$. Then, by multiplying $\pi_{i,k+1}$ on both sides of the above inequality and summing over $i\in[N]$, one can obtain that
\begin{align}
&\mathbb{E}(\sum_{i=1}^N\pi_{i,k+1}|||x_{i,k+1}-x^*|||^2|\chi_k)          \nonumber\\
&\leq \sum_{j=1}^N \pi_{j,k}|||x_{j,k}-x^*|||^2+\tau_1(1-\alpha)\sum_{i=1}^N\mathbb{E}(|||\varepsilon_{i,k}|||~|\chi_k)      \nonumber\\
&\hspace{0.4cm}-\underline{\pi}\alpha(1-\alpha)\sum_{i=1}^N\mathbb{E}(|||T_{i,k}-\hat{x}_{i,k}|||^2|\chi_k),            \label{t4.6}
\end{align}
where Lemma \ref{l1} has been applied. Recalling (\ref{t4.5}) and in light of Lemma \ref{l6}, one has that
\begin{align}
\sum_{k\in\mathbb{N}}\sum_{i=1}^N\mathbb{E}(|||T_{i,k}-\hat{x}_{i,k}|||^2|\chi_k)<\infty          \label{t4.7}
\end{align}
yielding that
\begin{align}
\mathbb{E}(|||T_{i,k}-\hat{x}_{i,k}|||^2|\chi_k)\to 0,~~\text{as}~k\to\infty           \label{t4.8}
\end{align}
which, by the law of total expectation, gives rise to
\begin{align}
\mathbb{E}(|||T_{i,k}-\hat{x}_{i,k}|||^2)\to 0,~~\text{as}~k\to\infty.           \label{t4.9}
\end{align}

Consider the iteration (\ref{04}). It can be written in a compact form
\begin{align}
x_{k+1}=(A_k\otimes Id)x_k+r_k,         \label{t4.10}
\end{align}
where $x_k:=col(x_{1,k},\ldots,x_{N,k})$, $r_k:=col(r_{1,k},\ldots,r_{N,k})$, and $r_{i,k}:=\alpha_{i,k}(T_{i,k}-\hat{x}_{i,k})+\alpha_{i,k}\varepsilon_{i,k}$ for $i\in[N]$. In view of (\ref{t4.5}) and (\ref{t4.9}), it follows that $\mathbb{E}(|||r_{i,k}|||^2)\to 0$ and thus $\mathbb{E}(|||r_{k}|||^2)\to 0$. Then, using the same arguments as that of Lemmas 3 and 4 in \cite{xie2018distributed} and applying Lemma \ref{l3}, one has that for all $i\in[N]$
\begin{align}
\mathbb{E}(|||x_{i,k}-\bar{x}_k|||^2)\to 0,~\text{as}~k\to\infty          \label{t4.11}
\end{align}
where $\bar{x}_k:=\sum_{i=1}^N\pi_{i,k}x_{i,k}$ is viewed as a weighted average of $x_{i,k}$'s. By resorting to Markov's inequality, for arbitrary small $\delta>0$, it can be claimed that
\begin{align}
\mathbb{P}(|||x_{i,k}-\bar{x}_k|||^2>\delta)\leq \frac{\mathbb{E}(|||x_{i,k}-\bar{x}_k|||^2)}{\delta},      \nonumber
\end{align}
which, together with (\ref{t4.11}), implies that
\begin{align}
|||x_{i,k}-\bar{x}_k|||^2\to 0,~\text{a.s.}      \label{t4.12}
\end{align}

Now, combining (\ref{t4.b}) with (\ref{t4.8}) leads to that
\begin{align}
\|F_i(\hat{x}_{i,k})-\hat{x}_{i,k}\|^2\to 0,~\text{as}~k\to\infty        \nonumber
\end{align}
further yielding, by the norm equivalence, that
\begin{align}
|||F_i(\hat{x}_{i,k})-\hat{x}_{i,k}|||^2\to 0,~\text{as}~k\to\infty.              \label{t4.14}
\end{align}

Further, one can have that for all $i\in[N]$
\begin{align}
&|||F_i(\bar{x}_k)-\bar{x}_k|||^2         \nonumber\\
&\leq 3|||F_i(\bar{x}_k)-F_i(\hat{x}_{i,k})|||^2+3|||F_i(\hat{x}_{i,k})-\hat{x}_{i,k}|||^2        \nonumber\\
&\hspace{0.4cm}+3|||\hat{x}_{i,k}-\bar{x}_k|||^2       \nonumber\\
&\leq 6|||\hat{x}_{i,k}-\bar{x}_k|||^2+3|||F_i(\hat{x}_{i,k})-\hat{x}_{i,k}|||^2,          \label{t4.15}
\end{align}
where we have exploited $(a+b+c)^2\leq 3(a^2+b^2+c^2)$ for $a,b,c>0$ and the nonexpansive property of $F_i$ in the first and second inequalities, respectively. Meanwhile, by the convexity of norm, it follows that
\begin{align}
|||\hat{x}_{i,k}-\bar{x}_k|||\leq \sum_{j=1}^N a_{ij,k}|||x_{j,k}-\bar{x}_k|||,        \nonumber
\end{align}
which, together with (\ref{t4.12}), results in
\begin{align}
|||\hat{x}_{i,k}-\bar{x}_k|||\to 0,~\text{as}~k\to\infty.        \label{t4.16}
\end{align}

Combining (\ref{t4.14}), (\ref{t4.15}), and (\ref{t4.16}) gives rise to that for all $i\in[N]$
\begin{align}
|||F_i(\bar{x}_k)-\bar{x}_k|||\to 0,~\text{as}~k\to\infty.          \label{t4.16a}
\end{align}

Finally, following the same reasoning as that between (\ref{t1.15}) and (\ref{t1.16}), the a.s. weak convergence of $x_{i,k}$'s to a common point in $X^*$ in Theorem \ref{t4} can be concluded.

It remains to prove the convergence result in (\ref{05}). This can be similarly done as that of (\ref{12}) using (\ref{t4.7}), (\ref{t4.b}), Lemma \ref{l8}, (\ref{t4.5}), and the law of total expectation. This ends the proof.
\hfill\rule{2mm}{2mm}

{\em Proof of Theorem \ref{t5}:} Let us denote by $D_{S}(x)$ the distance from a vector $x\in\mathcal{H}$ to a set $S$ in space $(\mathcal{H},|||\cdot|||)$. Let $s_{i,k}:=\sum_{j=1}^N a_{ij,k}P_{X^*}(x_{j,k})$ and there exists $\tau_{2}\in(0,\infty)$ such that for all $k\in\mathbb{N},i\in[N]$
\begin{align}
\tau_2&\geq 2|||(1-\alpha_{i,k})(\hat{x}_{i,k}-s_{i,k})      \nonumber\\
&\hspace{1.8cm}+\alpha_{i,k}(T_{i,k}-s_{i,k})|||+\alpha_{i,k}|||\varepsilon_{i,k}|||,~~a.s.     \nonumber
\end{align}
due to the boundedness of $x_{i,k}$'s.

Then, in light of (\ref{04}), it can be derived that
\begin{align}
&D_{X^*}(x_{i,k+1})         \nonumber\\
&\leq |||x_{i,k+1}-s_{i,k}|||^2         \nonumber\\
&=|||(1-\alpha_{i,k})(\hat{x}_{i,k}-s_{i,k})+\alpha_{i,k}(T_{i,k}-s_{i,k})+\alpha_{i,k}\varepsilon_{i,k}|||^2       \nonumber\\
&\leq |||(1-\alpha_{i,k})(\hat{x}_{i,k}-s_{i,k})+\alpha_{i,k}(T_{i,k}-s_{i,k})|||^2            \nonumber\\
&\hspace{0.4cm}+\tau_2\alpha_{i,k}|||\varepsilon_{i,k}|||       \nonumber\\
&=(1-\alpha_{i,k})|||\hat{x}_{i,k}-s_{i,k}|||^2+\alpha_{i,k}|||T_{i,k}-s_{i,k}|||^2           \nonumber\\
&\hspace{0.4cm}-\alpha_{i,k}(1-\alpha_{i,k})|||T_{i,k}-\hat{x}_{i,k}|||^2+\tau_2\alpha_{i,k}|||\varepsilon_{i,k}|||,         \label{t5.1}
\end{align}
where the last equality has invoked Lemma \ref{l7}. Next, as similarly done for (\ref{t4.2}), it can obtain that
\begin{align}
\mathbb{E}(|||T_{i,k}-s_{i,k}|||^2|\chi_k)\leq |||\hat{x}_{i,k}-s_{i,k}|||^2.        \label{t5.2}
\end{align}

Consequently, by multiplying $\pi_{i,k+1}$ on both sides of (\ref{t5.1}), summing over $i\in[N]$, using the convexity of $|||\cdot|||$, and taking the conditional expectation along with (\ref{t4.a}), (\ref{t4.b}), (\ref{t5.2}) and $\|\cdot\|^2\geq p_0|||\cdot|||^2$, one can get that
\begin{align}
&\mathbb{E}(D_{k+1}^2|\chi_k)     \nonumber\\
&\leq D_k^2-\frac{\underline{\pi}p_0^2\alpha(1-\alpha)}{4}\sum_{i=1}^N\mathbb{E}(|||F_i(x_{i,k+1})-x_{i,k+1}|||^2|\chi_k)    \nonumber\\
&\hspace{0.4cm}+\tau_3\sum_{i=1}^N\sqrt{\mathbb{E}(|||\varepsilon_{i,k}|||^2|\chi_k)},          \label{t5.3}
\end{align}
where the parameters $D_k^2:=\sum_{i=1}^N \pi_{i,k} D_{X^*}^2(x_{i,k})$ and the existence of $\tau_3\in(0,\infty)$ is guaranteed by the boundedness of $x_{i,k}$'s, with $\tau_3\geq \tau_2(1-\alpha)+4\underline{\pi}p_0\alpha(1-\alpha)^3\sqrt{\mathbb{E}(|||\epsilon_{i,k}|||^2|\chi_k)}$ a.s. for all $k\in\mathbb{N},i\in[N]$. Using (\ref{t2.9}) in (\ref{t5.3}), one can obtain that
\begin{align}
&\mathbb{E}(D_{k+1}^2|\chi_k)     \nonumber\\
&\leq D_k^2-\frac{\underline{\pi}p_0^2\alpha(1-\alpha)}{8}\sum_{i=1}^N\mathbb{E}(|||F_i(\bar{x}_{k+1})-\bar{x}_{k+1}|||^2|\chi_k)    \nonumber\\
&\hspace{0.4cm}+\sum_{i=1}^N\mathbb{E}(|||x_{i,k+1}-\bar{x}_{k+1}|||^2|\chi_k)        \nonumber\\
&\hspace{0.4cm}+\tau_3\sum_{i=1}^N\sqrt{\mathbb{E}(|||\varepsilon_{i,k}|||^2|\chi_k)}.          \label{t5.4}
\end{align}

In view of (\ref{06}), it can be derived that $\sum_{i=1}^N|||F_i(\bar{x}_{k+1})-\bar{x}_{k+1}|||^2\geq \frac{p_0}{N\nu^2}D_{X^*}^2(\bar{x}_{k+1})$, which, together with (\ref{t2.13}) and (\ref{t5.4}), results in
\begin{align}
\eta_1 \mathbb{E}(D_{k+1}^2|\chi_k)&\leq D_k^2+\tau_4\sum_{i=1}^N\mathbb{E}(|||x_{i,k+1}-\bar{x}_{k+1}|||^2|\chi_k)    \nonumber\\
&\hspace{0.4cm}+\tau_3\sum_{i=1}^N\sqrt{\mathbb{E}(|||\varepsilon_{i,k}|||^2|\chi_k)},        \label{t5.5}
\end{align}
where
\begin{align}
\eta_1&:=1+\frac{\underline{\pi}p_0^2\alpha(1-\alpha)}{16N^2\nu^2},         \nonumber\\
\tau_4&:=\underline{\pi}p_0 \alpha(1-\alpha)(1+\frac{p_0}{8N^2\nu^2}).     \nonumber
\end{align}

It is easy to verify that (\ref{t2.24}) still holds in the expectation sense. Thus, by taking the expectation on both sides of (\ref{t5.5}), one has that
\begin{align}
\eta_1 \mathbb{E}(D_{k+1}^2)&\leq \eta_2\sup_{\lfloor\frac{k+1}{2}\rfloor\leq l\leq k}\mathbb{E}(D_l^2)+\eta_1 e_k',        \label{t5.6}
\end{align}
where $\eta_2:=24\tau_4\alpha_c^2N^3\varpi^2\xi^2/(\underline{\pi}(1-\xi)^2)$ and
\begin{align}
e_k'&:=\frac{1}{\eta_1}\bigg(3\tau_4N^3\varpi^2\xi^{2(k+1)}\mathbb{E}(\|x_0-\bar{x}_0\|^2)         \nonumber\\
&\hspace{1.2cm}+\frac{3\tau_4N^3\varpi^2\xi^{k+3}}{(1-\xi)^2}\sup_{l\in\mathbb{N}}\mathbb{E}(\|r_l\|^2|\chi_k)     \nonumber\\
&\hspace{1.2cm}+\frac{6\tau_4N^3\varpi^2\xi^2}{(1-\xi)^2}\sup_{\lfloor\frac{k+1}{2}\rfloor\leq l\leq k}\mathbb{E}(\|\varepsilon_l\|^2|\chi_k)\bigg).       \nonumber
\end{align}
Therefore, letting $\eta:=\eta_2/\eta_1$ with $\eta\in(0,1)$ under (\ref{07}), it can be concluded that
\begin{align}
\mathbb{E}(D_{k+1}^2)&\leq \eta\sup_{\lfloor\frac{k+1}{2}\rfloor\leq l\leq k}\mathbb{E}(D_l^2)+ e_k',        \label{t5.7}
\end{align}

In the end, invoking the similar argument for (\ref{t2.28}), the conclusions of this theorem can be established. This ends the proof.
\hfill\rule{2mm}{2mm}

\section{Conclusion}\label{s6}

This paper has investigated the problem of seeking a common fixed point for a family of nonexpansive operators over a time-varying multi-agent network in real Hilbert spaces, where each operator is only privately and approximately known by individual agent. In order to deal with the problem, a distributed algorithm, called D-IKM iteration, has been developed, which is shown to be weakly convergent to a common fixed point of the collection of operators, and furthermore, convergent with the rate $O(1/k^{\ln(1/\xi)})$ under the (bounded) linear regularity assumption. To further make this algorithm more implementable in practice, another scenario, where only a random part of coordinate (instead of the entire coordinate) is activated and updated for each agent at each iteration, has been studied. Another distributed algorithm, called D-IBKM, has been accordingly proposed along with the convergence analysis similar to the D-IKM iteration case, but in the sense of almost surely. In addition, a novel concept, i.e., bounded power regularity for a family of operators, has been introduced, which is more relaxed than the counterparts for an operator and a family of sets. It is shown that the convergence rate $O(1/k^{\ln(1/\xi)})$ can still be ensured under the assumption of the new concept. Regarding future work, it is interesting to consider the asynchronous case, i.e., all agents have their own local clocks, and to further study the convergence rate under the (bounded) power regularity.

%\section*{Acknowledgment}
%
%The authors are grateful to the Editor, the Associate Editor and the anonymous reviewers for their insightful suggestions.

%\section*{Appendix}

%\hfill\rule{2mm}{2mm}

%%%%%%%%%%%%%%%%%%%%%%%%%%%%%%%%%%%%%%%%%%%%%%%%%%%%%%%%%%%%%%%%%%%%%%%%%%%%%%%%%%%%%%%%%%%%%%%%%%%%%
%\bibliographystyle{siam}
%\bibliographystyle{amsplain}
%\bibliographystyle{plain}
%\bibliographystyle{unsrt}
%\bibliographystyle{alpha}
%\bibliographystyle{apalike}

%\bibliography{IEEEabrv,ownreference}
%\bibliographystyle{IEEEtran}
%\bibliography{../../xiuxian_references}

\begin{thebibliography}{10}

\bibitem{bauschke2017convex}
H.~H. Bauschke and P.~L. Combettes, \emph{{Convex Analysis and Monotone
  Operator Theory in Hilbert Spaces}, 2nd ed}.\hskip 1em plus 0.5em minus
  0.4em\relax Springer, New York, 2017.

\bibitem{cegielski2012iterative}
A.~Cegielski, \emph{Iterative Methods for Fixed Point Problems in Hilbert
  Spaces}.\hskip 1em plus 0.5em minus 0.4em\relax Springer, Heidelberg, 2012,
  vol. 2057.

\bibitem{eckstein1992douglas}
J.~Eckstein and D.~P. Bertsekas, ``{On the Douglas-Rachford splitting method
  and the proximal point algorithm for maximal monotone operators},''
  \emph{Mathematical Programming}, vol.~55, no. 1-3, pp. 293--318, 1992.

\bibitem{attouch2010parallel}
H.~Attouch, L.~M. Briceno-Arias, and P.~L. Combettes, ``A parallel splitting
  method for coupled monotone inclusions,'' \emph{SIAM Journal on Control and
  Optimization}, vol.~48, no.~5, pp. 3246--3270, 2010.

\bibitem{iiduka2016convergence}
H.~Iiduka, ``Convergence analysis of iterative methods for nonsmooth convex
  optimization over fixed point sets of quasi-nonexpansive mappings,''
  \emph{Mathematical Programming}, vol. 159, no. 1-2, pp. 509--538, 2016.

\bibitem{borwein2017convergence}
J.~M. Borwein, G.~Li, and M.~K. Tam, ``Convergence rate analysis for averaged
  fixed point iterations in common fixed point problems,'' \emph{SIAM Journal
  on Optimization}, vol.~27, no.~1, pp. 1--33, 2017.

\bibitem{haskell2017random}
W.~B. Haskell and R.~Jain, ``A random monotone operator framework for strongly
  convex stochastic optimization,'' in \emph{Proceedings of IEEE 56th
  Conference on Decision and Control}, Melbourne, Australia, 2017, pp.
  3763--3768.

\bibitem{yi2017distributed}
P.~Yi and L.~Pavel, ``{A distributed primal-dual algorithm for computation of
  generalized Nash equilibria via operator splitting methods},'' in
  \emph{Proceedings of 56th Conference on Decision and Control}, Melbourne,
  Australia, 2017, pp. 3841--3846.

\bibitem{banjac2018tight}
G.~Banjac and P.~J. Goulart, ``Tight global linear convergence rate bounds for
  operator splitting methods,'' \emph{IEEE Transactions on Automatic Control},
  vol.~63, no.~12, pp. 4126--4139, 2018.

\bibitem{xu2018bregman}
J.~Xu, S.~Zhu, Y.~C. Soh, and L.~Xie, ``{A Bregman splitting scheme for
  distributed optimization over networks},'' \emph{IEEE Transactions on
  Automatic Control}, vol.~63, no.~11, pp. 3809--3824, 2018.

\bibitem{ren2010distributed}
W.~Ren and Y.~Cao, \emph{{Distributed Coordination of Multi-Agent Networks:
  Emergent Problems, Models, and Issues}}.\hskip 1em plus 0.5em minus
  0.4em\relax London, U.K.: Springer-Verlag, 2010.

\bibitem{fullmer2018distributed}
D.~Fullmer and A.~S. Morse, ``A distributed algorithm for computing a common
  fixed point of a finite family of paracontractions,'' \emph{IEEE Transactions
  on Automatic Control}, vol.~63, no.~9, pp. 2833--2843, 2018.

\bibitem{fullmer2016asynchronous}
D.~Fullmer, J.~Liu, and A.~S. Morse, ``An asynchronous distributed algorithm
  for computing a common fixed point of a family of paracontractions,'' in
  \emph{Proceedings of 55th Conference on Decision and Control}, Las Vegas,
  USA, 2016, pp. 2620--2625.

\bibitem{mou2015distributed}
S.~Mou, J.~Liu, and A.~S. Morse, ``A distributed algorithm for solving a linear
  algebraic equation,'' \emph{IEEE Transactions on Automatic Control}, vol.~60,
  no.~11, pp. 2863--2878, 2015.

\bibitem{wang2016distributed}
L.~Wang, D.~Fullmer, and A.~S. Morse, ``A distributed algorithm with an
  arbitrary initialization for solving a linear algebraic equation,'' in
  \emph{Proceedings of American Control Conference}, Boston, MA, USA, 2016, pp.
  1078--1081.

\bibitem{wang2017further}
X.~Wang, S.~Mou, and D.~Sun, ``Further discussions on a distributed algorithm
  for solving linear algebra equations,'' in \emph{Proceedings of American
  Control Conference}, Seattle, USA, 2017, pp. 4274--4278.

\bibitem{wang2018distributed}
P.~Wang, W.~Ren, and Z.~Duan, ``Distributed algorithm to solve a system of
  linear equations with unique or multiple solutions from arbitrary
  initializations,'' \emph{IEEE Transactions on Control of Network Systems}, in
  press, doi: 10.1109/TCNS.2018.2797805, 2018.

\bibitem{alaviani2018distributed}
S.~S. Alaviani and N.~Elia, ``A distributed algorithm for solving linear
  algebraic equations over random networks,'' \emph{arXiv preprint
  arXiv:1809.07955}, 2018.

\bibitem{liu2017distributed}
J.~Liu, D.~Fullmer, A.~Nedi{\'c}, T.~Ba{\c{s}}ar, and A.~S. Morse, ``A
  distributed algorithm for computing a common fixed point of a family of
  strongly quasi-nonexpansive maps,'' in \emph{Proceedings of American Control
  Conference}, Seattle, USA, 2017, pp. 686--690.

\bibitem{cominetti2014rate}
R.~Cominetti, J.~A. Soto, and J.~Vaisman, ``{On the rate of convergence of
  Krasnosel'ski\u{\i}-Mann iterations and their connection with sums of
  Bernoullis},'' \emph{Israel Journal of Mathematics}, vol. 199, no.~2, pp.
  757--772, 2014.

\bibitem{liang2016convergence}
J.~Liang, J.~Fadili, and G.~Peyr{\'e}, ``Convergence rates with inexact
  non-expansive operators,'' \emph{Mathematical Programming}, vol. 159, no.
  1-2, pp. 403--434, 2016.

\bibitem{matsushita2017convergence}
S.~Matsushita, ``{On the convergence rate of the Krasnosel'ski\u{\i}-Mann
  iteration},'' \emph{Bulletin of the Australian Mathematical Society},
  vol.~96, no.~1, pp. 162--170, 2017.

\bibitem{kanzow2017generalized}
C.~Kanzow and Y.~Shehu, ``{Generalized Krasnosel'ski\u{\i}-Mann-type iterations
  for nonexpansive mappings in Hilbert spaces},'' \emph{Computational
  Optimization and Applications}, vol.~67, no.~3, pp. 595--620, 2017.

\bibitem{bravo2018sharp}
M.~Bravo and R.~Cominetti, ``Sharp convergence rates for averaged nonexpansive
  maps,'' \emph{Israel Journal of Mathematics}, vol. 227, no.~1, pp. 163--188,
  2018.

\bibitem{bravo2018rates}
M.~Bravo, R.~Cominetti, and M.~Pavez-Sign{\'e}, ``{Rates of convergence for
  inexact Krasnosel'ski\u{\i}-Mann iterations in Banach spaces},''
  \emph{Mathematical Programming}, in press,
  https://doi.org/10.1007/s10107-018-1240-1, 2018.

\bibitem{shehu2018convergence}
Y.~Shehu, ``{Convergence rate analysis of inertial Krasnosel'ski\u{\i}-Mann
  type iteration with applications},'' \emph{Numerical Functional Analysis and
  Optimization}, vol.~39, no.~10, pp. 1077--1091, 2018.

\bibitem{mann1953mean}
W.~R. Mann, ``Mean value methods in iteration,'' \emph{Proceedings of the
  American Mathematical Society}, vol.~4, no.~3, pp. 506--510, 1953.

\bibitem{krasnosel1955two}
M.~A. Krasnosel'ski\u{\i}, ``Two comments on the method of successive
  approximations,'' \emph{Uspekhi Matematicheskikh Nauk}, vol.~10, pp.
  123--127, 1955.

\bibitem{rockafellar1976monotone}
R.~T. Rockafellar, ``Monotone operators and the proximal point algorithm,''
  \emph{SIAM Journal on Control and Optimization}, vol.~14, no.~5, pp.
  877--898, 1976.

\bibitem{passty1979ergodic}
G.~B. Passty, ``{Ergodic convergence to a zero of the sum of monotone operators
  in Hilbert space},'' \emph{Journal of Mathematical Analysis and
  Applications}, vol.~72, pp. 383--290, 1979.

\bibitem{paeceman1955numerical}
D.~Paeceman and H.~Rachford, ``The numerical solution of parabolic and elliptic
  equations,'' \emph{Journal of the Society for Industrial and Applied
  Mathematics}, vol.~3, no.~1, pp. 28--41, 1955.

\bibitem{douglas1956numerical}
J.~Douglas and H.~H. Rachford, ``On the numerical solution of heat conduction
  problems in two and three space variables,'' \emph{Transactions of the
  American Mathematical Society}, vol.~82, no.~2, pp. 421--439, 1956.

\bibitem{lions1979splitting}
P.~L. Lions and B.~Mercier, ``Splitting algorithms for the sum of two nonlinear
  operators,'' \emph{SIAM Journal on Numerical Analysis}, vol.~16, no.~6, pp.
  964--979, 1979.

\bibitem{gabay1975dual}
D.~Gabay and B.~Mercier, ``A dual algorithm for the solution of nonlinear
  variational problems via finite element approximation,'' \emph{Computers \&
  Mathematics with Applications}, vol.~2, pp. 17--40, 1976.

\bibitem{davis2017three}
D.~Davis and W.~Yin, ``A three-operator splitting scheme and its optimization
  applications,'' \emph{Set-Valued and Variational Analysis}, vol.~25, no.~4,
  pp. 829--858, 2017.

\bibitem{reich1979weak}
S.~Reich, ``{Weak convergence theorems for nonexpansive mappings in Banach
  spaces},'' \emph{Journal of Mathematical Analysis and Applications}, vol.~67,
  no.~2, pp. 274--276, 1979.

\bibitem{cegielski2015application}
A.~Cegielski, ``Application of quasi-nonexpansive operators to an iterative
  method for variational inequality,'' \emph{SIAM Journal on Optimization},
  vol.~25, no.~4, pp. 2165--2181, 2015.

\bibitem{xie2018distributed}
P.~Xie, K.~You, R.~Tempo, S.~Song, and C.~Wu, ``Distributed convex optimization
  with inequality constraints over time-varying unbalanced digraphs,''
  \emph{IEEE Transactions on Automatic Control}, vol.~63, no.~12, pp.
  4331--4337, 2018.

\bibitem{bauschke2015linear}
H.~H. Bauschke, D.~Noll, and H.~M. Phan, ``Linear and strong convergence of
  algorithms involving averaged nonexpansive operators,'' \emph{Journal of
  Mathematical Analysis and Applications}, vol. 421, no.~1, pp. 1--20, 2015.

\bibitem{dontchev2009implicit}
A.~L. Dontchev and R.~T. Rockafellar, \emph{Implicit Functions and Solution
  Mappings: A View from Variational Analysis}.\hskip 1em plus 0.5em minus
  0.4em\relax Springer, New York, 2009.

\bibitem{ortega1970iterative}
J.~M. Ortega and W.~C. Rheinboldt, \emph{Iterative Solution of Nonlinear
  Equations in Several Variables}.\hskip 1em plus 0.5em minus 0.4em\relax
  Academic Press, New York, 1970.

\bibitem{combettes2004solving}
P.~L. Combettes, ``Solving monotone inclusions via compositions of nonexpansive
  averaged operators,'' \emph{Optimization}, vol.~53, no. 5-6, pp. 475--504,
  2004.

\bibitem{combettes2015stochastic}
P.~L. Combettes and J.-C. Pesquet, ``{Stochastic quasi-Fej{\'e}r
  block-coordinate fixed point iterations with random sweeping},'' \emph{SIAM
  Journal on Optimization}, vol.~25, no.~2, pp. 1221--1248, 2015.

\bibitem{nedic2015distributed}
A.~Nedi{\'c} and A.~Olshevsky, ``Distributed optimization over time-varying
  directed graphs,'' \emph{IEEE Transactions on Automatic Control}, vol.~60,
  no.~3, pp. 601--615, 2015.

\bibitem{robbins1971martin}
H.~Robbins and D.~Siegmund, ``A convergence theorem for nonnegative almost
  supermartingales and some applications,'' in \emph{Proceedings of Optimizing
  Methods in Statistics}, Ohio, USA, 1971, pp. 233--257.

\end{thebibliography}

%%%%%%%%%%%%%%%%%%%%%%%%%%%%%%%%%%%%%%%%%%%%%%%%%%%%%%%%%%%%%%%%%%%%%%%%%%%%%%%%%%%%%%%%%

\end{document}